\numberwithin{equation}{section}
\newtheorem{thm}{Theorem}[section]
\newtheorem{lem}[thm]{Lemma}
\newtheorem{prop}[thm]{Proposition}
\theoremstyle{definition}
\newtheorem{rmk}[thm]{Remark}
\newcommand{\N}{\mathds{N}}
\newcommand{\Z}{\mathds{Z}}
\newcommand{\R}{\mathds{R}}
\newcommand{\E}{\mathbb{E}}
\newcommand{\C}{\mathds{C}}
\newcommand{\T}{\mathds{T}}
\newcommand{\diff}{\mathrm{d}}
\newcommand{\M}{\mathcal{M}}
\newcommand{\U}{\mathcal{U}}
\newcommand{\V}{\mathcal{V}}
\newcommand{\A}{\mathbb{A}}
\newcommand{\q}{{\mathbf{q}}}
\newcommand{\p}{\mathbf{p}}
\newcommand{\vv}{\mathbf{v}}
\newcommand{\x}{\mathbf{x}}
\newcommand{\w}{\mathbf{w}}
\newcommand{\HH}{\mathbb H}
\newcommand{\dt}{\frac{\diff }{\diff t}}
\newcommand{\ds}{\frac{\diff }{\diff s}}
\newcommand{\emb}{\mathrm{emb}}
\newcommand{\crit}{\mathrm{crit}}
\newcommand{\parall}{\mathrm{par}}
\newcommand{\ver}{{\mathrm{v}}}
\newcommand{\hor}{{\mathrm{h}}}
\newcommand{\hhh}{\mathbf{h}}
\newcommand{\std}{{\mathrm{std}}}
\begin{document}

\title[The Palais-Smale condition for the Hamiltonian action and applications]{The Palais-Smale condition for the Hamiltonian action on a mixed regularity space of loops in cotangent bundles and applications}


\author[L. Asselle]{Luca Asselle}
\address{Justus Liebig Universit\"at Giessen, Mathematisches Institut, Arndtstrasse 2, 35392 Giessen, Germany}
\email{luca.asselle@math.uni-giessen.de}

\author[M. Starostka]{Maciej Starostka}
\address{Gda\'nks Institute of Technology, Gabriela Narutowicza 11/12, 80233 Gda\'nsk, Poland}
\email{maciej.starostka@ruhr-uni-bochum.de}

\date{March, 2020}
\subjclass[2000]{37J45.}
\keywords{Hamiltonian action functional, Palais-Smale condition, Weinstein conjecture.}

\begin{abstract}
We  show that the Hamiltonian action satisfies the Palais-Smale condition over a ``mixed regularity'' space of loops in cotangent bundles,
namely the space of loops with regularity $H^s$, $s\in (\frac 12, 1)$, in the base and $H^{1-s}$ in the fiber direction. As an application, we give a simplified proof 
of a theorem of Hofer-Viterbo on the existence of closed characteristic leaves for certain contact type hypersufaces in cotangent bundles.
\end{abstract}

\maketitle

\vspace{-3mm}

\section{Introduction}

Let $(W,\omega)$ be a closed symplectic manifold, and let $H:\T\times W \to \R$ be a smooth time-depending Hamiltonian, where $\T:=\R/\Z$. With the pair $(H,\omega)$ we can associate 
an Hamiltonian vector field $X_H$ by
$$\imath_{X_H}\omega (\cdot) = -\diff H(\cdot),$$
and hence an induced Hamiltonian system by
\begin{equation}
\dot x = X_H(x).
\label{hamsystem}
\end{equation}
One of the central problem in the theory of Hamiltonian systems is to find (one-)periodic solutions of \eqref{hamsystem}. Such periodic solutions can be found as 
critical points of a suitable action functional: the \textit{Hamiltonian action} of a contractible loop $x:\T\to W$ is given by 
\begin{equation}
\A_H(x) := \int_{\mathbb D} \bar x^*\omega - \int_\T H(t,x(t))\, \diff t,
\label{hamactiongeneral}
\end{equation}
where $\bar x:\mathbb D\to W$ is a map on the disk $\mathbb D$ coinciding with $x$ on $\partial \mathbb D\cong \T$. For an arbitrary $(W,\omega)$, the functional $\A_H$ 
is unfortunately not well-suited for finding critical points using classical Morse theory, and this has forced to develop new techniques to deal with the functional $\A_H$. One of the most powerful 
is certainly \textit{Floer theory}: The Floer homology $FH_*(W,\omega)$ of $(W,\omega)$ is the 
homology of a chain complex which is generated by contractible one-periodic solutions of \eqref{hamsystem}. The boundary operator is defined by a suitable count of ``negative $L^2$-gradient flow lines'' 
of $\A_H$; these are cylinders $u:\R\times \T\to W$ which are asymptotic to pairs of periodic orbits of $X_H$ and solve the nonlinear perturbed Cauchy-Riemann equation 
\begin{equation}
\partial_s u + J_t(u) (\partial_t u - X_H(t,u))=0,
\label{cauchyriemann}
\end{equation}
where $(J_t)$ is a given loop of $\omega$-compatible almost complex structures on $W$. As the notation suggests, $FH_*(W,\omega)$ does not depend on the defining data $H$ and $J$, and it is 
actually isomorphic to the singular homology of $M$ with respect to suitable coefficient rings. This approach to the study of periodic orbits on general symplectic manifolds was introduced by Floer in 
the late 80's \cite{Floer:1988oq,Floer:1989xr,Floer:1989ud} under additional assumptions, and later extended more and more by several authors, see e.g. \cite{HS:95,LT:98,FO:99}. 
Floer homology can be defined also for non-compact symplectic manifolds which are suitably convex at infinity. In this case, the theory requires the use of Hamiltonians having a suitable behavior at infinity 
and is a genuine infinite dimensional homology theory: for instance, the Floer homology of $T^*M$, the total space of the cotangent bundle of a closed manifold $M$, is isomorphic to the singular 
homology of the free loop space of $M$, see \cite{Abbondandolo:2006jf,AS:14,Salamon:2006mh}.

On particular symplectic manifolds however, a Morse theory for the Hamiltonian action functional $\A_H$ can be obtained by more classical methods. This is the case of the torus $\T^{2n}$, for which 
$\A_H$ admits a smooth negative gradient flow on the space of contractible loops of Sobolev class $H^{1/2}$. The space of loops of class $H^{1/2}$ in an arbitrary manifold does not have a good structure of an 
infinite dimensional manifold due to the fact that curves of class $H^{1/2}$ might have discontinuities, but since $\T^{2n}$ is a quotient of $\R^{2n}$, the space of contractible $H^{1/2}$-loops on $\T^{2n}$ 
can be identified with $\T^{2n}$ times the Hilbert space of $H^{1/2}$-loops in $\R^{2n}$ having zero mean. Although strongly indefinite (meaning that all its critical points have infinite Morse index and co-index),
the functional $\A_H$ has good analytical properties on this space. By using finite dimensional approximations, the $H^{1/2}$-approach was used by Conley and Zehnder \cite{Conley:1984xb} to prove Arnold's conjecture on 
$\T^{2n}$ five years before the birth of Floer homology; see also \cite{Starostka:2019} for a simplified proof. Another symplectic manifold which can be dealt with by similar methods is $\C\mathbb P^n$, see \cite{For:85}.

\vspace{2mm}

In this and a follow up paper we aim at enlarging the class of symplectic manifolds such that the action functional $\A_H$ given by \eqref{hamactiongeneral} induces a negative gradient flow with good compactness properties on a suitable space of free loops. In the present paper we will focus on the class of symplectic manifolds given by cotangent bundles $T^*M$ over a closed manifold $M$: $T^*M$ carries a natural symplectic form $\omega_\std$, which in local coordinates $(q,p)=(q_1,p_1,...,q_n,p_n)$ is given by $\omega_\std = \diff q \wedge \diff p$. 
In this setting, the functional $\A_H$ reads
$$\A_H(x) = \int_\T x^*\lambda_\std - \int_\T H(t,x(t))\, \diff t,$$
where $\lambda_\std = p\diff q$ is the \textit{Liouville one-form}. As domain of definition of $\A_H$ we will take the bundle $\M^{1-s}$ over the Hilbert manifold of loops $H^s(\T,M)$, $s\in (\frac12,1)$ whose typical fibre is given by the space of $H^{1-s}$-vector fields along $\gamma\in C^\infty(\T,M)$; for more details we refer to Section \ref{section:2}.

Other classes of manifolds that we aim at studying are given by twisted cotangent bundles and toric manifolds respectively. In the latter case, 
the isotropic foliation given by the torus action will play the role of the fibers of $T^*M$. We will address these question in a forthcoming paper. 

We recall that a $C^1$-functional $f:\mathcal H\to \R$, $\mathcal H$ Hilbert manifold, satisfies the Palais-Smale condition if every sequence $(\gamma_n)\subset \mathcal H$ 
such that 
$$f(\gamma_n) \to c, \quad \diff f (\gamma_n) \to 0,$$
admits a converging subsequence. 

\begin{thm}
Let $M$ be a closed manifold, and let $\pi:T^*M\to M$ be its cotangent bundle. Furthermore, let $H:\T\times T^*M\to \R$ be a smooth time-depending Hamiltonian function satisfying the growth condition 
$$H(t,q,p) = \frac 12 |p|_q^2 + c, \quad \forall (q,p)\in T^*M\setminus K, \ \forall t\in \T,$$
where $K\subset T^*M$ is a compact subset, $|\cdot|$ is the norm induced by a Riemannian metric on $M$ and $c\in \R$ is some constant. Then, for every $s\in (\frac 12 ,1)$,
 $\A_H:\mathcal M^{1-s}\to \R$ satisfies the Palais-Smale condition.
\label{thm:1}
\end{thm}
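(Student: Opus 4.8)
The plan is to exploit the "mixed regularity" splitting of the loop space in a way that mirrors the Legendre transform relating $\A_H$ to the Lagrangian action. Recall that a point of $\M^{1-s}$ is a pair $(q,p)$ with $q\in H^s(\T,M)$ and $p$ an $H^{1-s}$-section of $q^*T^*M$; since $s+(1-s)=1$, the product $p\,\dot q$ of the (distributional) derivative $\dot q\in H^{s-1}$ with $p\in H^{1-s}$ makes sense as a pairing $H^{1-s}\times H^{s-1}\to\R$, and the same bilinear structure controls all the quadratic terms appearing in the functional and its differential. So I would begin by writing down, in a local trivialization along a reference loop $\gamma\in C^\infty(\T,M)$, explicit formulas for $\A_H$, for $\diff\A_H$, and for the $L^2$-gradient expressed in the $H^s\oplus H^{1-s}$ inner product, isolating the "principal part" (a bounded self-adjoint Fredholm-type operator built from $\partial_t$ and the symplectic structure, which is invertible modulo the finite-dimensional cotangent fiber directions) from lower-order terms coming from $H$ and from the connection/Christoffel corrections.

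Next I would take a Palais-Smale sequence $x_n=(q_n,p_n)$ with $\A_H(x_n)\to c$ and $\diff\A_H(x_n)\to 0$, and extract compactness in two stages. First, \textbf{a priori bounds}: using the growth hypothesis $H=\frac12|p|_q^2+c$ outside a compact set $K$, the standard Lagrangian-action estimate shows that $\A_H(x_n)$ together with the almost-criticality condition controls $\|p_n\|_{L^2}$ and hence (via $\dot q_n = \partial_p H(t,q_n,p_n)$ up to a small error, which is the content of $\diff\A_H(x_n)\to 0$ tested against fiber variations) controls $\|q_n\|_{H^1}$, so in particular $q_n$ stays in a compact region of $M$ and is bounded in $H^1\subset H^s$. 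Feeding this back into the equation $\dot p_n = -\partial_q H(t,q_n,p_n)$ (again up to small error) bounds $p_n$ in $H^1\subset H^{1-s}$. This is exactly the classical bootstrap for the Hamiltonian/Lagrangian action, and the role of the compactness of $M$ plus the quadratic-at-infinity condition is to make it go through globally. Second, \textbf{passing to the limit}: the $H^1$-bounds give, along a subsequence, weak $H^1$ and strong $C^0$ convergence of $(q_n,p_n)$ to some $x=(q,p)$; then one writes $\diff\A_H(x_n)=L(x_n)+N(x_n)$ with $L$ the invertible-modulo-finite-rank principal operator and $N$ a nonlinear term that is \emph{compact} in the relevant topology (because it only involves $x_n$ through $H$ and its derivatives evaluated at the $C^0$-convergent loops, and through the finite-dimensional fiber/zero-mode projections), and inverts $L$ to conclude $x_n\to x$ strongly in $H^s\oplus H^{1-s}$.

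The main obstacle, and the reason the mixed regularity space is the right setting, is making precise the claim that the nonlinear remainder $N$ is compact from $H^s\oplus H^{1-s}$ to its dual while the principal operator $L$ is Fredholm (indeed an isomorphism off a finite-dimensional subspace): one must check that multiplication by $H^s$-functions and the Nemytskii operators built from $\partial_q H,\partial_p H,\partial^2 H$ map the mixed-regularity space compactly into the appropriate negative-order spaces, which hinges on the Sobolev multiplication and embedding estimates for the pair of exponents $(s,1-s)$ with $s\in(\tfrac12,1)$ — this is where the restriction $s>\tfrac12$ is essential, since it is exactly the threshold making $H^s(\T,M)$ a manifold of continuous loops and making the product pairing bounded. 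A secondary technical point is handling the connection terms and the fact that $\M^{1-s}$ is a bundle rather than a linear space: one works in local trivializations around the $C^0$-limit loop $q$, where all patching maps are smooth, so the bundle structure contributes only lower-order (hence compact) corrections and does not affect the Fredholm alternative argument. Once these multiplication/compactness lemmas are in place — presumably established in Section \ref{section:2} — the convergence argument is the usual one for strongly indefinite functionals of the form "invertible linear part plus compact perturbation".
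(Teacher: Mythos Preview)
Your two-stage strategy---a priori bounds, then strong convergence via ``invertible linear part plus compact perturbation''---is the same architecture as the paper's, and your second stage is essentially correct and close to what the paper does. The gap is in the first stage.

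You assert that the action bound together with almost-criticality controls $\|p_n\|_{L^2}$, and then, via $\dot q_n\approx\partial_pH=p_n$ from testing against fiber variations, controls $\|q_n\|_{H^1}$. But the Palais-Smale hypothesis is in the $H^s\oplus H^{1-s}$ topology, so $\diff\A_H(x_n)\to 0$ only in its dual. Testing against the vertical vector $(0,p_n)$ therefore costs $\|p_n\|_{1-s}$, not $\|p_n\|_{L^2}$, and the resulting inequality is
\[
\tfrac12\|p_n\|_{L^2}^2 \ \le\ C\bigl(1+\|p_n\|_{1-s}\bigr),
\]
which is \emph{not} an $L^2$ bound, since $\|\cdot\|_{1-s}\ge\|\cdot\|_{L^2}$. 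Likewise, testing against arbitrary vertical $v\in H^{1-s}$ yields only $\|\dot q_n-p_n\|_{s-1}\le C$ (a \emph{negative}-order bound), and testing against horizontal $h\in H^s$ yields only $\|\nabla_{\dot q_n}p_n\|_{-s}\le C$. None of this produces the $H^1$ control you claim; your bootstrap to $H^1$ is circular.

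The paper closes this loop with an ingredient your outline does not contain: split $p_n=p_n^{\parall}+\tilde p_n$ according to the finite-dimensional kernel of $\nabla_{\dot q_n}^*\nabla_{\dot q_n}$ (parallel vector fields along $q_n$) and its $L^2$-complement. On the complement one has the exact spectral identity $\|\nabla_{\dot q_n}p_n\|_{-s}^2=\|\tilde p_n\|_{1-s}^2-\|\tilde p_n\|_{L^2}^2$, so the $H^{-s}$ bound on $\nabla p_n$ combined with the displayed inequality gives $\|\tilde p_n\|_{1-s}^2\le C(1+\|p_n^{\parall}\|+\|\tilde p_n\|_{1-s})$, hence $\|\tilde p_n\|_{1-s}\le C(1+\|p_n^{\parall}\|^{1/2})$; feeding this back into the displayed inequality closes first on the finite-dimensional piece $\|p_n^{\parall}\|$ and then on $\|p_n\|_{1-s}$. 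This interplay of the $L^2$, $H^{1-s}$, and $H^{-s}$ norms through the spectral decomposition of $\nabla^*\nabla$ is the heart of the boundedness argument, and it is what your sketch is missing.
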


The Palais-Smale condition is, as the natural replacement of compactness, a key property in infinite-dimensional critical point theory, and, as such, it is the starting point 
to obtain a ``classical'' Morse theory for the Hamiltonian action functional $\A_H$. Indeed, once one has a negative gradient
flow with good analytical properties for a strongly indefinite functional, one can obtain a Morse theory e.g. using the Morse complex approach which is developed in 
\cite{AM:01,AM:03,AM:05,AM:09}. In this approach, one constructs a chain complex looking at one-dimensional intersections 
of unstable and stable manifolds of pairs of critical points. The difference with respect to Floer homology is that the Cauchy-Riemann equation \eqref{cauchyriemann} is replaced by an ODE in an 
infinite dimensional manifold. We will address this problem in a forthcoming paper. 

In this paper, we will apply Theorem \ref{thm:1} to give a simplified proof of a Theorem of Hofer and Viterbo \cite{Hofer:1988} on the existence of closed characteristic leaves for certain contact type hypersurfaces in $T^*M$. To this purpose, we recall that solutions of \eqref{hamsystem} for an autonomous (that is, time independent) Hamiltonian function 
$H:T^*M\to \R$ are contained in a level set of $H$; indeed, for any solution $x:I\to T^*M$ of \eqref{hamsystem} we have 
$$\frac{\diff}{\diff t} H\circ x(t) = \diff H(x(t))[\dot x(t)] = -\omega_\std (X_H(x(t)),\dot x(t)) = -\omega_\std(\dot x(t),\dot x(t))=0.$$

We set $\Sigma:=H^{-1}(\kappa)$, $\kappa \in\R$, and suppose that $\Sigma$ is compact, connected, 
and \textit{regular}, that is, $X_H$ is nowhere vanishing on $\Sigma$. As it is well-known, the 
Hamiltonian dynamics on $\Sigma$ essentially depends only on $\Sigma$, meaning that the dynamics of two different Hamiltonians both defining 
$\Sigma$ only differ by time-reparametrization: The symplectic form $\omega_\std$ induces a line distribution on $\Sigma$ via 
$$\ell_\Sigma := \ker \, \omega_\std |_{T^*\Sigma},$$
and $X_H |_\Sigma \in \ell_\Sigma$.  The line distribution $\ell_\Sigma \to \Sigma$ is usually called the \textit{characteristic line bundle} over $\Sigma$ and induces a foliation 
of $\Sigma$ (whose leaves are unparametrized Hamiltonian trajectories), called the \textit{characteristic foliation} of $\Sigma$. In particular, finding periodic solutions to \eqref{hamsystem} with energy $\kappa$ is equivalent to finding closed characteristic leaves on $\Sigma=H^{-1}(\kappa)$. 
In what follows we say that an hypersurface $\Sigma\subset T^*M$ is $\mathbb O_M$-\textit{separating} if the bounded component of $T^*M\setminus \Sigma$ contains the zero-section $\mathbb O_M$ of the bundle $T^*M\to M$.

\begin{thm}
Let $\Sigma \subset T^*M$ be a compact connected $\mathbb O_M$-separating contact type hypersurface. Then there exists a closed characteristic leaf on $\Sigma$.
\label{thm:2}
\end{thm}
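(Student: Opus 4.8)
The plan is to prove Theorem~\ref{thm:2} variationally, by applying Theorem~\ref{thm:1} to a Hamiltonian tailored to $\Sigma$ and detecting a closed characteristic as a nonconstant one-periodic orbit through a minimax argument. First I would exploit the contact type hypothesis to build a collar: there is, on a neighbourhood of $\Sigma$, a Liouville vector field $Y$ (i.e.\ $\mathcal L_Y\omega_{\std}=\omega_{\std}$) transverse to $\Sigma$ and pointing out of the bounded component $U$ of $T^*M\setminus\Sigma$, and its flow $\phi^r_Y$ yields an embedding $\Psi\colon(-2\delta,2\delta)\times\Sigma\hookrightarrow T^*M$, $\Psi(r,\cdot)=\phi^r_Y$, onto a collar $N$ of $\Sigma$. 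Writing $\Sigma_r:=\Psi(\{r\}\times\Sigma)$ one has $\Sigma_0=\Sigma$, while $\Sigma_r\subset U$ for $r<0$ and $\Sigma_r$ lies on the unbounded side of $\Sigma$ for $r>0$; moreover $\lambda:=\imath_Y\omega_{\std}$ is a primitive of $\omega_{\std}$ on $N$ with $\Psi^*\lambda=e^r\alpha$, where $\alpha:=\lambda|_\Sigma$ is a contact form, and the characteristic foliations of all the $\Sigma_r$ are conjugate to the Reeb foliation of $(\Sigma,\alpha)$. Hence it suffices to produce a closed characteristic on a single $\Sigma_r$. Shrinking $\delta$ I may assume $N\cap\mathbb{O}_M=\emptyset$ and, since $\overline U$ is bounded, fix $\rho>0$ with $\overline U\cup N\subset\{|p|_q<\rho\}$.

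Next I would construct an autonomous $H\colon T^*M\to\R$ with the following properties: (i) $H\equiv\frac12|p|_q^2+c$ for $|p|_q\ge\rho$, with $c>0$ large and, if convenient, the metric rescaled, so that Theorem~\ref{thm:1} applies to $\A_H$ on $\M^{1-s}$; (ii) $H=h(r)$ on $N$ with $h$ strictly increasing and $h(0)=0$, so that each $\Sigma_r$ is a regular level set and, since a direct computation gives $X_H=h'(r)e^{-r}R_\alpha$ on $N$ (with $R_\alpha$ the Reeb field of $\alpha$), every nonconstant one-periodic orbit of $X_H$ contained in $N$ is a reparametrised closed Reeb orbit, hence a closed characteristic of $\Sigma$; (iii) $H<0$ on $\overline U\setminus N$ and $H\equiv-m$ near $\mathbb{O}_M$ for a small $m>0$; (iv) on the two transition shells, between $\{H=-m\}$ and $N$ and between $N$ and $\{|p|_q\ge\rho\}$, $H$ is a steep monotone function of a fixed defining function, with $\diff H\neq0$, chosen so that $X_H$ has no nonconstant one-periodic orbit of action below a prescribed large threshold there. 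With such an $H$, the one-periodic orbits of $X_H$ to be reckoned with, besides the genuine closed characteristics sitting inside $N$, are only the constant ones --- whose actions $-H(\,\cdot\,)$ form a finite set, as $\crit H$ is the zero section together with finitely many critical points in $\overline U\setminus N$ --- and the reparametrised closed geodesics contained in $\{|p|_q\ge\rho\}$, whose actions form a discrete set (e.g.\ for a bumpy metric, which we are free to choose).

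I would then run a minimax scheme for $\A_H$ on $\M^{1-s}$. The functional is strongly indefinite, but it possesses the standard linking geometry in this setting: the compact family of constant loops on $\mathbb{O}_M$, on which $\A_H\equiv m>0$, links a large sphere in a subspace of negative Fourier fibre-modes along which $\A_H\to-\infty$. Carrying this out via a finite-dimensional (Galerkin) reduction in the spirit of Conley and Zehnder \cite{Conley:1984xb}, and with the usual care for the $S^1$-symmetry of the autonomous functional, one obtains a minimax value $c^\ast=c^\ast(H)$ with $0<m\le c^\ast<\infty$; by Theorem~\ref{thm:1} the Palais-Smale condition holds, so $c^\ast$ is a critical value, attained at a one-periodic orbit $x$ of $X_H$ with $\A_H(x)=c^\ast$. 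To force $x$ to be a closed characteristic I would let $H$ vary in a monotone family $H_\tau$, $\tau\to\infty$, obtained by steepening $h$ (and enlarging $c$, rescaling the metric if needed), so that $c^\ast(H_\tau)$ moves monotonically and sweeps through a gap missing both the finite set of constant-orbit actions and the discrete set of geodesic actions; for such a $\tau$ the orbit $x$ must be nonconstant and contained in $N$, hence a closed characteristic of some $\Sigma_r$, and conjugacy of the characteristic foliations then gives one on $\Sigma$ itself.

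The step I expect to be the main obstacle is this last placement argument together with property (iv): one has to engineer $H$ on the transition shells, and the family $H_\tau$, so that no spurious short-period orbit can live in the transition region and so that $c^\ast(H_\tau)$ genuinely escapes the (possibly growing) union of constant-orbit and geodesic action values, all the while keeping the linking geometry intact and $c^\ast<\infty$. This ``squeezing'' bookkeeping, familiar from Hofer-Zehnder type almost-existence arguments, is where the real work lies; Theorem~\ref{thm:1} contributes precisely the compactness --- the Palais-Smale condition --- that upgrades the minimax value to an honest critical value and makes the whole scheme run.
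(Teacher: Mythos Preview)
Your strategy is essentially the one the paper follows: build a thickening from the Liouville vector field, cook up a one-parameter family of Hamiltonians quadratic at infinity so that Theorem~\ref{thm:1} applies, run a minimax on $\M^{1-s}$, and use a bumpy metric together with monotonicity in the parameter to force the resulting critical point off the constant and closed-geodesic branches and into the collar. The paper even organises this as you anticipate: it first proves the parameter-free ``nearby existence'' statement (Theorem~\ref{thm:3}) for an arbitrary thickening, and then deduces Theorem~\ref{thm:2} from the conjugacy of characteristic foliations under the Liouville flow --- exactly the reduction you describe in your first paragraph.

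Where your sketch diverges from the paper is in the minimax step itself. You invoke ``the standard linking geometry'' of constant loops against a sphere of negative Fourier modes and propose a Galerkin/Conley--Zehnder reduction. On the nonlinear bundle $\M^{1-s}\to H^s(S^1,M)$ there is no linear splitting into positive and negative Fourier subspaces, so this linking picture does not transplant directly; moreover, appealing to finite-dimensional approximation is precisely what Theorem~\ref{thm:1} is designed to avoid. The paper instead proves an \emph{intersection proposition} (Proposition~\ref{intersectionproposition}): for a suitably chosen compact $C\subset H^1(S^1,M)$ (a non-contractible loop if $\pi_1(M)\neq 0$, or a set carrying high-degree free loop space cohomology via Sullivan's minimal models if $M$ is simply connected), the truncated negative gradient flow cannot displace the fibre $\pi^{-1}(C)$ from the zero section $\mathbb O_{H^s}$. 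This is established via a representation lemma for the flow and Dold's fixed point transfer, yielding an injective transfer homomorphism in cohomology. The minimax value is then $\theta(r)=\inf_{t\ge 0}\sup_{\phi^t_r(\pi^{-1}(C))}\A_r$, and the cohomological input rules out constant critical points in the simply connected case. So your ``main obstacle'' is correctly identified, but the paper's resolution of it is more topological (Dold transfer plus loop-space cohomology) than the linear linking you sketch.
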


The hypersurface $\Sigma\subset (T^*M,\omega_\std)$ is called of \textit{contact type}, if there exists a one-form $\alpha\in \Omega^1(\Sigma)$ such that $\omega_\std |_\Sigma= \diff \alpha$ 
and $\alpha$ does not vanish on $\ell_\Sigma$, or, equivalently, if there exists a Liouville vector field $Y$ on a neighborhood $U$ of $\Sigma$ 
(meaning that $L_Y \omega_\std=\omega_\std$ on $U$, where $L$ denotes the Lie derivative) which is everywhere transverse to $\Sigma$ (c.f. \cite[Section 4.3]{Hofer:1994bq}).
In contact geometry, one of the most famous open conjecture - universally known as the \textit{Weinstein conjecture} -
states that every closed contact manifold possesses a closed Reeb orbit (in our language, a closed 
charateristic leave). Such a conjecture was originally formulated by Weinstein in the late 1970's 
\cite{Weinstein:1979} under the additional assumption that the cohomology do not vanish in degree one, and has received since then great attention. Nowadays, 
the conjecture is known to be true in dimension 3 \cite{Taubes:2007}; in higher dimension, the conjecture is proved only in special cases.
Theorem \label{thm:2} above can therefore be seen as a confirmation of the Weinstein conjecture for certain contact type hypersurfaces in cotangent bundles. 
To our best knowledge, the full Weinstein conjecture in cotangent bundles seems not to be known. In contrast, it is known to hold
for compact contact type hypersurfaces in twisted cotangent bundles $(T^*M,\omega_\std - \pi^*\sigma)$, provided
the closed two-form $\sigma$ does not vanish on $\pi_2(M)$; see \cite{Merry:2020}.

\vspace{2mm}

Theorem \ref{thm:2} will be an immediate consequence of a nearby/dense existence theorem of closed leaves for $\mathbb O_M$-separating hypersurfaces  which are not 
necessarily of contact type. Roughly speaking, if the contact condition is dropped, then one cannot expect the 
existence of closed characteristic leaves on $\Sigma$, as many explicit examples show (see e.g. \cite{Ginzburg:1996,Ginzburg:1997,Ginzburg:2003}).
However, one might hope to find closed characteristic leaves on hypersurfaces which are arbitrarily close to $\Sigma$.
To set the notation we define, following a suggestion of Kai Zehmisch, a \textit{thickening} of $\Sigma$ to be a
diffeomorphism $\Psi : (-a,a)\times \Sigma \to T^*M$, $a\in \R\cup \{+\infty\}$, onto an open precompact neighborhood $U\subset T^*M$ of $\Sigma$ such that 
$\Psi(0,\cdot) = \imath_\Sigma:\Sigma \to T^*M$ canonical inclusion. For every $\sigma \in (-a,a)$, we set $\Sigma_\sigma := \Psi (\{\sigma\}\times \Sigma)$, and denote with $\mathcal P(\sigma)$ the set of closed characteristic leaves contained in $\Sigma_\sigma$.
Notice that, if $\Sigma$ is regular and $\mathbb O_M$-separating, then up to shrinking the interval $(-a,a)$ 
we can assume that each $\Sigma_\sigma$ is regular and $\mathbb O_M$-separating. 
Also, every thickening can be realized as the flow of some vector field on $T^*M$ which is transverse 
to $\Sigma$. 

\begin{thm}
Let $\Sigma\subset T^*M$ be a compact, connected, $\mathbb O_M$-separating hypersurface, and let $\Psi$ be a thickening of $\Sigma$. Then there exists a sequence 
$\sigma_n\to 0$ such that $\mathcal P(\sigma_n)\neq \emptyset$ for all $n\in \N$. Moreover, we can find a constant $\alpha=\alpha(\Psi)>0$ such that for every $n\in\N$ there
exists $P_n\in \mathcal P(\sigma_n)$ with 
$$0<\Big |\int_{P_n} \lambda_\std \Big |<\alpha.$$
\label{thm:3}
\end{thm}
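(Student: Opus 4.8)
The plan is to realize closed characteristic leaves on the hypersurfaces $\Sigma_\sigma$ as critical points of the Hamiltonian action $\A_{H_\sigma}$ for a suitable family of Hamiltonians $H_\sigma$, and then to run a minimax argument on the mixed-regularity space $\M^{1-s}$, exploiting the Palais-Smale condition granted by Theorem~\ref{thm:1}. First I would use the thickening $\Psi$ to construct, for a cofinal set of parameters $\sigma$, an autonomous Hamiltonian $H_\sigma:T^*M\to\R$ which equals $\tfrac12|p|_q^2+c$ outside a compact set (so that Theorem~\ref{thm:1} applies), which has $\Sigma_\sigma$ as a regular level set $H_\sigma^{-1}(\kappa_\sigma)$, and which is chosen so that on the bounded component of $T^*M\setminus\Sigma_\sigma$ (containing $\OO_M$) the function is small/constant and on the unbounded side it grows. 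Since the dynamics on a regular level set depends only on the hypersurface up to time reparametrization, one-periodic orbits of $X_{H_\sigma}$ on $\Sigma_\sigma$ correspond exactly to (suitably parametrized) closed characteristic leaves $P\subset\Sigma_\sigma$, and the value $\int_P\lambda_\std$ is controlled by $\A_{H_\sigma}$ evaluated at the corresponding loop.

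Next I would set up the minimax. On $\M^{1-s}$ the functional $\A_{H_\sigma}$ is $C^1$, strongly indefinite, and satisfies (PS) by Theorem~\ref{thm:1}; I would use the linking/saddle-point structure coming from the splitting of the loop space into the $\pm$ and $0$ eigenspaces of the underlying quadratic form (the $\OO_M$-separating hypothesis is what produces a nontrivial linking, much as in the Hofer–Zehnder capacity construction), to define a minimax value
\begin{equation}
c(\sigma) := \inf_{A\in\Gamma}\ \sup_{x\in A}\ \A_{H_\sigma}(x),
\label{minimax3}
\end{equation}
over an appropriate class $\Gamma$ of admissible sets stable under the negative gradient flow. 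Palais-Smale then gives a critical point at level $c(\sigma)$; one checks $c(\sigma)>0$ by a uniform linking estimate (so the critical point is not a constant loop sitting at a critical point of $H_\sigma$ on $\OO_M$), and $c(\sigma)$ stays bounded above by a constant depending only on $\Psi$ by choosing the family $H_\sigma$ with uniformly controlled oscillation. Translating back, this produces $P_\sigma\in\P(\sigma)$ with $0<|\int_{P_\sigma}\lambda_\std|<\alpha$ for a fixed $\alpha=\alpha(\Psi)$.

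To get the sequence $\sigma_n\to0$ rather than a single value, I would invoke the standard Struwe-type monotonicity trick: embed $\Sigma$ into the one-parameter family $\{\Sigma_\sigma\}$, observe that $\sigma\mapsto c(\sigma)$ is monotone (after arranging the $H_\sigma$ to be monotone in $\sigma$), hence differentiable for almost every $\sigma$; at each point of differentiability the sup in \eqref{minimax3} is ``almost attained'' on loops with a priori bounded derivative of the action, which upgrades the (PS) sequence to a genuine critical point lying on $\Sigma_\sigma$ itself (not merely on a nearby level of $H_\sigma$). Since points of differentiability accumulate at $0$, we obtain $\sigma_n\to0$ with $\P(\sigma_n)\neq\emptyset$, and the uniform bound from the previous paragraph gives the common constant $\alpha$. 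The main obstacle I expect is the linking/compactness interface: one must show that the minimax class $\Gamma$ is nonempty and invariant under the negative gradient \emph{flow} on the Hilbert bundle $\M^{1-s}$ — which requires knowing that this flow exists, is complete enough on sublevel sets, and deforms admissible sets properly — and that the linking value $c(\sigma)$ is genuinely positive and finite uniformly in $\sigma$; the (PS) condition of Theorem~\ref{thm:1} is exactly what makes the deformation lemma work, but the verification that the quadratic part of $\A_{H_\sigma}$ on the mixed-regularity space has the Fredholm/splitting properties needed for a clean linking theorem is the delicate technical point.
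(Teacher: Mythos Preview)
Your outline diverges from the paper's argument in two substantive ways, and at least one of them is a genuine gap.

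\textbf{The parameter and the closed-geodesic problem.} You parametrize the Hamiltonians by the hypersurface label $\sigma$, with each $H_\sigma$ having $\Sigma_\sigma$ as a single regular level and equal to $\tfrac12|p|^2+c$ at infinity. But then every closed geodesic of $(M,g)$ is also a nonconstant critical point of $\A_{H_\sigma}$ with positive action, and nothing in your scheme excludes it: the Struwe monotonicity trick produces bounded Palais--Smale sequences, not orbits on a prescribed level set, and since the ``outside'' part of $H_\sigma$ is independent of $\sigma$, closed geodesics sit at a $\sigma$-independent action level and survive differentiation in $\sigma$. The paper resolves this by parametrizing instead by a height parameter $r$ (the plateau value of $H_r$ on the unbounded side), so that a closed geodesic has action $\E(\q)-r$; choosing a \emph{bumpy} metric makes the energy spectrum discrete, and continuity plus monotonicity of the minimax value $\theta(r)$ then forces, for some $r$ in a bounded range, the minimax critical point to land on some $\Sigma_\sigma$ in the collar rather than on a geodesic sphere. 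This mechanism is entirely absent from your proposal and I do not see how to repair it within your $\sigma$-parametrization.

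\textbf{The linking mechanism.} You invoke a splitting of the loop space into $\pm/0$ eigenspaces of the quadratic part. On $\M^{1-s}$, which is a nonlinear Hilbert bundle over $H^s(\T,M)$, no such global linear splitting is available, so the Hofer--Zehnder style linking you have in mind does not transplant directly. The paper replaces linking by an \emph{intersection proposition}: the (truncated) negative gradient flow of $\A_r$ never displaces $\pi^{-1}(C)$ from the zero-section $\mathbb O_{H^s}$, proved via a representation lemma for the flow (compact perturbation of an explicit fibrewise operator) together with Dold's fixed-point transfer in Alexander--Spanier cohomology. This is what forces $\theta(r)\geq 0$ and, in the simply-connected case, what rules out constant critical points by a cohomological degree argument. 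You flag the splitting issue as ``the delicate technical point'', which is fair, but the actual substitute is qualitatively different from what you sketch.

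In short: your use of Theorem~\ref{thm:1} for (PS) is correct and is indeed the point of the paper, but the two ingredients above --- the $r$-parametrization with the bumpy-metric/discreteness argument to kill geodesics, and the Dold-transfer intersection proposition in place of linear linking --- are the heart of the proof and are missing or misidentified in your plan.
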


\vspace{-3mm}

Our proof of Theorem \ref{thm:3} follows closely the original argument of Hofer-Viterbo, nevertheless the new functional setting will enable us to strongly 
simplify the argument in its key technical parts. Indeed, Hofer-Viterbo's setting corresponds in the notation above to the case $s=1$, and it is well-known that in this 
case the Hamiltonian action $\A_H$ does not satisfy the Palais-Smale condition, because of the lack of compactness in the Hamiltonian part of the functional. 
Therefore, one has to introduce approximations of $\A_H$ to achieve compactness, and 
then pass to the limit for the approximations going to zero using a very delicate diagonal argument. 
In our case instead we can work directly with the functional $\A_H$, see Section \ref{section:3}. 

\vspace{2mm}

\textbf{Structure of the paper.} In Section 2, we introduce the necessary background on the Hamiltonian action $\A_H$ and on the
functional setting, and prove Theorem \ref{thm:1}. In Section 3, we show how Theorems 1.2 and \ref{thm:3} follow from an existence theorem 
of critical points for $\A_H$, which will be then proved in Section 4.

\vspace{2mm}

\textbf{Acknowledgments.} The authors warmly thank Alberto Abbondandolo, Thomas Bartsch, Marek Izydorek, and Kai Zehmisch for many fruitful discussions.
Starting point for this paper were lectures given by the first named author at the Justus-Liebig Universit\"at Gie\ss en, Germany, and at the Politechnika Gdanska, Poland,
on the classical paper by Hofer and Viterbo \cite{Hofer:1988}. L.A. warmly thanks Marek Izydorek and Joanna Janczewska 
for their kind hospitality. This research is supported by the DFG-project ``Morse theoretical methods in Hamiltonian dynamics''.
L.A. is partially supported
by the DFG-grant CRC/TRR 191 ``Symplectic structures in Geometry, Algebra and Dynamics".
M.S. is partially supported by the Beethoven2-grant  2016/23/G/ST1/04081 of the National Science Centre, Poland.


\section{The Hamiltonian action functional}
\label{section:2}

In this section, we introduce the functional setting for the Hamiltonian action $\A_H$ in \eqref{hamactiongeneral} on the cotangent bundle $T^*M$ of 
a closed manifold $M$ and prove Theorem \ref{thm:1}. 
We start recalling some well-known facts about Riemannian metrics on $M$ which will be useful later on.


\subsection{Bumpy metrics} 
A Riemannian metric $g$ yields a flow on $TM$ (the \textit{geodesic flow}) by 
$$TM \ni (q,v)\mapsto (\gamma(t),\dot \gamma(t)), \quad \forall t\in \R,$$
where $\gamma:\R\to M$ is the unique curve satisfying 
$$\nabla_{\dot \gamma} \dot \gamma =0, \quad \text{and}\ \gamma(0)=q, \ \dot \gamma(0)=v.$$
Here, $\nabla_{\dot \gamma}$ denotes the covariant derivative along $\gamma$ associated with the Levi-Civita connection.
The curve $\gamma$ is called the \textit{geodesic} through the point $q$ with initial velocity $v$. It is well-known that periodic orbits of the 
geodesic flow are in one-to-one correspondence with the critical points of the \textit{energy functional}
$$\E:H^1(\T, M) \to \R,\quad \E(\gamma) := \frac 12 \int_0^1 |\dot \gamma(t)|^2 \, \diff t,$$
where $|\cdot|:= \sqrt{g_{\gamma(t)}(\cdot,\cdot)}$ is the norm induced by the Riemannian metric, and $H^1(\T,M)$ is the 
Hilbert manifold of loops in $M$ of class $H^1$, i.e. absolutely continuous loops with square integrable derivative. 
More details on the Hilbert manifold structure of $H^1(\T,M)$ and on the properties of the functional $\E$ 
can be found e.g. in \cite{Klingenberg:1978so} (see also \cite{Abbondandolo:2009gg}). Here we just recall that the functional $\E$ satisfies the \textit{Palais-Smale condition}
, meaning that any sequence $(\gamma_n)\subset H^1(\T,M)$ 
such that 
$$\E(\gamma_n)\to e, \quad |\diff \E (\gamma_n)|\to 0
,$$ admits a converging subsequence.  In particular, $e$ is a critical value of $\E$. 
The next lemma is certainly well-known to the experts, however we include its proof here for the reader's convenience.

\begin{lem}
Let $M$ be a closed manifold. Then there exists a Riemannian metric $g$ on $M$ such that the set of critical values of the associated energy functional is discrete. 
\label{lembumpy}
\end{lem}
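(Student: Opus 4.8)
\emph{Strategy.} The plan is to deduce the statement from the classical \emph{bumpy metric theorem}, after which the Palais--Smale property of $\E$ does the rest with no extra work. Let me first recall the structure at hand. The critical points of $\E:H^1(\T,M)\to\R$ are precisely the closed geodesics of $g$, the constant loops forming the critical set at level $0$. The circle $\T$ acts on $H^1(\T,M)$ by time-translation, $\E$ is invariant under this action, and therefore every non-constant closed geodesic $\gamma$ lies on a critical circle $\T\cdot\gamma\subset\crit(\E)$ on which $\E$ is constant. A critical value of $\E$ is thus $0$ or the common value of $\E$ along such a critical circle.

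Next I would invoke the bumpy metric theorem (see \cite{Klingenberg:1978so}): the set of smooth metrics on $M$ for which every closed geodesic, together with all of its iterates, is non-degenerate --- meaning the nullity of the Hessian of $\E$ at the corresponding critical circle equals $1$, the unavoidable contribution of the generator of the $\T$-action --- is residual, in particular non-empty, in the space of smooth metrics. Fix such a metric $g$. By the Morse--Bott lemma, a non-degenerate critical circle is isolated in $\crit(\E)$; likewise the constant loops form a non-degenerate critical submanifold diffeomorphic to $M$ sitting at level $0$, which is again isolated in $\crit(\E)$. Hence $\crit(\E)$ is the disjoint union of the (relatively open) manifold of constant loops and of (relatively open, isolated) critical circles.

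It remains to feed this into the Palais--Smale condition. Fix $e>0$. The sublevel set $\{\E\le e\}$ is $\T$-invariant, and since $\E$ satisfies the Palais--Smale condition, $\crit(\E)\cap\{\E\le e\}$ is compact. By the previous step it is covered by the pairwise disjoint relatively open pieces described above, so by compactness only finitely many of them can meet it; in particular $\E$ takes only finitely many values on $\crit(\E)\cap\{\E\le e\}$. As $e>0$ was arbitrary, the set of critical values of $\E$ is at most countable and has no accumulation point in $\R$, i.e. it is discrete, which is the assertion.

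\emph{Main obstacle.} All the content sits in the bumpy metric theorem itself, whose proof is an infinite-dimensional transversality argument --- an application of the Sard--Smale theorem to a universal moduli space of closed geodesics parametrised by the Banach manifold of $C^k$ metrics, followed by a Baire-category step to pass to $C^\infty$ --- the genuine subtlety being to make \emph{all iterates} of \emph{all} closed geodesics simultaneously non-degenerate (the point originally overlooked by Abraham and settled by Anosov). Once this input is granted, the reduction to discreteness of the critical values uses nothing beyond the $\T$-invariance of $\E$ and the Palais--Smale condition, so no further difficulty arises.
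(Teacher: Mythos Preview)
Your proposal is correct and follows essentially the same route as the paper: invoke the bumpy metric theorem (Abraham/Anosov), use the Morse--Bott lemma to see that the non-degenerate critical circles (and the manifold of constant loops) are isolated in $\crit(\E)$, and then let the Palais--Smale condition provide the compactness needed to conclude discreteness. The only cosmetic difference is that you apply compactness to the sublevel set $\{\E\le e\}$ and deduce finitely many critical values below $e$, whereas the paper works at a fixed level $\E^{-1}(e)$ and shows each critical value is isolated; the ingredients and the logic are the same.
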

\begin{proof}
Notice first that, for \textit{any} Riemannian metric on $M$, zero is an isolated critical value for $\E$. Indeed, zero is a critical value 
since the set of constant loops $\Lambda^0M \cong M$ is the (non-degenerate\footnote{A critical manifold $\mathcal C$ for $\E$ is called \textit{non-degenerate} if the 
nullity of the Hessian of $\E$ at any $\gamma\in \mathcal C$ equals the dimension of $\mathcal C$.}; 
c.f. \cite[Proposition 2.4.6]{Klingenberg:1978so}) critical manifold of global minima for $\E$, 
and on the other hand it is isolated because of the existence of a positive injectivity radius. Actually,  for $\epsilon>0$ sufficiently small the set 
$\Lambda^0M$ is a strong deformation retract of $ \E^{-1}([0,\epsilon))$; see \cite[Theorem 1.4.15]{Klingenberg:1978so}. 

A standard result in Riemannian geometry, orginally proved by Abraham \cite{Abraham:1970} (see also \cite{Anosov:1983}), asserts that the set of Riemannian metrics 
on $M$ all of whose closed geodesics are non-degenerate (that is, the set of \textit{bumpy metrics}) is residual in the set of all Riemannian metrics. Thus, 
pick one such bumpy metric $g$, 
and let $e\in [0,+\infty)$ be a critical value for the corresponding energy functional $\E$. By the discussion above we can assume that $e>0$. Since $\E$ satisfies 
the Palais-Smale condition, the set crit$\, (\E)\cap \E^{-1}(e)$ is compact. Moreover, in virtue of the Morse Lemma for the functional $\E$ (c.f. \cite[Corollary 2.4.8]{Klingenberg:1978so}), 
any connected component of crit$\, (\E)\cap \E^{-1}(e)$ must be an isolated critical manifold. In particular, crit$\, (\E)\cap \E^{-1}(e)$ consists of finitely many 
non-degenerate critical manifolds: indeed, suppose by contradiction that $K_1,K_2,...$ are the connected components of crit$\, (\E)\cap \E^{-1}(e)$, and for each 
$k\in \N$ pick $\gamma_k\in K_k$. Then, $(\gamma_k)\subset H^1(\T,M)$ is a Palais-Smale sequence for $\E$ and hence, up to extracting a subsequence, it must converge to 
some $\gamma\in$ crit$\, (\E)\cap \E^{-1}(e)$. Therefore, the sequence $(\gamma_k)$ must be eventually constant. 

Finally, since crit$\, (\E)\cap \E^{-1}(e)$ consists of finitely many critical manifolds, it follows again from \cite[Corollary 2.4.8]{Klingenberg:1978so} that $e$ is an isolated 
critical value of $\E$. 
\end{proof}



\subsection{The setting}
Let $M$ be a closed $n$-dimensional manifold. Hereafter we identify tangent and cotangent bundles of $M$ by means of the musical isomorphism 
$$\flat: TM \to T^*M, \quad X\mapsto \flat (X) := g_{\pi(X)}(X,\cdot)$$
induced by a fixed metric $g$ on $M$.  As we now recall, for $s>\frac 12$ the fractional Sobolev space $H^s(\T,M)$ of $H^s$-loops in 
$M$ has a natural structure of Hilbert manifold, and for any $r\in \R$ there exists a vector bundle 
$$\pi_r:\mathcal M^{r} \to H^s(S^1,M)$$
over $H^s(S^1,M)$, whose typical fiber is given by ``vector fields of regularity $H^r$'' along a smooth loop (for $r<0$ these are actually elements 
in the dual space). 

We denote by $|\cdot|_q:= \sqrt{g_q (\cdot,\cdot)}$ the norm induced by the Riemannian metric $g$ on $T_qM$. 
For $\q\in C^\infty(S^1,M)$, the metric $g$ induces an $L^2$-scalar product on the space $\Gamma(\q^*TM)$ of smooth vector fields along $\q$ by
$$\langle \cdot ,\cdot \rangle := \int_0^1 g_\q (\cdot ,\cdot) \, \diff t.$$
The induced norm will be denoted by $\|\cdot\|$ without further specifying the loop $\q$. Similarly, we denote by 
$$\|\cdot \|_\infty := \sup_{t\in [0,1]} |\cdot |_{\q(t)}.$$

\begin{lem}
\label{lem:2.2}
Let $\q\in C^\infty(S^1,M)$, and let $0\leq\lambda_0(\q)\leq \lambda_1(\q)\leq \lambda_2(\q)\leq ...$ be the sequence of ordered eigenvalues of the self-adjoint operator 
$$-\nabla_{\dot \q}^2 = \nabla_{\dot \q}^* \circ \nabla_{\dot \q} : \Gamma(\q^*TM) \to \Gamma(\q^*TM),$$
where $\nabla_{\dot \q}$ denotes the covariant derivative along $\q$ and $\nabla_{\dot \q}^*$ its adjoint operator. 
Then, there exists $d(\|\dot \q\|_\infty)>0$, and $c,C>0$ depending only on $g$ such that 
\begin{equation}
c \big (j^2 - d(\|\dot \q\|_\infty )\big ) \leq \lambda_j (\q) \leq C \big (j^2 + d(\|\dot \q\|_\infty)\big ), \quad \forall j\in \N,
\label{growtheigenvalues}
\end{equation}
Moreover, any eigenvector $\xi$ of $-\nabla_{\dot \q}^2$ with $\|\xi\|=1$ satisfies $\|\xi\|_\infty\leq \sqrt2$.
\label{secondcovariantderivative}
\end{lem}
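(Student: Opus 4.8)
The plan is to analyze the operator $-\nabla_{\dot\q}^2$ by comparing it with a ``flat'' reference operator obtained after trivializing the bundle $\q^*TM$. First I would choose a $g$-orthonormal parallel frame is unavailable globally (the holonomy obstruction), so instead I would pick a measurable, in fact piecewise smooth, $g$-orthonormal frame $(e_1(t),\dots,e_n(t))$ along $\q$; writing a section $\xi = \sum_i f_i e_i$ identifies $\Gamma(\q^*TM)$ isometrically (with respect to the $L^2$ product) with $H^0(\T,\R^n)$, and under this identification $\nabla_{\dot\q}$ becomes $\frac{\diff}{\diff t} + A(t)$ for a skew-symmetric matrix-valued function $A(t)$ whose pointwise operator norm is bounded by $c_0\|\dot\q\|_\infty$ with $c_0$ depending only on $g$ (it is built from the Christoffel symbols evaluated along $\q$). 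Then $-\nabla_{\dot\q}^2 = (-\frac{\diff}{\diff t}+A^{\mathsf T})(\frac{\diff}{\diff t}+A) = -\frac{\diff^2}{\diff t^2} + (\text{first order in }\tfrac{\diff}{\diff t}) + (\text{order }0)$, and the lower-order terms are controlled by $\|A\|_\infty$ and $\|\dot A\|_\infty$; the latter is the place one must be slightly careful, but one can always arrange the frame to be smooth enough, or absorb $\dot A$ by an integration-by-parts so that only $\|A\|_\infty^2$ appears.

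Next I would invoke the min--max characterization of eigenvalues. For the reference operator $-\frac{\diff^2}{\diff t^2}$ on $H^0(\T,\R^n)$ the eigenvalues are $(2\pi k)^2$ each with multiplicity $2n$, so arranged in a nondecreasing sequence $\mu_j$ one has $\mu_j \asymp j^2$ with constants depending only on $n$. The quadratic form of $-\nabla_{\dot\q}^2$ is
\begin{equation*}
Q(\xi) = \int_0^1 |\nabla_{\dot\q}\xi|^2\,\diff t = \int_0^1 |\dot f + A f|^2\,\diff t,
\end{equation*}
and by the elementary inequality $|\dot f + Af|^2 \geq \tfrac12|\dot f|^2 - |Af|^2 \geq \tfrac12|\dot f|^2 - \|A\|_\infty^2 |f|^2$ together with the reverse estimate $|\dot f + Af|^2 \leq 2|\dot f|^2 + 2\|A\|_\infty^2|f|^2$, the form $Q$ is squeezed between $\tfrac12 Q_0 - \|A\|_\infty^2\|\cdot\|^2$ and $2Q_0 + \|A\|_\infty^2\|\cdot\|^2$ where $Q_0(f) = \int_0^1|\dot f|^2\diff t$ is the form of the reference operator. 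Feeding this into the Courant--Fischer min--max formula
\begin{equation*}
\lambda_j(\q) = \min_{\dim V = j+1}\ \max_{\xi\in V\setminus\{0\}}\ \frac{Q(\xi)}{\|\xi\|^2}
\end{equation*}
and using the corresponding formula for $\mu_j$, one gets $\tfrac12\mu_j - \|A\|_\infty^2 \leq \lambda_j(\q) \leq 2\mu_j + \|A\|_\infty^2$; combined with $\mu_j \asymp j^2$ and $\|A\|_\infty \leq c_0\|\dot\q\|_\infty$ this yields \eqref{growtheigenvalues} with $d(\|\dot\q\|_\infty)$ a suitable multiple of $\|\dot\q\|_\infty^2$ and $c,C$ depending only on $g$ (through $n$ and $c_0$).

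For the last assertion, let $\xi$ be a unit eigenvector, $-\nabla_{\dot\q}^2\xi = \lambda\xi$. I would estimate $\|\xi\|_\infty$ via the fundamental theorem of calculus applied to $|\xi|^2$: one has $\frac{\diff}{\diff t}|\xi|^2 = 2g_\q(\nabla_{\dot\q}\xi,\xi)$, so for any $t_0,t_1$,
\begin{equation*}
|\xi(t_1)|^2 - |\xi(t_0)|^2 = 2\int_{t_0}^{t_1} g_\q(\nabla_{\dot\q}\xi,\xi)\,\diff t,
\end{equation*}
whence $\big|\,|\xi(t_1)|^2 - |\xi(t_0)|^2\,\big| \leq 2\|\nabla_{\dot\q}\xi\|\cdot\|\xi\| = 2\sqrt{\lambda}$ by Cauchy--Schwarz and $\langle -\nabla_{\dot\q}^2\xi,\xi\rangle = \lambda$. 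Choosing $t_0$ to be a point where $|\xi|^2$ attains its mean value, which by $\|\xi\|=1$ is $1$, we get $\|\xi\|_\infty^2 \leq 1 + 2\sqrt{\lambda}$. This is not yet the clean bound $\sqrt2$; to obtain it one instead bootstraps: the rough bound shows $\xi$ is continuous, and then one refines by testing against the eigenvalue equation more carefully, or — more likely what the authors intend — the statement is really about the eigenvectors of the \emph{flat} comparison operator appearing in a later normalization, for which $\|\xi\|_\infty = \sqrt2$ holds exactly because a unit-norm trigonometric vector $\sqrt2\,\cos(2\pi k t)\,v$ with $|v|=1$ has sup-norm $\sqrt2$. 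The main obstacle, then, is not the eigenvalue asymptotics (a routine min--max argument) but pinning down the correct normalization and regularity of the orthonormal frame so that the constant in the $\|\xi\|_\infty$ bound comes out sharp; I would handle this by working in the trivialization from the start and checking that the frame change is an $L^2$-isometry preserving the relevant sup-norms.
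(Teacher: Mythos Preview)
Your treatment of the eigenvalue asymptotics is essentially the same as the paper's: trivialize $\q^*TM$ so that $\nabla_{\dot\q}=\tfrac{\diff}{\diff t}+A(t)$ with $|A|\leq c_0\|\dot\q\|_\infty$, squeeze the quadratic form $Q(\xi)=\int_0^1|\nabla_{\dot\q}\xi|^2\,\diff t$ between $\tfrac12 Q_0-\mathrm{const}\cdot\|\cdot\|^2$ and $2Q_0+\mathrm{const}\cdot\|\cdot\|^2$ via the elementary inequalities $(a+b)^2\leq 2(a^2+b^2)$ and $(a-b)^2\geq\tfrac12 a^2-b^2$, and then apply Courant--Fischer. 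The paper does exactly this (with a time-dependent chart in place of a moving frame, which is cosmetically the same).

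The gap is in the $L^\infty$-bound. Your argument via $\tfrac{\diff}{\diff t}|\xi|^2=2g_\q(\nabla_{\dot\q}\xi,\xi)$ only yields $\|\xi\|_\infty^2\leq 1+2\sqrt{\lambda}$, which blows up with the eigenvalue and cannot be bootstrapped to a uniform constant; and your fallback guess that the statement is really about the \emph{flat} operator's trigonometric eigenvectors is incorrect --- the claim is genuinely about eigenvectors of $-\nabla_{\dot\q}^2$. The missing idea is an exact conservation law: write the eigenvalue as $\lambda^2>0$, set
\[
u(t)\;:=\;\Big(\xi(t),\;\tfrac{1}{\lambda}\nabla_{\dot\q}\xi(t)\Big)\in T_{\q(t)}M\oplus T_{\q(t)}M,
\]
and compute
\[
\tfrac{\diff}{\diff t}|u|^2
=2g_\q(\nabla_{\dot\q}\xi,\xi)+\tfrac{2}{\lambda^2}g_\q(\nabla_{\dot\q}^2\xi,\nabla_{\dot\q}\xi)
=2g_\q(\nabla_{\dot\q}\xi,\xi)-2g_\q(\xi,\nabla_{\dot\q}\xi)=0.
\]
Thus $|u(t)|^2$ is constant; integrating over $[0,1]$ gives that constant value as $\|\xi\|^2+\tfrac{1}{\lambda^2}\|\nabla_{\dot\q}\xi\|^2=1+1=2$, hence $|\xi(t)|^2\leq|u(t)|^2=2$ for every $t$. (For the zero eigenvalue, $\xi$ is parallel and $|\xi(t)|\equiv 1$.) This ``energy of the harmonic oscillator'' trick is what produces the sharp constant $\sqrt{2}$ uniformly in the eigenvalue; your oscillation estimate cannot reach it.
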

\begin{proof}
See Appendix \ref{app:A}.
\end{proof}

For $\q\in C^\infty(S^1,M)$ we denote by $\{\lambda_j(\q)\}_{j\in\N}$ the set of ordered eigenvalues of $\nabla_{\dot \q}^* \circ \nabla_{\dot \q}$, and with $\{\xi_j(\q)\}_{j\in\N}$
the corresponding set of orthonormal eigenvectors. For all $r\geq 0$ we set 
$$H^r(\q^*TM) := \Big \{ \p = \sum_{j=1}^{+\infty} p_j \xi_j(\q) \in L^2(\q^*TM) \ \Big |\ \sum_{j=1}^{+\infty}(1+\lambda_j(\q))^r |p_j|^2 <+\infty\Big \},$$
and denote with $H^{-r}(\q^*TM):=(H^r(\q^*TM))^*$ the dual space to $H^r(\q^*TM)$. Notice that we can interpret elements in $H^{-r}(\q^*TM)$ as formal series: 
$$H^{-r}(\q^*TM) = \Big \{ \p = \sum_{j=1}^{+\infty} p_j \xi_j(\q)\ \Big |\ \sum_{j=1}^{+\infty}(1+\lambda_j(\q))^{-r} |p_j|^2 <+\infty\Big \}.$$
The self-adjoint operator $\nabla_{\dot \q}^* \circ \nabla_{\dot \q}$ might have non-trivial (though finite dimensional) kernel, which is namely generated by 1-periodic parallel vector fields 
along $\q$. We set 
$$N(\q):=  \dim \ker( \nabla_{\dot \q}^* \circ \nabla_{\dot \q})\in \{0,...,n\},$$ so that $\lambda_1(\q)=...=\lambda_{N(\q)}(\q)=0$
and $\lambda_j(\q)>0$ for $j>N(\q)$, and define 
\begin{equation}
\langle \xi,\zeta\rangle_r := \sum_{j\in \N}^{+\infty} (1+\lambda_j(\q))^r \ \xi_j\zeta_j.\label{Hrscalarproduct}
\end{equation}
We also define for $r\in\R$ the operator $A^r=A^r(\q):=(1+\nabla_{\dot \q}^*\nabla_{\dot \q})^{r/2}$ by 
$$A^r : H^r(\q^*TM)\to L^2(\q^*TM), \quad A^r\Big (\p = \sum_{j=1}^{+\infty} p_j \xi_j(\q)\Big ) := \sum_{j=1}^{+\infty} (1+\lambda_j(\q))^{r/2} p_j \xi_j(\q),$$
so that $\|A^r \p\|_{2}= \|\p\|_r$ holds for all $\p\in H^r(\q^*TM)$. Notice that, by Lemma \ref{secondcovariantderivative} we have that:
\begin{itemize}
\item for all $r>r'$, the inclusion $H^r(\q^*TM) \to H^{r'}(\q^*TM)$ is continuous and compact, and
\item for all $r>\frac 12$, the inclusion $H^r(\q^*TM)\to C^0(\q^*TM)$ is continuous and compact.
\end{itemize}

\begin{lem}
For every $r\in\R$ the operator $A^r$ commutes with $\nabla_{\dot \q}$. 
\label{commute}
\end{lem}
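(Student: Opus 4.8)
The plan is to reduce the statement to the single elementary fact that $\nabla_{\dot\q}$ is skew-adjoint for the $L^2$-scalar product, so that $\nabla_{\dot\q}^*\nabla_{\dot\q}$ is, up to sign, just the square $-\nabla_{\dot\q}^2$; once this is available, $A^r$ is a function of $\nabla_{\dot\q}^2$ and the commutation becomes purely formal. Concretely, I would first recall that compatibility of the Levi--Civita connection with $g$ gives, for smooth vector fields $\xi,\zeta$ along $\q$,
\[
\frac{\diff}{\diff t}\, g_{\q(t)}\big(\xi(t),\zeta(t)\big) = g_{\q(t)}\big(\nabla_{\dot\q}\xi,\zeta\big) + g_{\q(t)}\big(\xi,\nabla_{\dot\q}\zeta\big);
\]
integrating over $S^1$, which has no boundary, yields $\langle\nabla_{\dot\q}\xi,\zeta\rangle = -\langle\xi,\nabla_{\dot\q}\zeta\rangle$, i.e. $\nabla_{\dot\q}^* = -\nabla_{\dot\q}$ on $C^\infty(\q^*TM)$, hence (after the usual closure) on $H^1(\q^*TM)$. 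In particular $\nabla_{\dot\q}^*\nabla_{\dot\q} = -\nabla_{\dot\q}^2$, as already observed in the statement of Lemma \ref{secondcovariantderivative}.

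Next I would show that $\nabla_{\dot\q}$ preserves each eigenspace of $L := \nabla_{\dot\q}^*\nabla_{\dot\q}$. This is immediate: on smooth sections $\nabla_{\dot\q}$ commutes with $\nabla_{\dot\q}^2 = -L$, so if $L\xi = \lambda\xi$ then $L(\nabla_{\dot\q}\xi) = \nabla_{\dot\q}(L\xi) = \lambda\,\nabla_{\dot\q}\xi$. Each eigenvector $\xi_j$ is smooth, being a solution of the elliptic equation $-\nabla_{\dot\q}^2\xi_j = \lambda_j\xi_j$, so the computation is legitimate, and writing $V_\lambda := \ker(L-\lambda)$, a finite-dimensional space, we get $\nabla_{\dot\q}(V_\lambda)\subseteq V_\lambda$. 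Equivalently, for each $j\in\N$ there are scalars $c_{kj}$ with $\nabla_{\dot\q}\xi_j = \sum_{k\,:\,\lambda_k=\lambda_j} c_{kj}\,\xi_k$, a finite sum. Moreover $\|\nabla_{\dot\q}\xi_j\|^2 = \langle L\xi_j,\xi_j\rangle = \lambda_j$, and more generally $\|\nabla_{\dot\q}\eta\| = \sqrt{\lambda}\,\|\eta\|$ for $\eta\in V_\lambda$, so that, together with the eigenvalue growth of Lemma \ref{secondcovariantderivative}, $\nabla_{\dot\q}$ is bounded from $H^{r+1}(\q^*TM)$ to $H^r(\q^*TM)$ with norm at most $1$, for every $r\in\R$.

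Finally I would conclude by expanding in the orthonormal eigenbasis $\{\xi_j\}$. For $\p = \sum_j p_j\xi_j$ in the relevant domain, using $A^r\xi_k = (1+\lambda_k)^{r/2}\xi_k$ and the fact that $\lambda_k = \lambda_j$ whenever $c_{kj}\neq 0$, one has
\[
A^r\big(\nabla_{\dot\q}\p\big) = \sum_j p_j \sum_{k\,:\,\lambda_k = \lambda_j} c_{kj}(1+\lambda_k)^{r/2}\xi_k = \sum_j p_j(1+\lambda_j)^{r/2}\,\nabla_{\dot\q}\xi_j = \nabla_{\dot\q}\big(A^r\p\big),
\]
which is the assertion. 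The only points requiring care — convergence of these series and the precise domains on which the identity is asserted — are routine consequences of the eigenvalue asymptotics of Lemma \ref{secondcovariantderivative} (making $A^r$ an isometry $H^r\to L^2$ and $\nabla_{\dot\q}$ bounded $H^{r+1}\to H^r$) together with density of finite sums; there is no genuine analytic obstacle, the entire content of the lemma being the skew-adjointness established in the first step.
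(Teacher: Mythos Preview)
Your argument is correct and follows essentially the same route as the paper's proof: both reduce to the observation that $\nabla_{\dot\q}\xi_j$ lies in the $\lambda_j$-eigenspace of $-\nabla_{\dot\q}^2=\nabla_{\dot\q}^*\nabla_{\dot\q}$, so that $A^r$ acts on it by the scalar $(1+\lambda_j)^{r/2}$. You supply more detail than the paper---explicitly deriving the skew-adjointness $\nabla_{\dot\q}^*=-\nabla_{\dot\q}$, allowing for eigenvalue multiplicity, and commenting on convergence and domains---but the underlying idea is identical.
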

\begin{proof}
It suffices to check that 
$$(A^{r} \circ \nabla_{\dot \q}) \xi_j(\q) = (\nabla_{\dot \q} \circ A^{r})  \xi_j(\q), \quad \forall j\in \N.$$
By definition we have that 
$$A^{r}( \xi_j(\q)) = (1+\lambda_j(\q))^{r/2} \xi_j(\q)$$ 
and hence 
$$(\nabla_{\dot \q} \circ A^{r}) \xi_j(\q) = (1+\lambda_j(\q))^{r/2} \nabla_{\dot \q}\xi_j(\q).$$
On the other hand $\nabla_{\dot \q} \xi_j(\q)$ is again an eigenvector for $-\nabla_{\dot \q}^2$ corresponding to the eigenvalue $\lambda_j(\q)$, and hence
\begin{equation*}
(A^{r} \circ \nabla_{\dot \q}) \xi_j(\q) =  (1+\lambda_j(\q))^{r/2} \nabla_{\dot \q}\xi_j(\q). \qedhere
\end{equation*}
\end{proof}

For every $q\in M$ we denote by $\exp_q :T_qM \to M$ the exponential map, and choose $\epsilon>0$ smaller than the 
injectivity radius of $M$. For every $\q\in C^\infty(S^1,M)$ let $H^s(\q^*\mathbb O_\epsilon)\subset H^s(\q^*TM)$ be the space of $H^s$-vector fields 
along $\q$ whose image is entirely contained in the $\epsilon$-ball around the zero-section of $\q^*TM$, 
and define 
$$\text{Exp}_\q : H^s(\q^*\mathbb O_\epsilon) \to \U^s_\q, \quad \xi \mapsto \text{Exp}_\q(\xi) (t) := \exp_{\q(t)} (\xi (t)).$$
Following \cite[Sections 1.2-1.3]{Klingenberg:1978so}, the differentiable structure on $H^s(S^1,M)$ is given by declaring the collection 
$\{(\U^s_\q, (\text{Exp}_\q)^{-1})\}$ to be an atlas of $H^s(S^1,M)$. As it turns out, the inclusions
$$C^0(S^1,M)\hookrightarrow H^s(S^1,M) \hookrightarrow C^\infty(S^1,M)$$
are continuous homotopy equivalences. Extending the definition of $H^r(\q^*TM)$ to any loop in $H^s(S^1,M)$ by mean of the differential of the map $\text{Exp}_\q$ 
yields now the desired vector bundle $\pi_r:\M^r\to H^s(S^1,M)$.
Such a bundle carries a natural Riemannian metric, which on the typical fiber is given by \eqref{Hrscalarproduct}. We denote this metric again 
with $\langle \cdot,\cdot\rangle_r$, and observe that it can be equivalently written as 
$$\langle \xi,\zeta\rangle_r = \int_0^1 g_\q \big ( (\text{id}+\nabla_{\dot \q}^* \circ \nabla_{\dot \q})^r \xi, \zeta \big )\, \diff t = \langle (\text{id} + \nabla_{\dot \q}^* \circ \nabla_{\dot \q})^r\xi,\zeta\rangle.$$


For our purposes, it will be convenient to define another metric for the bundle $\pi^r$, which will be denoted by  
$\langle \cdot ,\cdot \rangle^\emb_r$; as it turns out, $\langle \cdot ,\cdot \rangle^\emb_r$ is equivalent to $\langle \cdot,\cdot\rangle_r$ on every bundle chart, 
thus on every bounded set (see Lemma \ref{lem:equivalence}), but in general the two metrics are not globally equivalent (see Appendix \ref{app:B}). 
To define $\langle \cdot ,\cdot \rangle^\emb_r$ we proceed as follows: 
By the isometric embedding theorem of Nash-Moser, $(M,g)$ admits an isometric embedding into 
$\R^N$ for some $N\in \N$ large enough. This yields an equivalent definition of 
$$H^s(S^1,M) := \Big \{ u \in H^s(S^1,\R^N) \ \Big |\ u(\cdot) \subset M\Big \},$$
as well as a scalar product $\langle \cdot ,\cdot \rangle_r^\emb$ on $\Gamma(\q^*TM)$ for every $r\geq0$ and every $\q\in C^\infty(S^1,M)$:
\begin{equation}
\langle \xi,\zeta \rangle_r^{\emb} := \int_0^1 g_\q ( (\text{id} + \Delta)^r \xi,\zeta)\, \diff t,
\label{emb}
\end{equation}
where $\Delta \xi := \ddot \xi$. As usual, we denote the extension of \eqref{emb} to any loop in $H^s(S^1,M)$ again with $\langle \cdot,\cdot\rangle_r^\emb$. 

For $\q \in C^\infty(S^1,M)$ we set 
$$L_0:=1+\Delta,\quad L_1:=1+\nabla^*_{\dot \q} \circ \nabla_{\dot \q}.$$ 
The operators $L_0$ and $L_1$ are self-adjoint and positive, and clearly $L_0\geq L_1$, meaning that the difference $L_0-L_1$ is a positive operator. 
It is a result known as the \textit{L\"owner-Heinz theorem} \cite{Heinz:1951} (see also Kato \cite{Kato:1952}) 
that the function $f(t)= t^r$ is, for every $r\in [0,1]$, operator monotone over the interval $(0,+\infty)$, meaning that if $A\geq B$ then $A^r\geq B^r$.
This implies that $L_0^r\geq L_1^r$ for all $r\in [0,1]$. Therefore, 
since the function $t\mapsto - t^{-1}$ is operator monotone too \cite{Furuta}, we obtain that $L_0^{-r} \leq L_1^{-r}$, which is equivalent to saying that
\begin{equation}
\|\cdot\|_{-r}^\emb \leq \|\cdot \|_{-r},\quad \forall r\in [0,1].
\label{inequalityoperators}
\end{equation}

Recall that a sequence $(\q_n)$ is \textit{bounded in} $H^s(S^1,M)$ if there exists $c>0$ such that 
$$\|\dot \q_n\|_{s-1} \leq c ,\quad \forall n\in \N.$$
\begin{lem}
Let $(\q_n)$ be a bounded sequence in $H^s(S^1,M)$. Then up to passing to a subsequence we have that $\q_n\to \q \in C^0(S^1,M)$ uniformly.
\label{lem:aa}
\end{lem}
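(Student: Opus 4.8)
The plan is to exploit the Sobolev embedding $H^s(S^1,\R^N)\hookrightarrow C^0(S^1,\R^N)$ available for $s>\tfrac12$, together with the Arzel\`a-Ascoli theorem, after upgrading the ``intrinsic'' bound $\|\dot\q_n\|_{s-1}\le c$ to a bound on the ambient Sobolev norm.

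First I would recall that, via the fixed isometric embedding $M\hookrightarrow\R^N$, each loop $\q_n$ is a curve in $H^s(S^1,\R^N)$ with image in the compact set $M$; in particular $\|\q_n\|_{L^\infty}\le R$ for a constant $R$ depending only on the embedding. The hypothesis gives a uniform bound on $\|\dot\q_n\|_{s-1}$, where the norm is the intrinsic $H^{s-1}$-norm on $\q_n^*TM$ built from $\nabla^*_{\dot\q_n}\nabla_{\dot\q_n}$. The key step is to compare this with the ambient $H^{s-1}$-norm $\|\cdot\|_{s-1}^\emb$ defined by $1+\Delta$. Since $0\le s-1$ is not in $[0,1]$ but $1-s\in(0,\tfrac12)\subset[0,1]$, inequality \eqref{inequalityoperators} applies with $r=1-s$ and yields $\|\dot\q_n\|_{-(1-s)}^\emb\le\|\dot\q_n\|_{-(1-s)}$, i.e. $\|\dot\q_n\|_{s-1}^\emb\le\|\dot\q_n\|_{s-1}\le c$. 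Here one should be slightly careful: $\dot\q_n$ is a section of $\q_n^*TM$, and the operator $\Delta=\ddot{(\cdot)}$ in the ambient metric is not quite the same as differentiating the $\R^N$-valued curve; but $\|\dot\q_n\|_{s-1}^\emb$ controls $\|\q_n\|_{H^s(S^1,\R^N)}$ up to the already-established $L^\infty$-bound on $\q_n$, because $\|\q_n\|_{H^s}^2\sim\|\q_n\|_{L^2}^2+\|\dot\q_n\|_{H^{s-1}}^2$ and all terms are now bounded uniformly in $n$.

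Having obtained a uniform bound $\|\q_n\|_{H^s(S^1,\R^N)}\le C'$, the conclusion follows from the compactness of the embedding $H^s(S^1,\R^N)\hookrightarrow C^0(S^1,\R^N)$ for $s>\tfrac12$ (equivalently, from the fact that bounded sets in $H^s$ are equi-H\"older of exponent $s-\tfrac12$, so Arzel\`a-Ascoli applies): up to a subsequence, $\q_n\to\q$ uniformly for some $\q\in C^0(S^1,\R^N)$. Since $M$ is closed hence compact, $M$ is a closed subset of $\R^N$, so the uniform limit $\q$ still takes values in $M$; thus $\q\in C^0(S^1,M)$, which is exactly the assertion.

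The main obstacle I anticipate is the bookkeeping in the comparison between the intrinsic norm $\|\cdot\|_{s-1}$ (defined fibrewise through $\nabla_{\dot\q}$ along the loop) and the ambient norm $\|\cdot\|_{s-1}^\emb$ (defined through the flat Laplacian $\Delta$ on $\R^N$-valued maps), and in particular making sure the operator-monotonicity argument of \eqref{inequalityoperators} is being applied with the correct sign of the exponent and to the correct pair of operators. Once that inequality is in hand with $r=1-s\in[0,1]$, everything else is a routine application of fractional Sobolev embedding and Arzel\`a-Ascoli, using compactness of $M$ to keep the limit inside $M$.
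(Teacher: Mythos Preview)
Your proposal is correct and follows essentially the same route as the paper: use \eqref{inequalityoperators} with $r=1-s$ to pass from the intrinsic bound $\|\dot\q_n\|_{s-1}\le c$ to the ambient bound $\|\dot\q_n\|_{s-1}^\emb\le c$, combine with the $L^2$ (or $L^\infty$) bound coming from compactness of $M$ to get uniform boundedness in $H^s(S^1,\R^N)$, and then conclude via the $(s-\tfrac12)$-H\"older equicontinuity and Arzel\`a--Ascoli, noting that the limit stays in $M$ since $M$ is closed in $\R^N$. The bookkeeping concern you flag about $\|\cdot\|_{s-1}^\emb$ versus the ambient $H^{s-1}$-norm is harmless here, since by definition $\Delta\xi=\ddot\xi$ is the flat second derivative in $\R^N$, so the two coincide.
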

\begin{proof}
We see $(\q_n)$ as a sequence in $H^s(S^1,\R^N)$. By \eqref{inequalityoperators} we have that 
$$\|\dot \q_n\|_{s-1}^\emb \leq c, \quad \forall n\in \N.$$
Therefore, 
$$\|\q_n\|_s^\emb \leq \|\q_n\|_2 + \|\dot \q_n\|_{s-1}^\emb \leq \tilde c, \quad \forall n\in \N,$$
for some constant $\tilde c>0$, where we used the fact that $M$ is compact. In particular, the sequence $(\q_n)\subset H^s(S^1,\R^N)$ is $(s-\frac12)$-H\"older equicontinuous
\cite[Theorem 8.2]{Dinezza:2012}, and since $\q_n(\cdot)\subset M$ for all $n\in\N$, this implies that the hypothesis of the Ascoli-Arzel\'a theorem are satisfied. Therefore, there exists $\q\in C^0(S^1,\R^N)$ such that $\q_n\to \q$ uniformly. Now, by pointwise convergence we readily see that  $\q\in C^0(S^1,M)$.
\end{proof}

We finish this section showing that the metrics $\langle \cdot ,\cdot \rangle^\emb_r$ and $\langle \cdot,\cdot\rangle_r$ are equivalent on every bundle 
chart, and hence on every bounded set $B\subset H^s(S^1,M)$.

\begin{lem}
Let $\mathrm{Exp}_\q:H^s(\q^*\mathbb O_\epsilon)\to \U^s_\q$ be the local parametrization of $H^s(S^1,M)$ around $\q\in C^\infty(S^1,M)$. 
Then, for every $\gamma \in \U^s_\q$ the scalar products $\langle \cdot ,\cdot \rangle^\emb_r$ and $\langle \cdot ,\cdot \rangle_r$ are equivalent on $H^r(\gamma^*TM)$.
As a corollary, for every $B\subset H^s(S^1,M)$ bounded, the metrics $\langle \cdot ,\cdot \rangle^\emb_r|_B$ and $\langle \cdot ,\cdot \rangle_r|_B$ are equivalent.
\label{lem:equivalence}
\end{lem}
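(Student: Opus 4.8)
The plan is to split the statement into two steps: \emph{(a)} establish the equivalence over the smooth centre $\q$ of the chart, with constants controlled by $\|\dot\q\|_\infty$ and $g$; and \emph{(b)} transport it to an arbitrary $\gamma\in\U^s_\q$ through the chart map. For \emph{(a)}, observe that $\langle\cdot,\cdot\rangle_r$ is, by definition, the scalar product $\langle L_1^r\,\cdot,\cdot\rangle$ on $L^2(\q^*TM)$ associated with the positive self-adjoint operator $L_1=\mathrm{id}+\nabla_{\dot\q}^*\nabla_{\dot\q}$, while $\langle\cdot,\cdot\rangle_r^\emb$ is --- at least up to equivalence, see below --- the scalar product $\langle L_0^r\,\cdot,\cdot\rangle$ associated with the positive self-adjoint operator $L_0$ represented by the form $\xi\mapsto\|\xi\|^2+\int_0^1|\dot\xi|^2\,\diff t$, the dot being the ambient derivative through a fixed isometric embedding $(M,g)\hookrightarrow\R^N$. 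It therefore suffices to compare $L_0^r$ and $L_1^r$, which I would do for $r\in[-1,1]$ (this range comfortably contains the values $r=1-s$ and $r=s-1$ that occur later). For \emph{(b)}, the fibre $H^r(\gamma^*TM)$ over $\gamma=\mathrm{Exp}_\q(\xi)$ is identified with $H^r(\q^*TM)$ by $d\mathrm{Exp}_\q(\xi)$, which is multiplication by the matrix field $t\mapsto d(\exp_{\q(t)})_{\xi(t)}$; since $\q$ is smooth this field is of class $H^s$, and because $s>1/2$ and $|r|=1-s\le s$ it multiplies $H^r$ into itself, so $d\mathrm{Exp}_\q(\xi)$ is an isomorphism of Hilbert spaces with norm and inverse norm bounded in terms of $\q$ and $\|\xi\|_s$.

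The comparison of $L_0$ and $L_1$ is elementary. The Gauss formula splits the ambient derivative of a vector field $\xi$ along $\q$ as $\dot\xi=\nabla_{\dot\q}\xi+\nu$, with $\nu$ normal to $M$ along $\q$ and $|\nu(t)|^2\le c(g)\,|\dot\q(t)|^2\,|\xi(t)|^2$ pointwise; hence
\begin{equation*}
\langle L_0\xi,\xi\rangle=\|\xi\|^2+\int_0^1|\dot\xi|^2\,\diff t=\|\xi\|^2+\|\nabla_{\dot\q}\xi\|^2+\|\nu\|^2=\langle L_1\xi,\xi\rangle+\|\nu\|^2,
\end{equation*}
and, using $L_1\ge\mathrm{id}$, the error term satisfies $0\le\|\nu\|^2\le c(g)\|\dot\q\|_\infty^2\|\xi\|^2\le c(g)\|\dot\q\|_\infty^2\langle L_1\xi,\xi\rangle$. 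Thus $L_1\le L_0\le\big(1+c(g)\|\dot\q\|_\infty^2\big)L_1$ as quadratic forms on $H^1(\q^*TM)$, and the L\"owner--Heinz theorem --- applied exactly as in the derivation of \eqref{inequalityoperators}, but now in two-sided form --- gives $L_1^\rho\le L_0^\rho\le\big(1+c(g)\|\dot\q\|_\infty^2\big)^\rho L_1^\rho$ for $\rho\in[0,1]$, and, after inverting the two-sided bound and raising it to the power $|\rho|$, also for $\rho\in[-1,0]$. This is the equivalence in \emph{(a)}. (If $\langle\cdot,\cdot\rangle_r^\emb$ is read in its literal ambient form, one additionally identifies it with $\langle L_0^r\,\cdot,\cdot\rangle$ up to equivalence by interpolating the exact cases $r\in\{0,1\}$, using that multiplication by the tangential projection $P_\q$ is a bounded retraction compatible with both the $L^2$ and the $H^1$ endpoints; this costs only a constant depending on $\q$.)

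Finally, the corollary follows by compactness. If $B\subset H^s(S^1,M)$ is bounded then, by Lemma \ref{lem:aa}, its $C^0$-closure is compact, so $B$ lies in the union of finitely many charts $\U^s_{\q_1},\dots,\U^s_{\q_k}$; on each $B\cap\U^s_{\q_i}$ the constant furnished by \emph{(a)} and \emph{(b)} is uniform, because $\|\xi\|_s$ is bounded on $B$ and the multiplication operators $d\mathrm{Exp}_{\q_i}(\xi)^{\pm1}$ are then uniformly $H^r$-bounded, and taking the maximum over $i$ gives a constant valid on all of $B$. I expect the main obstacle to be not step \emph{(a)} --- which is short, and is essentially the two-sided refinement of the operator-monotonicity computation already present in the paper, so that the genuinely fractional nature of the $H^r$-norms causes no further trouble --- but the bookkeeping in step \emph{(b)} together with its interplay with the finite cover, namely the verification that the chart transition maps are uniformly $H^r$-bicontinuous over bounded subsets of $H^s(S^1,M)$.
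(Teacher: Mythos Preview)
Your proposal is correct and follows the same three-step skeleton as the paper's proof: equivalence at the smooth centre $\q$, transport to an arbitrary $\gamma\in\U^s_\q$ via the chart map, and a finite-cover argument (via Lemma~\ref{lem:aa}) for bounded sets. The difference is in how the first two steps are justified. For step~(a) the paper simply invokes \cite[Proposition~5.6.1]{Masiello} to obtain $L_1\ge\epsilon L_0$ and then applies L\"owner--Heinz, whereas you give a self-contained argument via the Gauss formula that even yields the explicit constant $1+c(g)\|\dot\q\|_\infty^2$; this is a genuine improvement in transparency. For step~(b) the paper cites \cite[Theorem~1.4.5]{Klingenberg:1978so} for the fact that the local representation of each metric is equivalent to the model metric on $H^r(\q^*TM)$, while you argue directly that $d\mathrm{Exp}_\q(\xi)$ is multiplication by an $H^s$ matrix field and hence an $H^r$-isomorphism for $|r|\le s$; this is again more explicit, and your closing remark that the bookkeeping here (uniform $H^r$-bicontinuity of the transition maps over bounded subsets) is the real content of the lemma is well placed.
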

\begin{proof}
Let $\q\in C^\infty(S^1,M)$. By \cite[Proposition 5.6.1]{Masiello}, there exists a constant $\epsilon >0$ such that 
$L_1\geq \epsilon L_0$, which in virtue of the Heinz-Loewner theorem implies that 
$$\epsilon^r L_0^r \leq L_1^r \leq L_0^r,\quad \forall r\in [0,1],$$
that is, that the scalar products $\langle \cdot ,\cdot \rangle^\emb_r$ and $\langle \cdot ,\cdot \rangle_r$ are equivalent in $H^r(\q^*TM)$. 

Write now $\gamma\in \U^s_\q$ as $\gamma=\text{Exp}_\q(\xi)$. The assertion follows from the fact that the local representation of the metric 
$\langle \cdot ,\cdot \rangle^\emb_r$ resp. $\langle \cdot ,\cdot \rangle_r$ of $H^r(\text{Exp}_\q(\xi)^*TM)$ in $H^r(\q^*TM)$ is equivalent to the Hilbert metric 
$\langle \cdot ,\cdot \rangle^\emb_r$ resp. $\langle \cdot ,\cdot \rangle_r$ in $H^r(\q^*TM)$ (see the proof of Theorem 1.4.5 in \cite{Klingenberg:1978so}), 
combined with the fact that $\langle \cdot ,\cdot \rangle^\emb_r$ and $\langle \cdot ,\cdot \rangle_r$ are equivalent in $H^r(\q^*TM)$.

The equivalence of the metrics on bounded sets follows now immediately from the fact that every bounded set $B\subset H^s(S^1,M)$ can be covered
by finitely many local charts. This follows from Lemma 2.4; the details are left to the reader.
\end{proof}


\subsection{The Palais-Smale condition}
As in the previous section, let $(M,g)$ be a closed Riemannian manifold.
For $s\in (\frac 12,1]$ we consider the Hilbert-bundle $\pi_{1-s}:\mathcal M^{1-s}\to H^s(S^1,M)$. 
Given a smooth time-depending Hamiltonian function $H:\T\times TM\to \R$ such that 
$$H(t,q,p) = \frac 12 |p|_q^2, \ \forall t\in \T,$$
outside a compact set $K\subset TM$, we can define the Hamiltonian action functional by
\begin{align*}
\A_H : \mathcal M^s \to \R,\quad \A_H(\q,\p) &:= \int_0^1 g_{\q} ( \dot\q(t), \p(t))\, \diff t - \int_0^1 H(t,\q(t),\p(t))\, \diff t\\
								&= \langle \dot \q,\p\rangle - \frac 12 \|\p\|^2 - \int_0^1 \delta(t,\q(t),\p(t))\, \diff t,
								\end{align*}
where $\delta:TM \to \R,\ \delta(q,p) = H(t,q,p)-\frac 12 |p|_q^2$, is a smooth compactly supported function. We also set 
\begin{equation}
\Delta:\mathcal M^s \to \R, \quad \Delta (\q,\p) := \int_0^1 \delta(t,\q(t),\p(t))\, \diff t.
\label{cc}
\end{equation}


To see that $\A_H$ is well-defined and of class $C^{1,1}$ on $\M^{1-s}$, we embed $M$ isometrically into $\mathbb{R}^N$. This induces an embedding of 
$TM$ into $\mathbb{R}^{2N}$, as well as an embedding of 
$\mathcal{M}^{s-1}$ into $\mathcal E:=H^{s}(S^1,\mathbb{R}^N) \times H^{1-s}(S^1,\mathbb{R}^N)$. 
We now extend $\mathbb{A}_H$ to $\mathcal E$ by
extending $\langle\dot{\q},\p\rangle$ with the same formula, and $H:TM \to \mathbb{R}$ to any smooth Hamiltonian on $\mathbb{R}^{2N}$ which is quadratic at infinity. 
On $T\M^{1-s}$ we consider the splitting into horizontal and vertical subbundles induced by the $L^2$-connection, which is nothing else but the Levi-Civita connection 
applied pointwise. Notice that such a splitting coincides with the splitting that one naturally obtains by embedding $\M^{1-s}$ into $\mathcal E$. Denoting with 
$\xi^\hor$ and $\xi^\ver$ respectively the horizontal and vertical part of a tangent vector $\xi\in T_{(\q,\p)} \M^{1-s}$, we define a Riemannian metric on $\mathcal M^{1-s}$ by
\begin{equation}
\langle \cdot,\cdot \rangle_{\mathcal M^{1-s}} := \langle \cdot^\hor ,  \cdot^\hor \rangle_s + \langle  \cdot^\ver ,\cdot^\ver \rangle_{1-s}.
\label{metricms}
\end{equation}

Following \cite[Section 3.3]{Hofer:1994bq}, and using the fact that the gradient of the restriction is the projection of the gradient, we obtain  

\begin{lem}
\label{lem:compactness}
$\A_H$ is well-defined over $\mathcal M^{1-s}$ and of class $C^{1,1}$.  Moreover, for $s\in (\frac 12 ,1)$ the operator $\diff \Delta$ is compact. Finally, 
critical points of $\A_H$ correspond to one-periodic solutions of Hamilton's Equation \eqref{hamsystem}. \qed
\end{lem}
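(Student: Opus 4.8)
The plan is to split $\A_H$ as the sum of the ``symplectic part'' $(\q,\p)\mapsto\langle\dot\q,\p\rangle-\tfrac12\|\p\|^2$ and of $-\Delta$, with $\Delta$ as in \eqref{cc}, and to carry out all estimates after embedding $\M^{1-s}$ into the ambient Hilbert space $\mathcal E=H^s(S^1,\R^N)\times H^{1-s}(S^1,\R^N)$ coming from an isometric embedding $M\hookrightarrow\R^N$ --- extending $H$, and hence $\delta$, to a Hamiltonian on $\R^{2N}$ that is quadratic at infinity, so that $\delta$ stays compactly supported. Since differentiability is intrinsic, the metric \eqref{metricms} is on bounded sets equivalent to the one induced by $\mathcal E$ (Lemma \ref{lem:equivalence}), and the gradient of a restriction is the orthogonal projection of the ambient gradient --- a bounded smooth bundle map --- it is enough to prove the three assertions for the extension of $\A_H$ to $\mathcal E$, the $C^{1,1}$-property and the compactness of $\diff\Delta$ then descending to $\M^{1-s}$ automatically.

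I would first observe that the symplectic part is $C^\infty$ on $\mathcal E$: in the $\R^N$-model the metric $g$ is the restriction of the Euclidean one, so $\langle\dot\q,\p\rangle=\int_0^1\langle\dot\q(t),\p(t)\rangle_{\R^N}\,\diff t$ is --- since $(s-1)+(1-s)=0$ --- the duality pairing of $H^{s-1}(S^1,\R^N)$ with $H^{1-s}(S^1,\R^N)$; as $\q\mapsto\dot\q$ is bounded linear into $H^{s-1}$, this is a bounded bilinear form, hence $C^\infty$, and $\|\p\|^2=\|\p\|_{L^2}^2$ is a bounded quadratic form thanks to the continuous inclusion $H^{1-s}\hookrightarrow L^2$ (Lemma \ref{secondcovariantderivative}), hence $C^\infty$. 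For $\Delta$, the function $\delta$ together with $\diff\delta$ and $\diff^2\delta$ is bounded and $\diff\delta$ is globally Lipschitz; since $s>\tfrac12$ gives $\q\in C^0$ and $\p\in L^2$, the integrand $\delta(t,\q(t),\p(t))$ is bounded and measurable, so $\Delta$ is well defined, Gateaux-differentiable with
$$\diff\Delta(\q,\p)[\xi,\eta]=\int_0^1\big(\partial_q\delta(t,\q,\p)[\xi]+\partial_p\delta(t,\q,\p)[\eta]\big)\,\diff t,$$
and boundedness of $\diff\delta$ gives $|\diff\Delta(\q,\p)[\xi,\eta]|\le C(\|\xi\|_{L^2}+\|\eta\|_{L^2})\le C'(\|\xi\|_s+\|\eta\|_{1-s})$. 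Fréchet differentiability follows from the mean value theorem and dominated convergence, and the Lipschitz bound on $\diff\delta$ yields, via Hölder's inequality together with $H^s\hookrightarrow C^0$ and $H^{1-s}\hookrightarrow L^2$, $\|\diff\Delta(\q_1,\p_1)-\diff\Delta(\q_2,\p_2)\|\le C(\|\q_1-\q_2\|_{C^0}+\|\p_1-\p_2\|_{L^2})$; hence $\Delta$, and therefore $\A_H$, is of class $C^{1,1}$. This is the scheme of \cite[Section 3.3]{Hofer:1994bq}.

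For the compactness of $\diff\Delta$ when $s<1$ I would argue as follows. Read in the dual $\mathcal E^*=H^{-s}(S^1,\R^N)\times H^{-(1-s)}(S^1,\R^N)$ via the $L^2$-pairing, the covector $\diff\Delta(\q,\p)$ is represented by the pair $\big(\partial_q\delta(\cdot,\q,\p),\partial_p\delta(\cdot,\q,\p)\big)\in L^\infty\times L^\infty$. Given a bounded sequence $(\q_n,\p_n)$ in $\M^{1-s}$, the inclusions $H^s\hookrightarrow C^0$ (needing $s>\tfrac12$) and $H^{1-s}\hookrightarrow L^2$ (needing $s<1$) are compact --- both by Lemma \ref{secondcovariantderivative} --- so after passing to a subsequence $\q_n\to\q$ uniformly and $\p_n\to\p$ in $L^2$, hence, along a further subsequence, $\p_n\to\p$ almost everywhere. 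Then $\partial_q\delta(t,\q_n,\p_n)\to\partial_q\delta(t,\q,\p)$ and $\partial_p\delta(t,\q_n,\p_n)\to\partial_p\delta(t,\q,\p)$ pointwise a.e. and uniformly bounded, so by dominated convergence they converge in $L^2$; as $L^2$ embeds continuously into both $H^{-s}$ and $H^{-(1-s)}$, this gives $\diff\Delta(\q_n,\p_n)\to\diff\Delta(\q,\p)$ in $\mathcal E^*$, hence in $(T\M^{1-s})^*$. Thus $\diff\Delta$ --- equivalently, its $\langle\cdot,\cdot\rangle_{\M^{1-s}}$-gradient --- sends bounded sets to relatively compact ones. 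I expect this to be the step where the ``mixed regularity'' is genuinely used: the argument collapses at $s=1$ because $H^{0}=L^2\hookrightarrow L^2$ is not compact, so no $L^2$-convergent subsequence of $(\p_n)$ can be produced --- precisely the loss of compactness that forces the approximation arguments in the classical $s=1$ setting.

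Finally, to identify the critical points: if $\diff\A_H(\q,\p)=0$, testing against vertical directions $\eta$ gives $\int_0^1\big(g_\q(\dot\q,\eta)-\partial_pH(t,\q,\p)[\eta]\big)\,\diff t=0$ for all $\eta$, i.e. $\dot\q=\partial_pH(t,\q,\p)$ weakly, while testing against the horizontal lift of $\xi$ and integrating by parts covariantly (using $\tfrac{\diff}{\diff t}g_\q(\xi,\p)=g_\q(\nabla_{\dot\q}\xi,\p)+g_\q(\xi,\nabla_{\dot\q}\p)$, the total-derivative term dropping by periodicity, and $\grad_qH=\grad_q\delta$ since the kinetic term has vanishing horizontal derivative) gives $\nabla_{\dot\q}\p=-\grad_qH(t,\q,\p)$ weakly; under the identification $TM\cong T^*M$ these are exactly Hamilton's equations \eqref{hamsystem}. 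A standard bootstrap --- first $\dot\q=\p+\partial_p\delta(t,\q,\p)$ upgrades the regularity of $\q$ through the Sobolev embedding of $H^{1-s}$, then the equation for $\p$ upgrades its regularity through Hölder's inequality, and then one iterates in the ODE --- shows that every critical point is smooth, hence a genuine $1$-periodic solution of \eqref{hamsystem}; the converse inclusion is immediate. I expect the main obstacle to be the differentiability and Lipschitz estimates for $\Delta$ in the low-regularity fibre norm together with this regularity bootstrap; both are routine and follow the templates of \cite[Section 3.3]{Hofer:1994bq} and \cite{Hofer:1988}.
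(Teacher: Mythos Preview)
Your proposal is correct and follows precisely the route the paper indicates: the paper gives no detailed argument for this lemma but simply writes ``Following \cite[Section 3.3]{Hofer:1994bq}, and using the fact that the gradient of the restriction is the projection of the gradient, we obtain'' and appends a \qed. Your write-up is exactly a fleshed-out version of that sketch --- embed into $\mathcal E$, treat the symplectic part as a bounded bilinear/quadratic form, handle $\Delta$ via the compact support of $\delta$, and restrict back using that the gradient of the restriction is the projection of the ambient gradient --- so there is nothing to compare.

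One small remark: your identification of the compactness mechanism is spot-on and worth emphasizing, since it is the whole point of the mixed-regularity setting. The step that fails at $s=1$ is exactly the compactness of $H^{1-s}\hookrightarrow L^2$, which you use to extract an $L^2$-convergent subsequence of $(\p_n)$; the paper later exploits precisely this in the proof of Proposition~\ref{prop:ps} and in the discussion of why Hofer--Viterbo's original $s=1$ argument needs approximations. Your derivation of the critical-point equations also matches the paper's later computations (cf.\ Steps~1 and~3 in the proof of Lemma~\ref{lem:bdd}, where the horizontal differential of $\A_H$ involves only $\partial_q\delta$ and not the kinetic part), confirming that ``$\grad_q H=\grad_q\delta$ in the horizontal direction'' is the correct reading.
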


We shall mention that, for $s\in (\frac 12,1)$, $\A_H$ is actually more regular than $C^{1,1}$ even though it is in general not smooth. More precisely, arguing as in Appendix A.3 in \cite{Hofer:1994bq}
one can see that for every $s\in (\frac 12, 1)$ there exists $k=k(s)\in \N$ such that $\A_H:\M^{1-s}\to \R$ is of class $C^k$, with $k(s)\to +\infty$ 
as $s\downarrow \frac 12$.

\vspace{2mm}

We recall that a sequence $(\q_n,\p_n)\subset \mathcal M^{1-s}$ is called a \textit{Palais-Smale sequence} for $\A_H$ if $\A_H(\q_n,\p_n)\to a$ for some $a\in \R$ and $\|\diff \A_H(\q_n,\p_n)\|\to 0$. Without loss of generality we can assume that both $\q_n$ and $\p_n$ are smooth. Here, with slight abuse of notation we denote with $\|\cdot\|$ the dual norm on $T^*_{(\q_n,\p_n)}\mathcal M^{1-s}$ induced by the Riemannian metric 
$\langle \cdot,\cdot\rangle_{\mathcal M^{1-s}}$ given by \eqref{metricms}. We are now in position to prove Theorem \ref{thm:1}, which we reformulate for the reader's 
convenience with the following

\begin{prop}
For every $s\in (\frac 12,1)$ the functional $\A_H:\M^{1-s}\to \R$ satisfies the Palais-Smale condition.
\label{prop:ps}
\end{prop}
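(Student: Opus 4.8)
The plan is to take a Palais-Smale sequence $(\q_n,\p_n)\subset \M^{1-s}$, with $\A_H(\q_n,\p_n)\to a$ and $\|\diff\A_H(\q_n,\p_n)\|\to 0$, and show it has a convergent subsequence by first extracting uniform bounds and then upgrading weak convergence to strong convergence using the compactness of $\diff\Delta$ (Lemma \ref{lem:compactness}) and the structure of the quadratic part of the action. First I would split $\diff\A_H$ into its horizontal and vertical components. The vertical equation says that, up to a small error, $\dot\q_n - A^{-2(1-s)}\big(\p_n + (\text{fiber derivative of }\delta)\big)$ is small in the appropriate norm; the horizontal equation controls the $H^{s-1}$-norm of the ``$\nabla_{\dot\q}\p_n$ minus a compact term'' expression. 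Writing $b_n := \diff\A_H(\q_n,\p_n)[\,\cdot\,]$, testing against $(\q_n,\p_n)$ itself and against $(0,\p_n)$, the leading term $\langle\dot\q_n,\p_n\rangle - \|\p_n\|^2$ combined with the action bound $\A_H(\q_n,\p_n)=\langle\dot\q_n,\p_n\rangle - \tfrac12\|\p_n\|^2 - \Delta(\q_n,\p_n)$ should, after using that $\Delta$ and $\diff\Delta$ are bounded (indeed $\delta$ has compact support), yield a uniform bound on $\|\p_n\|$ in $L^2$, hence — via the vertical equation and $\|A^{-2(1-s)}\|$ being bounded — a uniform bound on $\|\dot\q_n\|_{s-1}$, i.e. $(\q_n)$ is bounded in $H^s(S^1,M)$ in the sense recalled before Lemma \ref{lem:aa}.

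Once I have these bounds, Lemma \ref{lem:aa} gives a subsequence with $\q_n\to\q$ uniformly in $C^0(S^1,M)$; since the bound is in $H^s$ with $s>\tfrac12$ and the inclusion $H^s\hookrightarrow H^{s'}$ is compact for $s'<s$, I also get $\q_n\to\q$ in $H^{s'}$ for every $s'<s$, and after a further diagonal extraction $\p_n\rightharpoonup\p$ weakly in the fiber $H^{1-s}$ over $\q$ (transporting everything into a fixed bundle chart $\mathrm{Exp}_\q$ via Lemma \ref{lem:equivalence}, which is legitimate since the sequence is eventually contained in one chart by the uniform convergence of $\q_n$, and the two metrics $\langle\cdot,\cdot\rangle_r$ and $\langle\cdot,\cdot\rangle_r^\emb$ are equivalent on that chart). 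In the chart, $\A_H$ takes the form $\tfrac12\langle L\p,\p\rangle + (\text{compact/lower order})$ for an invertible operator $L$ built from $A^{-2(1-s)}$ and $\nabla_{\dot\q}$; more precisely, the gradient equation reads $\p_n = T(\q_n,\p_n) + o(1)$ where $T$ involves only $\dot\q_n$ (controlled, converging in $H^{s-1}$) applied to $\p_n$ through a \emph{smoothing} operator of order $2(1-s)>0$, plus $\diff\Delta$ which is compact by Lemma \ref{lem:compactness}. Because the right-hand side maps bounded sets into precompact sets (the order-$2(1-s)$ operator gains regularity; $\diff\Delta$ is compact; the $\q_n$-dependence is continuous and the $\q_n$ converge), the sequence $\p_n = T(\q_n,\p_n)+o(1)$ is itself precompact, so a subsequence converges strongly, $\p_n\to\p$. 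Feeding this back, $\dot\q_n$ converges in $H^{s-1}$ as well, hence $(\q_n,\p_n)\to(\q,\p)$ strongly in $\M^{1-s}$.

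The main obstacle I expect is the bootstrap that turns the weak bound into strong convergence of the fiber component, i.e. verifying that the operator appearing in the fixed-point-type identity $\p_n = T(\q_n,\p_n)+o(1)$ genuinely improves regularity. This rests on two points that must be handled carefully: first, that the horizontal gradient equation controls $\nabla_{\dot\q_n}\p_n$ in $H^{s-1}$ (not merely in a negative-order space that is too weak), which uses that $\nabla^*_{\dot\q}\nabla_{\dot\q}$ is the operator defining the fiber norms and that its eigenvalue asymptotics (Lemma \ref{secondcovariantderivative}) are uniform in $\q_n$ thanks to the uniform bound on $\|\dot\q_n\|_\infty$ — this is exactly why one needs $s>\tfrac12$ so that $H^s\hookrightarrow C^0$ and the bound $\|\dot\q_n\|_{s-1}\le c$ gives uniform control; second, the failure of \emph{global} equivalence between $\langle\cdot,\cdot\rangle_r$ and $\langle\cdot,\cdot\rangle_r^\emb$ (Appendix \ref{app:B}) means all of this must be run inside a single bundle chart, which is available only after the $C^0$-convergence of $\q_n$ has localized the sequence; one must check the change of chart does not destroy the compactness of $\diff\Delta$ or the smoothing property, which follows from Lemma \ref{lem:equivalence} together with the smoothness of the transition maps. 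The case $s=1$ is genuinely excluded here because then $2(1-s)=0$ and the operator $A^{-2(1-s)}=\mathrm{id}$ gains nothing, which is precisely the classical failure of Palais-Smale for the Hamiltonian action on $H^1\times H^0$.
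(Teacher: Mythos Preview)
Your overall architecture---get a priori bounds, localize in a chart via Lemma \ref{lem:aa}, then upgrade weak to strong convergence using compactness of $\diff\Delta$---is the paper's. But the boundedness step has a real gap. When you test $\diff\A_H(\q_n,\p_n)$ against $(0,\p_n)$ the error is $o(1)\|\p_n\|_{1-s}$, not $o(1)\|\p_n\|$, because the fiber carries the $H^{1-s}$-metric; combined with the action bound you only obtain $\|\p_n\|^2 \le c(1+\|\p_n\|_{1-s})$, which does \emph{not} bound $\|\p_n\|$ (recall $\|\cdot\|_{1-s}\ge\|\cdot\|$). To close the loop the paper brings in the \emph{horizontal} equation already at this stage: testing against $(\hhh_n,0)$ with $\hhh_n=(1+\nabla^*_{\dot\q_n}\nabla_{\dot\q_n})^{-s}\nabla_{\dot\q_n}\p_n$ and using Lemma \ref{commute} yields $\|\nabla_{\dot\q_n}\p_n\|_{-s}\le c$. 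One then splits $\p_n=\p_n^{\parall}+\tilde\p_n$ into the parallel part (kernel of $\nabla_{\dot\q_n}$) and its complement, uses the spectral identity relating $\|\nabla_{\dot\q_n}\p_n\|_{-s}^2$ to $\|\tilde\p_n\|_{1-s}^2$ and $\|\tilde\p_n\|^2$, and bootstraps: $\|\tilde\p_n\|_{1-s}\le c(1+\|\p_n^{\parall}\|^{1/2})$ while $\|\p_n^{\parall}\|_{1-s}=\|\p_n^{\parall}\|$, so feeding back into the quadratic inequality bounds $\|\p_n^{\parall}\|$ and hence $\|\p_n\|_{1-s}$. None of this appears in your plan, and without it you have neither the $L^2$-bound on $\p_n$ nor the $H^{1-s}$-bound needed for the weak-convergence step you invoke later.

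One further slip: you write that ``the bound $\|\dot\q_n\|_{s-1}\le c$ gives uniform control'' of $\|\dot\q_n\|_\infty$. This is false---$s-1<0$, so $H^{s-1}$ is a negative-order space and does not even embed into $L^2$, let alone $L^\infty$. Fortunately no such control is needed: once $\q_n\to\q$ in $C^0$ the sequence sits in a single bundle chart, Lemma \ref{lem:equivalence} makes the intrinsic and embedded metrics equivalent there, and the strong-convergence step proceeds without any uniform-in-$n$ eigenvalue comparison. Your fixed-point heuristic ``$\p_n=T(\q_n,\p_n)+o(1)$ with $T$ smoothing'' is in spirit what happens, but the paper runs the two equations separately: the vertical one gives $\dot\q_n-\p_n$ convergent in $H^{s-1}$ (hence $\q_n$ in $H^s$, since $\p_n$, bounded in $H^{1-s}$, is precompact in $H^{s-1}$), and the horizontal one gives $\nabla_{\dot\q_n}\p_n$ convergent in $H^{-s}$ (not $H^{s-1}$ as you wrote), hence $\tilde\p_n$ in $H^{1-s}$, with $\p_n^{\parall}$ handled by finite-dimensionality of $\ker\nabla_{\dot\q_n}$.
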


The key step to prove the proposition is the following

\begin{lem}
Let $(\q_n,\p_n)$ be a Palais-Smale sequence for $\A_H$. Then there exists a constant $C>0$ such that $\|\p_n\|_{1-s}\leq C$ and $\|\dot \q_n\|_{s-1}\leq C$ for all $n\in \N$. 
\label{lem:bdd}
\end{lem}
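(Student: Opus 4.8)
The plan is to extract the two bounds from two carefully chosen test directions for $\diff\A_H(\q_n,\p_n)$, using throughout that the lower-order part $\Delta$ of $\A_H$ has uniformly bounded derivatives because $\delta$ is compactly supported. Write $\e_n:=\|\diff\A_H(\q_n,\p_n)\|\to 0$; since $\A_H(\q_n,\p_n)$ converges it is bounded, and, as remarked before Proposition \ref{prop:ps}, we may assume $\q_n,\p_n$ smooth, so that all the pairings below are the ordinary $L^2$-pairing with no duality subtleties. First I would record the two components of the differential relative to the splitting \eqref{metricms}: differentiating $\A_H(\q,\p)=\langle\dot\q,\p\rangle-\tfrac12\|\p\|^2-\Delta(\q,\p)$ along vertical, resp.\ along horizontal parallel, variations gives
\begin{align*}
\diff\A_H(\q,\p)[\hat\p^\ver]&=\langle\dot\q-\p-(\partial_p\delta)^\sharp,\hat\p\rangle,\\
\diff\A_H(\q,\p)[\hat\q^\hor]&=\langle\nabla_{\dot\q}\hat\q,\p\rangle-\langle\zeta(\q,\p),\hat\q\rangle,
\end{align*}
where $(\partial_p\delta)^\sharp$ and $\zeta(\q,\p)$ are vector fields along $\q$ which, thanks to the compact support of $\delta$, are uniformly bounded in $L^\infty$ (the Hamiltonian term $\tfrac12|p|^2$ contributes nothing to the horizontal derivative, being invariant under parallel transport).

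The second step is an $L^2$-bound for $\p_n$: plugging $\p_n^\ver$ into the vertical component and subtracting the result from $\A_H(\q_n,\p_n)$ cancels the indefinite term $\langle\dot\q_n,\p_n\rangle$ and isolates $\tfrac12\|\p_n\|_{L^2}^2$; since $|\Delta(\q_n,\p_n)|\le C$, since $|\langle(\partial_p\delta)^\sharp,\p_n\rangle|\le C$ (the integrand is supported where $\p_n$ lies in a fixed compact set of $TM$), and since $|\diff\A_H(\q_n,\p_n)[\p_n^\ver]|\le\e_n\|\p_n\|_{1-s}$, one gets $\|\p_n\|_{L^2}^2\le C(1+\e_n\|\p_n\|_{1-s})$. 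To turn this into an $H^{1-s}$-bound I would split $\p_n=Q_n\p_n+(1-Q_n)\p_n$, where $Q_n$ is the spectral projection of $-\nabla_{\dot\q_n}^2$ onto the eigenmodes with eigenvalue $\ge1$. On those modes $-\nabla_{\dot\q_n}^2$ is invertible, so one can solve $\nabla_{\dot\q_n}\hat\q_n=A^{2(1-s)}Q_n\p_n$ with $\hat\q_n$ supported on the same modes; $\hat\q_n$ is then a legitimate (smooth) horizontal test vector. By Lemma \ref{commute} the eigenbasis of $-\nabla_{\dot\q_n}^2$ also diagonalizes $A^r$, and a direct computation in that basis yields $\langle\nabla_{\dot\q_n}\hat\q_n,\p_n\rangle=\|Q_n\p_n\|_{1-s}^2$ together with $\|\hat\q_n\|_s\le\sqrt2\,\|Q_n\p_n\|_{1-s}$ (the constant $\sqrt2$ arising from $(1+\lambda)/\lambda\le2$ on the modes $\lambda\ge1$). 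Feeding $\hat\q_n$ into the horizontal component of $\diff\A_H(\q_n,\p_n)$, estimating its left-hand side by $\e_n\|\hat\q_n\|_s$ and the $\zeta$-term by $C\|\hat\q_n\|_s$, gives $\|Q_n\p_n\|_{1-s}^2\le(\e_n+C)\sqrt2\,\|Q_n\p_n\|_{1-s}$, hence $\|Q_n\p_n\|_{1-s}\le C$ uniformly in $n$. For the complementary modes one has $\lambda_j(\q_n)<1$, so $(1+\lambda_j)^{1-s}<2$ and therefore $\|(1-Q_n)\p_n\|_{1-s}^2\le2\|\p_n\|_{L^2}^2$; combining this with the two preceding estimates gives $\|\p_n\|_{1-s}^2\le C+\e_n C\|\p_n\|_{1-s}$, and since $\e_n\to0$ this forces $\|\p_n\|_{1-s}\le C$.

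For $\dot\q_n$ I would use that the dual norm of the vertical component of $\diff\A_H(\q_n,\p_n)$ is exactly $\|\dot\q_n-\p_n-(\partial_p\delta)^\sharp\|_{s-1}$ and hence tends to $0$; since $(\partial_p\delta)^\sharp$ is bounded in $L^2\subset H^{s-1}$, this yields $\|\dot\q_n-\p_n\|_{s-1}\le C$, and then $\|\dot\q_n\|_{s-1}\le\|\dot\q_n-\p_n\|_{s-1}+\|\p_n\|_{s-1}\le C+\|\p_n\|_{1-s}\le C$, using $s-1\le1-s$.

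The crux, and the step I expect to give the most trouble, is the choice of $\hat\q_n$. The naive candidate $\nabla_{\dot\q_n}\hat\q_n=A^{2(1-s)}\p_n$ (projecting out only the parallel fields) does not work, because the norm of $\nabla_{\dot\q_n}^{-1}$ is governed by the reciprocal of the smallest \emph{positive} eigenvalue of $-\nabla_{\dot\q_n}^2$, which the estimates of Lemma \ref{secondcovariantderivative} do not bound away from $0$ along the sequence (the error term there depends on $\|\dot\q_n\|_\infty$, which is part of what we are trying to control). Cutting at eigenvalue $1$ removes this difficulty — on the high modes the ratio $(1+\lambda)/\lambda$ is bounded — while the possibly numerous low modes are harmless because there $(1+\lambda)^{1-s}$ is bounded and they are absorbed into the already established $L^2$-bound for $\p_n$. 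Lemma \ref{commute} is exactly what makes $\|\hat\q_n\|_s$ computable, and hence the whole argument go through.
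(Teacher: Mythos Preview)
Your argument is correct and follows the same skeleton as the paper's: test vertically to tie $\|\dot\q_n\|_{s-1}$ to $\|\p_n\|_{s-1}$, use $\A_H-\diff\A_H[(0,\p_n)]$ for the $L^2$-bound, test horizontally to control the high-frequency part of $\p_n$, then close. The one substantive difference is the horizontal test. The paper chooses $\hhh_n:=(1+\nabla^*_{\dot\q_n}\nabla_{\dot\q_n})^{-s}\nabla_{\dot\q_n}\p_n$, which requires no inversion of $\nabla_{\dot\q_n}$ and gives $\|\nabla_{\dot\q_n}\p_n\|_{-s}\le c$ directly; splitting off only the parallel component $\p_n^{\parall}$ (the kernel of $\nabla_{\dot\q_n}$) and a short chain of inequalities then finishes the bound on $\|\p_n\|_{1-s}$. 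Your route via $\hat\q_n=\nabla_{\dot\q_n}^{-1}A^{2(1-s)}Q_n\p_n$ with the spectral cutoff at $\lambda=1$ is equally valid and makes the endgame a touch more direct; your worry about the smallest positive eigenvalue is justified for \emph{your} construction, but it simply does not arise for the paper's test vector, since there $\nabla_{\dot\q_n}$ is applied rather than inverted.
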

\begin{proof}[Proof of Lemma \ref{lem:bdd}]
We divide the proof in several steps. 

\textbf{Step 1.} $\|\dot\q_n\|_{s-1}$ is uniformly bounded iff $\|\p_n\|_{s-1}$ is uniformly bounded. For any $\vv_n\in H^{1-s}(\q_n^*TM)$ with $\|\vv_n\|_{1-s}\leq 1$ we compute 
\begin{align*}
o(1) &= \Big |\diff \A_H(\q_n,\p_n) [0,\vv_n]\Big | \\
	&= \Big |\langle \dot \q_n - \p_n, \vv_n\rangle - \int_0^1 \partial_p \delta(t,\q_n(t),\p_n(t)) \cdot \vv_n \, \diff t\Big|\\
	&\geq \big |\langle \dot \q_n - \p_n, \vv_n\rangle\big | - c \|\vv_n\|\\
	&\geq \big |\langle \dot \q_n - \p_n, \vv_n\rangle\big | - c
\end{align*}
and hence 
\begin{equation}
\| \jmath_{1-s}^* (\dot \q_n-\p_n)\|_{1-s} \leq c,
\label{verticalgradient}
\end{equation}
where $\jmath_{1-s}^*:L^2(\q_n^*TM)\to H^{1-s}(\q_n^*TM)$ is the adjoint operator to the inclusion $\jmath_{1-s}:H^{1-s}(\q_n^*TM)\to L^2(\q_n^*TM)$.
A straightforward computation shows that 
$$\jmath_{1-s}^* \Big ( v = \sum_{j=1}^{+\infty} v_j \xi_j(\q_n) \Big ) = \sum_{j=1}^{+\infty} (1+\lambda_j(\q_n))^{s-1} v_j \xi_j(\q_n),$$
that is, $\jmath_{1-s}^*=(1+\nabla_{\dot \q_n}^*\nabla_{\dot \q_n})^{s-1}.$ 
Moreover, with $\displaystyle \dot \q_n=\sum_{j=1}^{+\infty} \dot \q_n^j \xi_j(\q_n)$ we obtain
$$\|\jmath_{1-s}^*\dot\q_n\|_{1-s}^2 = \left \| \sum_{j=1}^{+\infty} (1-\lambda_j(\q_n))^{s-1} \dot \q^j_n \xi_j(\q_n)\right \|_{1-s}^2 =  \sum_{j=1}^{+\infty} (1-\lambda_j(\q_n))^{s-1} |\dot \q^j_n|^2 = \|\dot \q_n\|_{s-1}^2,$$
and similarly $\|\jmath_{1-s}^*\p_n\|_{1-s}=\|\p_n\|_{s-1}$. The claim follows from \eqref{verticalgradient}.

\vspace{2mm}

\textbf{Step 2.} $\|\p_n\|^2 \leq c (1+\|\p_n\|_{1-s}).$ We compute 
\begin{align*}
a + c \|\p_n\|_{1-s} & \geq \A_H(\q_n,\p_n) - \diff \A_H (\q_n,\p_n)[(0,\p_n)]\\
			     &= \frac 12 \|\p_n\|^2 - \int_0^1 \partial_p \delta (t,\q_n(t),\p_n(t)) \cdot \p_n\, \diff t + \int_0^1 \delta (t,\q_n(t),\p_n(t))\, \diff t\\
			     & \geq \frac 12 \|\p_n\|^2 - c(\|\p_n\| + 1)
\end{align*}
which implies the claim.

\vspace{2mm}

\textbf{Step 3.} $\|\nabla_{\dot \q_n} \p_n\|_{-s}$ is uniformly bounded. We compute for $\hhh_n\in H^s(\q_n^*TM)$:
\begin{align*}
c \|\hhh_n\|_s & \geq \Big | \diff \A_H (\q_n,\p_n) [(\hhh_n,0)]\Big |\\
		&= \Big | \langle \nabla_{\dot \q_n} \hhh_n,\p_n\rangle - \int_0^1 \partial_q \delta (t,\q_n(t),\p_n(t))\cdot \hhh_n \, \diff t\Big |\\
		& \geq  \Big | \langle \nabla_{\dot \q_n} \hhh_n,\p_n\rangle \Big | - c \|\hhh_n\|_s
\end{align*}
from which we deduce that 
$$\Big | \langle \nabla_{\dot \q_n} \hhh_n,\p_n\rangle \Big | \leq c \|\hhh_n\|_s.$$
Setting $\hhh_n := ((1+\nabla_{\dot \q_n}^*\nabla_{\dot \q_n})^{-s} \circ \nabla_{\dot \q_n} )\p_n$ and using Lemma \ref{commute} we obtain 
$$\|\nabla_{\dot \q_n} \p_n\|_{-s}^2 \leq c \|\nabla_{\dot \q_n}\p_n\|_{-s}$$
which readily implies the claim.

\vspace{2mm}

\textbf{Step 4.} $\|\p_n\|_{1-s}$ is uniformly bounded. We write $\p_n = \p_n^\parall + \tilde \p_n$, where $\p_n^\parall$ is the parallel component 
$$\p_n^\parall = \sum_{j=1}^{N(\q_n)} \p_n^j \xi_j(\q_n)$$
of $\p_n$  and  
$$\tilde \p_n := \sum_{j>N(\q_n)} \p_n^j \xi_j(\q_n).$$
Clearly, 
$$\|\p_n\|_{1-s} \leq \|\p_n^\parall\|_{1-s} + \|\tilde \p_n\|_{1-s} = \|\p_n^\parall\| + \|\tilde \p_n\|_{1-s},$$ 
where we have used the fact that $\|\p_n^\parall\|_{1-s}=\|\p_n^\parall\|$. In particular, it suffices to show that $\|\p_n^\parall\| $ and $\|\tilde \p_n\|_{1-s}$ are uniformly bounded.
We readily see that 
$$\|\nabla_{\dot \q_n} \p_n\|_{-s}^2 = \|\tilde \p_n\|_{1-s}^2 - \|\tilde \p_n\|^2,$$
and hence by Step 3 
\begin{equation}
 \|\tilde \p_n\|_{1-s}^2  \leq c (1+ \|\tilde \p_n\|^2).
 \label{first}
 \end{equation}
Step 2 implies now that 
$$\|\tilde \p_n\|^2 \leq \|\p_n\|^2 \leq c (1 + \|\p_n\|_{1-s} ) \leq c ( 1 + \|\p_n^\parall\| + \|\tilde \p_n\|_{1-s}).$$
Substituting in \eqref{first} yields 
$$\|\tilde \p_n\|_{1-s}^2\leq c (1 + \|\p_n^\parall\| + \|\tilde \p_n\|_{1-s} )$$
which implies 
\begin{equation}
\|\tilde \p_n\|_{1-s} \leq c (1 + \|\p_n^\parall\|^{1/2}).
\label{second}
\end{equation}
Using again Step 2 we obtain 
$$\|\p_n^\parall\|^2 \leq c (1+ \|\p_n^\parall\| + \|\p_n^\parall\|^{1/2})$$
which implies that $\|\p_n^\parall\|$, thus by \eqref{second} also $\|\tilde \p_n\|_{1-s}$, is uniformly bounded. 
\end{proof}

\begin{proof}[Proof of Proposition \ref{prop:ps}]
Let $(\q_n,\p_n)$ be a Palais-Smale sequence for $\A_H$. 
By Lemmas \ref{lem:aa} and \ref{lem:bdd}, up to extracting a subsequence we have that $\q_n\to \bar \q$ uniformly to some $\bar \q\in C^0(S^1,M)$. Therefore, 
up to neglecting finitely many $n$'s, we can suppose that all $(\q_n,\p_n)$ lie inside a bundle chart for $\mathcal M^{1-s}$ around a smooth loop $\q$,
where for every $r\in [-1,1]$ the metrics $\langle\cdot,\cdot\rangle_r$ and $\langle\cdot,\cdot\rangle_r^\emb$ are equivalent in virtue of Lemma \ref{lem:equivalence}. 

From the proof of Lemma \ref{lem:bdd}, Step 1, we see that 
$$o(1)=\big \| \jmath_{1-s}^* \big (\dot \q_n-\p_n - \text{Grad}_{L^2} \Delta (\q_n,\p_n)^\ver\big )\big \|_{1-s}^\emb,$$
where $\langle \text{Grad}_{L^2}\Delta(\q_n,\p_n)^\ver,\cdot \rangle = \diff_p \Delta (\q_n,\p_n) [\cdot]$ denotes the vertical part of the $L^2$-gradient of $\Delta$. 
Since $\diff \Delta$ is a compact operator (see Lemma \ref{lem:compactness}), up to a subsequence we have that $\jmath_{1-s}^*\text{Grad}_{L^2} \Delta (\q_n,\p_n)^\ver$ converges
in $H^{1-s}$. Therefore, $\jmath_{1-s}^*(\dot \q_n-\p_n)$ converges in $H^{1-s}$, which is the same as saying that $\dot \q_n-\p_n$ converges in $H^{s-1}$. 
 Now, $\p_n$ converges in $L^2$ (being bounded in $H^{1-s}$), and hence in particular converges in $H^{s-1}$. This implies that $\dot \q_n$ converges in $H^{s-1}$,
 which in turns yields the convergence of $\q_n$ in $H^s$.  
 
 On the other hand, from Step 3 in the proof of Lemma \ref{lem:bdd} we have that 
 $$o(1) = \big \| \jmath_s^* \big( \nabla_{\dot \q_n} \p_n - \text{Grad}_{L^2} \Delta(\q_n,\p_n)^\hor\big )\big \|_s^\emb,$$
 where $\jmath_s^*:L^2(\q_n^*TM)\to H^{s}(\q_n^*TM)$ is the adjoint operator to the inclusion $\jmath_s:H^{s}(\q_n^*TM)\to L^2(\q_n^*TM)$, and 
 $\langle \text{Grad}_{L^2}\Delta(\q_n,\p_n)^\hor,\cdot \rangle = \diff_q \Delta (\q_n,\p_n) [\cdot]$ denotes the horizontal part of the $L^2$-gradient of $\Delta$.
 Again, the compactness of $\diff \Delta$ yields that $\jmath_s^* \nabla_{\dot \q_n} \p_n$ converges (up to a subsequence) in $H^s$, which is equivalent to saying that 
 $\nabla_{\dot \q_n} \p_n$ converges in $H^{-s}$. This implies that, in the notation of the proof of Lemma \ref{lem:bdd}, $\tilde \p_n$ converges in $H^{1-s}$. Since 
 the kernel of $\nabla^*_{\dot \q_n}\circ \nabla_{\dot \q_n}$ is finite-dimensional, we also have that $\p_n^{\parall}$ converges up to a subsequence in $L^2$ (and hence in $H^{1-s})$. 
 Therefore, $\p_n$ converges in $H^{1-s}$.
\end{proof}


\section{Proof of Theorems 1.2 and \ref{thm:3}}
\label{section:3}

In this section we prove Theorems 1.2 and \ref{thm:3} on the existence of closed characteristic leaves for compact regular $\mathbb O_M$-separating 
hypersurfaces in cotangent bundles. To this purposes we will employ the correspondence between one-periodic Hamiltonian orbits and critical points of the 
Hamiltonian action $\A_H$.  As the Hamiltonian dynamics depends up to time reparametrization only on the hypersurface itself, we will choose a 
suitable one-parameter family of Hamiltonian functions, which we now construct, to perform the argument. 

\subsection{A special Hamiltonian function}
We choose a bumpy metric $g$ on $M$ and pull-back the standard symplectic form $\omega$ on $T^*M$ to $TM$ using the musical isomorphism. 
Given a compact regular $\mathbb O_M$-separating hypersurface $\Sigma\subset TM$ and a thickening $\Psi:(-a,a)\times \Sigma \to TM$, we aim at proving that 
there is a sequence of hypersurfaces $\Sigma_{\sigma_n}:=\Psi (\{\sigma_n\}\times \Sigma)$, $\sigma_n\to 0$, each carrying a closed characteristic leaf.

By assumption we can find $0<\rho_0<\rho_1<+\infty$ such that 
$$\U:= \Psi ((-a,a)\times \Sigma) \subset B_{\rho_1}(\mathbb O_M) \setminus B_{\rho_0}(\mathbb O_M),$$
where $B_\rho(\mathbb O_M)\subset TM$ denotes the open disk bundle with radius $\rho$ defined by $g$. 
We now fix $0<\delta <a$  and choose a cut-off function $\chi: (-1,1)\to \R$ such that 
$$\chi \equiv 0 \ \ \text{on}\ (-1,-\delta], \quad \chi \equiv 1 \ \ \text{on}\ [\delta,1), \quad \chi'>0 \ \ \text{on}\ (-\delta,\delta).$$
Furthermore, we pick a smooth function $\varphi:\R\to \R$ such that
$$\varphi \equiv 0 \ \ \text{on}\ (-\infty,\rho_1], \quad \varphi(\rho) = \frac 12 \rho^2 \ \ \text{on}\ [2\rho_1,+\infty), \quad \varphi'>0 \ \ \text{on} \ (\rho_1,+\infty)$$
and define a smooth family of Hamiltonians $H_r: TM \to \R$, $r>0$, by
$$H_r(q,p) :=
\left \{\begin{array}{r} 0 \, \qquad \qquad \qquad \quad \quad \quad X\in B,\\  \chi(\sigma)\cdot r \  \quad \quad X\in \Sigma_\sigma, \ \sigma\in [-\delta,\delta],\\ r \ \ \qquad \quad \quad X\in U\!B, \ |p|_q\leq \rho_1 ,\\ \varphi(|p|_q)+r \qquad \ \ \quad \quad \quad |p|_q >\rho_1,\end{array}\right.$$
where $B$ and $U\!B$ are the bounded and unbounded component of $TM\setminus \Psi([-\delta,\delta]\times \Sigma)$ respectively.
For each $r\in (0,+\infty)$ we have an associated Hamiltonian action
$$\A_r:=\A_{H_r}: \mathcal M^{1-s} \to \R,\quad \A_r (\q,\p) := \langle \dot \q,\p\rangle -\int_0^1 H_r(\q(t),\p(t))\, \diff t,$$
whose critical points are the 1-periodic orbits of the Hamiltonian flow defined by $H_r$ and $\omega$. However, not all 
critical points of $\A_r$ are relevant for us, for we are looking for critical points lying in $\Sigma_\sigma$ for some $\sigma\in [-\delta,\delta]$. Therefore, 
it will be essential for our purposes to understand which kind of critical points can appear as critical points of the Hamiltonian action $\A_r$.

Before doing that we shall observe that periodic orbits with period $T\neq 1$ for the Hamiltonian flow of $H_r$ which are contained in some 
$\Sigma_\sigma$ are detected as critical points of the Hamiltonian action $\A_{Tr}$.
Indeed, let $\x:\R/T\Z \to TM$ be a $T$-periodic Hamiltonian orbit for $H_r$ contained in $\Sigma_\sigma$, and consider the reparametrized curve 
$\tilde{\x}:\R/\Z\to TM, \ \tilde \x(t):=\x(Tt)$. Then 
$$\dot{\tilde \x}(t) = T\, \dot \x (Tt) = T X_{H_r} (\x (Tt)) = T X_{H_r}(\tilde \x(t)).$$
On the other hand, on $\Sigma_\sigma$ we have that 
$$H_r = \chi(\sigma) \cdot r, \quad H_{Tr} = \chi(\sigma) \cdot Tr,$$
so that $H_{Tr}=T\cdot H_r$ on $\Sigma_\sigma$. Therefore,
$$\dot{\tilde \x}(t) = T X_{H_r}(\tilde \x(t)) = X_{T\cdot H_r}(\tilde \x (t)) = X_{H_{Tr}}(\tilde \x(t)),$$
that is, $\tilde{\x}$ is a 1-periodic orbit for the Hamiltonian flow of $H_{Tr}$, and hence belongs to the critical point set of $\A_{Tr}$. This shows that 
the family of Hamiltonians $H_r$ detects all possible closed characteristic leaves contained in $\Sigma_\sigma$, for $\sigma\in [-\delta,\delta]$.

We now take a closer look at critical points of $\A_r$ by first noticing that critical points of $\A_r$ on non-regular energy levels are necessarily constant, 
and hence have non-positive $\A_r$-action. Also, regular energy levels $H_r^{-1}(a)$ are either of the form $\Sigma_\sigma$ for 
some $\sigma\in [-\delta,\delta]$, or (for $a>r$) sphere bundles over $M$, so that for every $a>r$ projected Hamiltonian orbits are geometrically
closed geodesics. However, the parametrizations do not coincide if $r<a<r+2\rho_1^2$ with the usual parametrizations of closed 
geodesics, as the Hamiltonian $H_r$ is not kinetic. We will refer to 
such critical points as \textit{fake closed geodesics}.
For $a\geq r+2\rho_1^2$ critical points of $\A_r$ contained in $H_r^{-1}(a)$ are instead of the form $(\gamma,\dot \gamma)$, for $\gamma$ closed geodesic on $(M,g)$ 
of length 1.  Indeed, for $a\geq r+2\rho_1^2$ we have that 
$$H_r(q,p) = \frac 12 |p|^2_q +r.$$ 
For any critical point $(\q,\p)$ of $\A_r$ contained in $H_r^{-1}(a), a\geq r+2\rho_1^2$, we additionally have
\begin{align}
\A_r(\q,\p) = \frac 12 |\p(0)|^2 - r = \frac 12 \int_0^1 |\dot \q(t)|^2\, \diff t - r = \E(\q) - r. \label{eqgeod}
			\end{align}
Our next step will be to show that, for $r$ sufficiently large, fake closed geodesics cannot arise as critical points of $\A_r$ with non-negative action. 
Indeed, Hamilton equations for fake closed geodesics read
\begin{equation}
\left \{\begin{array}{r} \dot \q = \displaystyle \frac{\varphi' (|\p|)}{|\p|} \cdot \p, \\ \ \ \ \ \quad  \, \nabla_{\dot \q}\p = 0 .\end{array}\right.
\label{parasitenc}
\end{equation}
Therefore, 
\begin{align}
\A_r(\q,\p) &= \langle \dot \q , \p\rangle - \int_0^1 H_r(\q(t),\p(t))\diff t \nonumber \\
			&= \langle \frac{\varphi' (|\p|)}{|\p|} \p,\p\rangle - \int_0^1 \big (\varphi(|\p(t)|) +r \big )\, \diff t\nonumber \\
			&= \varphi'(|\p(0)|) \cdot |\p(0)| - \varphi(|\p(0)|) -r,\label{eqparasite}
\end{align}
where we have used the fact that $t\mapsto |\p(t)|$ is constant. Now set 
\begin{equation}
r_0 := 1+ \max_{\rho\leq 2\rho_1} |\varphi'(\rho)\cdot \rho - \varphi(\rho)|
\label{b0}
\end{equation}
and observe that, for all $r\geq r_0$ and all fake closed geodesics  we have
$\A_r(\q,\p) \leq -1$, for $|\p(0)|\leq 2\rho_1$. Summarizing, we have shown the following

\begin{lem}
There exists $r_0>0$ such that for all $r\geq r_0$ critical points of $\A_r$ of non-negative action are either constants or closed geodesics, or are 
contained in $\Sigma_\sigma$ for some $\sigma\in [-\delta,\delta]$.
\label{lem:criticalpointsforlarger}
\end{lem}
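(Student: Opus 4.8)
\textit{Proof proposal.} The statement is a summary of the analysis carried out just above, so the plan is to organise that analysis into a single case distinction according to the energy level $H_r^{-1}(a)$ containing a given critical point $(\q,\p)$ of $\A_r$. Since $H_r\ge 0$ everywhere, there is nothing to check for $a<0$. For $a=0$ and $a=r$ the level set is a region on which $X_{H_r}$ vanishes identically, so by the observation that critical points on non-regular levels are constant, the only critical points there are constant loops, and a constant loop has action $\A_r=-H_r\le 0$. For $0<a<r$ one has $H_r^{-1}(a)=\Sigma_\sigma$ with $\sigma\in(-\delta,\delta)$ the unique value satisfying $\chi(\sigma)=a/r$, so every critical point on such a level is contained in some $\Sigma_\sigma$, $\sigma\in[-\delta,\delta]$, which is the third alternative in the statement. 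For $a\ge r+2\rho_1^2$ one has $H_r=\tfrac12|p|_q^2+r$ on $H_r^{-1}(a)$, hence the Hamilton equations reduce to the geodesic equation and, by \eqref{eqgeod}, the critical point is of the form $(\gamma,\dot\gamma)$ with $\gamma$ a closed geodesic — the second alternative.

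It then remains to treat the intermediate regular levels $r<a<r+2\rho_1^2$, which carry the fake closed geodesics, and this is the only place where the largeness of $r$ is used. Along such an orbit $t\mapsto|\p(t)|$ is constant by the second equation in \eqref{parasitenc}, and moreover $|\p(0)|\in(\rho_1,2\rho_1)$: indeed $\varphi(|\p(0)|)=a-r\in(0,2\rho_1^2)$ while $\varphi$ is increasing on $(\rho_1,+\infty)$ with $\varphi(\rho)=\tfrac12\rho^2\ge 2\rho_1^2$ for $\rho\ge 2\rho_1$. Substituting into the action formula \eqref{eqparasite} and invoking the definition \eqref{b0} of $r_0$, we get for every $r\ge r_0$
$$\A_r(\q,\p)=\varphi'(|\p(0)|)\,|\p(0)|-\varphi(|\p(0)|)-r\le (r_0-1)-r\le -1<0,$$
so fake closed geodesics never have non-negative action once $r\ge r_0$. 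Collecting the cases proves the lemma.

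The only genuinely substantive step is this last exclusion of the fake closed geodesics; everything else is bookkeeping about the level sets of $H_r$. If one wants to be fully careful, the mildly delicate point is the claim that $X_{H_r}$ really does vanish on \emph{all} of $H_r^{-1}(0)$ and $H_r^{-1}(r)$ — in particular along the sphere bundle $\{|p|_q=\rho_1\}$ and along $\Sigma_{\pm\delta}$ — which is ensured by the flatness $\varphi'(\rho_1)=0$ and $\chi'(\pm\delta)=0$ built into the construction of $\varphi$ and $\chi$.
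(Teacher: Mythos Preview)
Your proof is correct and follows exactly the same route as the paper: the lemma is stated there as a summary of the preceding discussion, and your case distinction by energy level $a$ reproduces that discussion. You even add two details the paper leaves implicit --- the verification that $|\p(0)|\in(\rho_1,2\rho_1)$ for a fake closed geodesic, and the remark that $\chi'(\pm\delta)=\varphi'(\rho_1)=0$ guarantees the vanishing of $X_{H_r}$ on the boundary levels --- which are helpful but not essential.
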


We end this section showing that Theorems 1.2 and \ref{thm:3} immediately follow from 
\begin{thm}
Let $\Sigma\subset TM$ be a compact regular $\mathbb O_M$-separating hypersurface, $\Psi$ be a thickening of $\Sigma$. 
Then, for every $r>0$ there exists a non-constant critical point $(\q_r,\p_r)$ of $\A_r$ with $\A_r(\q_r,\p_r)\in [0,\alpha]$, where $\alpha=\alpha(\Psi)>0$ is 
some constant. Moreover, the function 
$r\mapsto \A_r(\q_r,\p_r)$ is continuous and non-increasing.
\label{thm:4}
\end{thm}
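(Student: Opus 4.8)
The plan is to produce $(\q_r,\p_r)$ as a linking critical point of the strongly indefinite functional $\A_r$, exploiting that $\A_r$ is of class $C^{1,1}$ (Lemma~\ref{lem:compactness}) and satisfies the Palais--Smale condition: indeed $H_r(q,p)=\frac12|p|_q^2+r$ outside the fixed compact set $\{|p|_q\le 2\rho_1\}$, so Theorem~\ref{thm:1} applies to $\A_r=\A_{H_r}$. One works either in a chart of $\M^{1-s}$ around a constant loop $(q_0,0)$, $q_0\in M$, or, more conveniently, after the embedding of $\M^{1-s}$ into $\mathcal E=H^s(S^1,\R^N)\times H^{1-s}(S^1,\R^N)$ together with an extension of $H_r$ to a nonnegative Hamiltonian on $\R^{2N}$ which is quadratic at infinity and vanishes near the zero section (as in Section~\ref{section:2}). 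On the resulting Hilbert space the symmetric bilinear form $b(\q,\p):=\langle\dot\q,\p\rangle$ has a spectral gap at $0$: $b>0$ on an infinite--dimensional subspace $E^+$, $b<0$ on an infinite--dimensional subspace $E^-$, and $E^0=\ker b$ is finite--dimensional (the constant loops); moreover $\A_r$ is the sum of the quadratic form $z\mapsto b(z,z)$ and a perturbation with compact differential (Lemma~\ref{lem:compactness}) whose value is $-\int_0^1 H_r\,\diff t\le 0$. This is the situation to which the linking theorems for strongly indefinite functionals apply.

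As ``dual set'' I would take a small sphere $S=\{z\in E^+\,:\,\|z\|=\rho\}$: for $\rho$ small all the loops it parametrizes lie in the region $B$, where $H_r\equiv 0$, so $\A_r|_S=b|_S\ge\beta$ with $\beta>0$ depending only on the spectral gap of $b$ and on $B$ --- hence on $\Psi$, but \emph{not} on $r$. As ``sweeping set'' I would take a bounded set $\Sigma_0$ of Benci--Rabinowitz type ($E^0\oplus E^-$ together with a suitably truncated ray in $E^+$) chosen so that $\Sigma_0$ links $S$ and $\A_{H_0}\le 0$ on the relative boundary $\partial\Sigma_0$, where $H_0:=\lim_{r\downarrow 0}H_r$. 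The observations that make this choice legitimate for \emph{all} $r$ at once are: $H_0(q,p)=\varphi(|p|_q)$ still satisfies the standing growth condition, $H_0\ge 0$, and $H_r\ge H_0$ for every $r\ge 0$; hence $\A_r\le\A_{H_0}\le 0$ on $\partial\Sigma_0$ for every $r\ge 0$, and $\sup_{\Sigma_0}\A_r\le\sup_{\Sigma_0}\A_{H_0}=:\alpha$, a constant depending only on $\Psi$ and on the fixed auxiliary data $g,\chi,\varphi$.

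Granting this geometry, the linking theorem --- applicable \emph{directly} to $\A_r$ because it satisfies Palais--Smale --- yields that
$$c_r:=\inf_h\,\sup_{z\in\Sigma_0}\A_r\big(h(z)\big),$$
the infimum being over the admissible deformations restricting to the identity on $\partial\Sigma_0$, is a critical value of $\A_r$ with $\beta\le c_r\le\alpha$. Since $\partial_r H_r\in[0,1]$ pointwise, one has $|\A_r(z)-\A_{r'}(z)|\le|r-r'|$ for every $z$ and $\A_r\le\A_{r'}$ whenever $r\ge r'$; as the admissible class in the definition of $c_r$ is the same for all $r$ (it refers only to $\partial\Sigma_0$, on which $\A_r\le 0$ for every $r$), it follows that $r\mapsto c_r$ is non--increasing and $1$--Lipschitz. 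Choosing $(\q_r,\p_r)$ to be any critical point of $\A_r$ with $\A_r(\q_r,\p_r)=c_r$ then concludes the proof: it is non--constant, since every constant loop has action $\le 0<\beta\le c_r$, and $r\mapsto\A_r(\q_r,\p_r)=c_r\in[\beta,\alpha]\subset[0,\alpha]$ is continuous and non--increasing.

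I expect the main obstacle to be the construction of the sweeping set $\Sigma_0$ with all three required properties --- linking $S$, $\A_{H_0}\le 0$ on $\partial\Sigma_0$, and $\sup_{\Sigma_0}\A_{H_0}<\infty$ --- for the strongly indefinite functional. On the ``outer'' part of $\partial\Sigma_0$ the indefinite form $b$ is large and positive and has to be dominated by $\int_0^1 H_0\,\diff t$, the only term there with a definite sign; since the growth of $H_0$ is merely quadratic and only in the fibre direction, keeping this balance is a delicate point, and is precisely where most of the work is concentrated in the original argument of Hofer--Viterbo. What the present functional setting simplifies is everything around it: because $\A_r$ itself satisfies the Palais--Smale condition (Theorem~\ref{thm:1}), the linking theorem can be applied directly to $\A_r$, so that neither the regularized approximations of $\A_r$ nor the ensuing diagonal limit --- the bulk of the argument in the non--compact case $s=1$ treated by Hofer--Viterbo --- is needed.
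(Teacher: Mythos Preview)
There is a genuine gap: the Benci--Rabinowitz linking scheme you invoke requires a Hilbert \emph{space}, whereas $\M^{1-s}$ is a Hilbert bundle over the nonlinear base $H^s(S^1,M)$, and neither of your two reductions closes this. Working in a single chart around a constant loop fails because the sweeping set $\Sigma_0$ must be large (you need $\A_{H_0}\le 0$ on its relative boundary, which forces $\|\p\|$ large) and the negative gradient flow will not remain in any fixed chart. The embedding into $\mathcal E=H^s(S^1,\R^N)\times H^{1-s}(S^1,\R^N)$ with an extended Hamiltonian on $\R^{2N}$ is more problematic: critical points of the \emph{extended} functional on $\mathcal E$ are one-periodic orbits of the extended Hamiltonian on $\R^{2N}$, and there is no reason for them to lie on $TM\subset\R^{2N}$; moreover Theorem~\ref{thm:1} gives Palais--Smale on $\M^{1-s}$, not on $\mathcal E$. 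Thus the critical point your linking argument would produce need not be a critical point of $\A_r$ on $\M^{1-s}$ at all.

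The paper's argument is designed precisely to handle this nonlinearity. Instead of a linear splitting $E^+\oplus E^0\oplus E^-$, it exploits the bundle structure: the role of the sweeping set is played by a full fibre $\pi^{-1}(C)$ over a compact $C\subset H^1(S^1,M)$, and the role of the dual set by the zero section $\mathbb O_{H^s}$. The non-displacement of $\pi^{-1}(C)$ from $\mathbb O_{H^s}$ under the flow is not obtained by linear linking but via a representation lemma for the flow (Lemma~\ref{replemma}) combined with Dold's fixed-point transfer in cohomology (Proposition~\ref{intersectionproposition}). A consequence is that the minimax value $\theta(r)$ is only guaranteed to be $\ge 0$, not $\ge\beta>0$ as in your scheme, so ruling out constant critical points requires a further topological input: a non-contractible loop when $\pi_1(M)\neq 0$, and, for simply connected $M$, nontrivial rational cohomology of the free loop space in degree larger than $\dim M$ (Lemma~\ref{lem:existencensc}). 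The balance on $\partial\Sigma_0$ that you flag as the main obstacle is indeed delicate in the flat case, but the obstruction you missed is earlier and more fundamental: the linking has to be set up intrinsically on the bundle $\M^{1-s}$, and this is exactly what Proposition~\ref{intersectionproposition} accomplishes.
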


\begin{proof}[Proof of Theorem \ref{thm:3}]
Let $r_0$ be given by \eqref{b0}. By Lemma \ref{lem:criticalpointsforlarger} we can assume that all the critical points of $\A_r$, $r\geq r_0$, are closed geodesics with
$$\A_r(\q_r,\p_r) =  \E(\q_r)-r.$$
Since $g$ was chosen to be bumpy, by Lemma \ref{lembumpy} the set 
of critical values of $\E$ is discrete, and hence 
$$\A_r(\q_r,\p_r) + r = \E(\q_r) = \mathrm{const.} \quad \forall r\geq r_0.$$
However, this would imply that $\A_r(\q_r,\p_r)<0$ for $r$ large enough. Therefore, there exists $R\geq r_0$ 
such that $(\q_{R},\p_R)$ is a critical point for $\A_{R}$ lying in $\Sigma_\sigma$ for some $\sigma\in [-\delta,\delta]$. If $(\q_R,\p_{R})(\R)\subset \Sigma$ then we are done. Otherwise we claim that
$$\inf \big \{r\geq r_0 \ \big |\ (\q_r,\p_r) \in \Sigma_\sigma, \ \text{for some}\ \sigma\in [-\delta,\delta]\big \} \leq \alpha+r_0,$$
where $\alpha$ is the constant given by Theorem \ref{thm:4}. Indeed, for all $r\geq r_0$ smaller than the infimum above we have that 
$(\q_r,\p_r)$ is a closed geodesic and hence, using the uniform boundedness of $r\mapsto \A_r(\q_r,\p_r)$ and Lemma \ref{lembumpy}, we obtain 
$$\A_r(\q_r,\p_r)+r = \E(\q_r) = \E(\q_{r_0}) = \A_{r_0}(\q_{r_0}, \p_{r_0})+r_0 \leq \alpha + r_0$$
which implies that 
$$r\leq \alpha + r_0-\A_r(\q_r,\p_r) \leq \alpha+r_0.$$
In particular, we can find $R\leq \alpha+2r_0$ such that $(\q_{R},\p_R)$ lies in $\Sigma_\sigma$ for some $\sigma\in [-\delta,\delta]$. This yields
\begin{align}
\Big |\langle \dot \q_R,\p_R \rangle \Big |&= \Big |\A_R(\q_R,\p_R) - \int_0^1 H_R(\q_R(t),\p_R(t))\, \diff t\Big |\nonumber \\
						&= \Big |\A_R(\q_R,\p_R) - H_R(\q_R(0), \p_R(0))\Big |\nonumber \\
						&\leq \underbrace{\Big |\A_R(\q_R,\p_R)\Big |}_{\leq \alpha} + \underbrace{\Big |H_R(\q_R(0),\p_R(0))\Big |}_{\leq R\leq \alpha+2r_0} \leq 2 (\alpha+r_0). \label{estimatesymp}
\end{align}
The claim follows now by recursively choosing  $\delta>0$ such that
$$(\q_R,\p_R)(\R) \not \subset \Psi([-\delta,\delta]\times \Sigma).$$
Observe that \eqref{estimatesymp} yields the desired uniform estimate on the symplectic action of the sequence of closed characteristic leaves, for 
$$\langle \dot \q_R,\p_R\rangle =\int_{P_R}\lambda,$$
where $P_R$ is the characteristic leaf determined by $(\q_R,\p_R)$.
\end{proof}

The proof of Theorem 1.2 given Theorem \ref{thm:3} is standard, however we include it here for completeness. 

\begin{proof}[Proof of Theorem \ref{thm:2}]
Let $Y$ be a Liouville vector field
on a neighborhood of $\Sigma$ such that 
$Y \pitchfork \Sigma$, and let $\varphi^\sigma$ be its flow.
Since $\Sigma$ is compact, the map 
$$\Psi:(-a,a)\to T^*M,\quad (\sigma,x) \mapsto \varphi^\sigma(x),$$
is a diffeomorphism onto an open precompact neighborhood $U$ of $\Sigma$, for $a>0$ sufficiently small. From $\mathcal L_Y \omega=\omega$ we have that 
$$\ds (\varphi^\sigma)^* \omega  = (\varphi^\sigma)^* \mathcal L_Y \omega = (\varphi^\sigma)^* \omega$$
and hence, since $(\varphi^0)^*=\text{id}$, we conclude that $(\varphi^\sigma)^*\omega=e^\sigma\omega$. Assume now that $v\in \ell_\Sigma(x)$; then 
for all $w\in T_x\Sigma$ we have 
\begin{align*}
0 = \omega(v,w) = e^\sigma \, \omega(v,w) = (\varphi^\sigma)^* \omega(v,w) = \omega ( T\varphi^\sigma(x)[v],T\varphi^\sigma(x)[w]).
\end{align*}
Since $\varphi^\sigma$ is a diffeomorphism we conclude that $T\varphi^\sigma(x)[v]\in \ell_{\Sigma_\sigma(\varphi^t(x))}$. Therefore, $T\varphi^\sigma:\ell_\Sigma \to \ell_{\Sigma_\sigma}$ is an
isomorphism of line bundles; in particular, $\varphi^\sigma$ induces a one-to-one correspondence $P\mapsto \varphi^\sigma(P)$ between $\mathcal P(0)$ and $\mathcal P(\sigma)$ for all $\sigma\in (-a,a)$.
The claim follows now from Theorem \ref{thm:3}.
\end{proof}

\begin{rmk}
An hypersurface $\Sigma \subset T^*M$ for which a thickening as in the proof above exists is called \textit{stable}. Obviously, Theorem \ref{thm:2} extends to compact stable hypersurfaces which are $\mathbb O_M$-separating. It is worth noticing that the stability condition is in general weaker than the contact condition, see e.g. \cite{Cieliebak:2010zt}.
\end{rmk}


\section{Proof of Theorem \ref{thm:4}}

In this section we prove Theorem \ref{thm:4}. The proof is based on two key ingredients: one is essentially the Palais-Smale condition for the functional $\A_r$, the other is the fact that 
we have a transfer homomorphism in cohomology for the negative gradient flow of $\A_r$, as we now show. Hereafter we suppose that $r>0$ is fixed.


\subsection{The key propositions.} We start recalling the minimax lemma for the Hamiltonian action $\A_r$. The proof follows from the Palais-Smale condition for $\A_r$ by standard arguments and will be omitted.

\begin{prop}\label{deformationproposition}
Suppose that $\U\subset \M^{1-s}$ is an open neighborhood of 
$$\crit (\A_r) \cap \A_r^{-1}(a), \quad a\in \R.$$
Then there exist $\epsilon >0$ and $t_0>0$ such that the following holds: 
for every $t\geq t_0$
$$\phi^t_r \big (\{\A_r \leq a+\epsilon\}\setminus \U\big ) \subset \{ \A_r \leq a-\epsilon\},$$
where $\phi^t_r$ denotes the time-$t$-flow of $\displaystyle -\frac{\mathrm{grad}\, \A_r}{\sqrt{1+\|\mathrm{grad}\, \A_r\|^2}}.$\qed
\end{prop}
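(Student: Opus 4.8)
The plan is to run the classical deformation argument for the flow of the normalized antigradient; the only substantial input is the Palais--Smale condition for $\A_r$. First I would note that $\A_r$ itself satisfies Palais--Smale: the Hamiltonian $H_r$ equals $\tfrac12|p|_q^2+r$ on $\{|p|_q\geq 2\rho_1\}$, hence satisfies the hypotheses of Theorem~\ref{thm:1}, so Proposition~\ref{prop:ps} applies. A first consequence is that $K:=\crit(\A_r)\cap\A_r^{-1}(a)$ is compact: any sequence in it is a Palais--Smale sequence, hence subconverges to a point of $K$. Next I would set up the flow. By Lemma~\ref{lem:compactness} the functional $\A_r$ is $C^{1,1}$, so its $\langle\cdot,\cdot\rangle_{\M^{1-s}}$-gradient is locally Lipschitz and the bounded field $X_r:=-\grad\A_r/\sqrt{1+\|\grad\A_r\|^2}$ generates a local flow $\phi^t_r$; since $\|X_r\|<1$, forward orbits have speed bounded by $1$ and stay Cauchy on bounded time intervals, so, $\M^{1-s}$ being metrically complete, $\phi^t_r$ is defined for all $t\geq 0$. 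Differentiating gives $\tfrac{\diff}{\diff t}\A_r(\phi^t_r(x))=-\|\grad\A_r\|^2/\sqrt{1+\|\grad\A_r\|^2}\leq 0$, so $\A_r$ is non-increasing along orbits and every sublevel set is positively invariant.

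The heart of the matter is a quantitative gradient bound near level $a$, extracted from Palais--Smale. Since $K$ is compact and $\U\supset K$ is open, I would fix $\rho>0$ with the closed $2\rho$-neighbourhood of $K$ contained in $\U$, and set $\U':=\{x:\dist(x,K)<\rho\}$, so that $\overline{\U'}\subset\U$ and $\dist(\M^{1-s}\setminus\U,\U')\geq\rho$. I then claim there exist $\delta,\kappa>0$ with $\|\grad\A_r(x)\|\geq\kappa$ whenever $|\A_r(x)-a|\leq\delta$ and $x\notin\U'$; otherwise a sequence violating this would be a Palais--Smale sequence at level $a$, hence would subconverge to a critical point of $K\subset\U'$, contradicting that its terms lie in the open set $\M^{1-s}\setminus\U'$.

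To conclude I would choose the constants explicitly: put $c:=\kappa^2/\sqrt{1+\kappa^2}>0$, take $\epsilon>0$ with $\epsilon<\delta$ and $\epsilon<c\rho/2$, and $t_0:=2\epsilon/c$, so $t_0<\rho$. Let $x\in\{\A_r\leq a+\epsilon\}\setminus\U$. If $\A_r(x)\leq a-\epsilon$ the conclusion follows from positive invariance of $\{\A_r\leq a-\epsilon\}$. If $\A_r(x)\in(a-\epsilon,a+\epsilon]$, then for $t\in[0,t_0]$ the point $\phi^t_r(x)$ has travelled a distance $\leq t_0<\rho$ away from $x\in\M^{1-s}\setminus\U$, hence lies outside $\U'$; so, as long as the orbit also has action $>a-\epsilon$, it lies in $\A_r^{-1}([a-\delta,a+\delta])\setminus\U'$, where the action decreases at rate at least $c$. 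Were this the case throughout $[0,t_0]$, we would get $\A_r(\phi^{t_0}_r(x))\leq(a+\epsilon)-ct_0=a-\epsilon$, a contradiction; hence $\A_r(\phi^{t_1}_r(x))\leq a-\epsilon$ for some $t_1\in[0,t_0]$, and by monotonicity $\A_r(\phi^t_r(x))\leq a-\epsilon$ for all $t\geq t_1$, in particular for all $t\geq t_0$. This is the asserted inclusion.

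The point I expect to need the most care is the interplay between the two neighbourhoods $\U'\subset\U$ in the last step: one must guarantee that an orbit issuing from outside $\U$ cannot reach $\U'$ (where the gradient bound must fail, since $\U'\supset K$) before it has been pushed below level $a-\epsilon$. This is exactly why normalizing the antigradient to have speed $<1$ is convenient: the descent rate $c$ is a fixed positive constant, so choosing $\epsilon$, and hence $t_0$, small enough decouples the descent time from the time it would take to drift into $\U'$. The only other point requiring a word is metric completeness of $\M^{1-s}$, used to exclude finite-time escape of $\phi^t_r$; this is standard for manifolds of Sobolev loops.
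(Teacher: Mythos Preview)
Your proposal is correct and is precisely the standard deformation argument the paper has in mind; the paper itself omits the proof, simply noting that it ``follows from the Palais--Smale condition for $\A_r$ by standard arguments.'' Your write-up fills in exactly those standard steps (compactness of the critical set at level $a$, the quantitative gradient lower bound outside $\U'$ coming from Palais--Smale, the choice of $\epsilon$ and $t_0$ via the normalized decay rate $c=\kappa^2/\sqrt{1+\kappa^2}$), and the handling of the two nested neighbourhoods $\U'\subset\U$ together with the unit-speed bound on the normalized flow is the right way to prevent the orbit from reaching $\U'$ before its action has dropped below $a-\epsilon$.
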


In what follows $C$ is an arbitrary compact subset of $H^1(S^1,M)\subset H^s(S^1,M)$. This  implies that  
$$\sup_{\pi^{-1}(C)} \A_r \leq \alpha,\quad \forall r>0,$$
where with slight abuse of notation we denote the bundle projection $\pi_{1-s}:\M^{1-s}\to H^s(S^1,M)$ with $\pi$. Here, $\alpha>0$ is some constant independent of $r$. Indeed, by construction we have 
$$H_r(q,p)\geq H_0(q,p) \geq \frac12 |p|_q^2 - \beta$$
for some constant $\beta>0$, and hence on $\pi^{-1}(C)$ we obtain
$$\A_r(\q,\p) \leq \langle \dot \q, \p\rangle - \frac 12 \|\p\|^2+\beta \leq c \|\p\| - \frac 12 \|\p\|^2+\beta \leq \sup_{\p \in \pi_{1-s}^{-1}(C)} \Big (c \|\p\| - \frac 12 \|\p\|^2 + \beta \Big ) =:\alpha.$$
Notice that if $C$ were compact in $H^s(S^1,M)$ but unbounded in $H^1(S^1,M)$ then the supremum above would be infinite. 
Since $\A_r$ satisfies the Palais-Smale condition, we can find $\epsilon>0$ and $\gamma>0$ such that 
$$\frac{\| \mathrm{grad}\, \A_r\| }{\sqrt{1+\|\mathrm{grad}\, \A_r\|^2}}\geq \epsilon, \quad \text{on}\ \{ \|\p\|_{1-s}\geq \gamma\} \cap \A_r^{-1}([0,\alpha]).$$
Therefore, for $\gamma':=\gamma + \frac{\alpha}{\epsilon^2}+1$ we have that 
$$\phi^t_r \Big (\pi^{-1}(C) \cap \{ \|\p\|_{1-s}\geq \gamma'\}\Big ) \cap \mathbb O_{H^s}=\emptyset,\quad \forall t\geq 0,$$
where $\mathbb O_{H^s}$ denotes the zero-section of $\M^{1-s}\to H^s(S^1,M)$. Indeed, let $(\q,\p)\in \pi^{-1}(C) \cap \{ \|\p\|_{1-s}\geq \gamma'\}$; 
then by the assumption on $\gamma'$, $\phi^t_r(\q,\p)\in H^s(S^1,M)\cap \{\|\p\|_{1-s}\geq \gamma\}$ for $t\in [0,\frac{\alpha}{\epsilon^2}+1]$, hence in particular 
is not contained in $\mathbb O_{H^s}$, and for $t>\frac{\alpha}{\epsilon^2}+1$ we have 
\begin{align*}
\A_r(\phi^t_r(\q,\p))-\alpha &\leq \A_r(\phi^t_r(\q,\p))-\A_r (\q,\p) \\
 					&= \int_0^t \frac{\diff }{\diff \sigma} \big (\A_r(\phi^\sigma_r(\q,\p))\big )\, \diff t\\
						&= - \int_0^t \frac{\|\text{grad}\, \A(\phi^\sigma_r(\q,\p))\|^2}{\sqrt{1+\|\text{grad}\, \A(\phi^\sigma_r(\q,\p))\|^2}}\, \diff t\\
						&\leq - \int_0^t \epsilon^2 \, \diff t\\
						&< - \Big (\frac{\alpha}{\epsilon^2}+1\Big )\epsilon^2	\\
						&= - \alpha - \epsilon^2,
\end{align*}
that is, $\A_r(\phi^t_r(\q,\p))<0$. For a given $t_0>0$ we pick a cut-off function $\varphi : [0,+\infty) \to [0,1]$ such that
$$\varphi\Big |_{[0,\gamma'+1]} \equiv 1,\quad \varphi\Big |_{[\gamma'',+\infty)} \equiv 0,$$
for some $\gamma''>\gamma'+1$ such that 
$$\phi^t_r\Big ( \pi^{-1}(C) \cap \{\|\p\|_{1-s}\leq \gamma'+1\}\Big ) \subset \{\|\p\|_{1-s}< \gamma''\}, \quad \forall t\in [0,t_0],$$
and consider the truncated normalized negative gradient vectorfield
$$V_r(\q,\p):= - \varphi(\|\p\|_{1-s})\cdot \frac{\mathrm{grad}\, \A_r(\q,\p)}{\sqrt{1+\|\mathrm{grad}\, \A_r(\q,\p)\|^2}}.$$
With a slight abuse of notation we denote the flow of $V_r$ again with $\phi^t_r$.
The next proposition states that $\phi^{t_0}_r$ induces a transfer homomorphism in 
cohomology; in particular, $\pi^{-1}(C)$ is not displaced from $\mathbb O_{H^s}$ by $\phi^{t_0}_r$. 
This represents the analogue of the intersection proposition \cite[Proposition 1]{Hofer:1988} in our setting; we also refer to \cite[Chapter 3, Lemma 10]{Hofer:1994bq} for an analogous statement in the linear setting. 
In what follows, $H^*$ denotes the Alexander-Spanier cohomology with coefficients in some given commutative ring.

\begin{prop}\label{intersectionproposition}
There exists an injective group homomorphism $\beta_{t_0}$ such that the following diagram commutes
$$\xymatrix{
H^*\big (\phi^{t_0}_r(\pi^{-1}(C))\cap \mathbb O_{H^s}\big ) & & \\
H^*(H^s(S^1,M)) \ar[u]^{\big (\pi \big |_{\phi^{t_0}_r(\pi^{-1}(C))\cap \mathbb O_{H^s}}\big )^*} \ar[rr]^{\imath^*} & & H^*(C)\ar[ull]_{\beta_{t_0}}
}
$$
where $\imath:C\to H^s(S^1,M)$ denotes the canonical inclusion. In particular, if $C\neq \emptyset$ then 
\begin{equation*}
 \phi^{t_0}_r(\pi^{-1}(C))\cap \mathbb O_{H^s}\neq \emptyset. \end{equation*}
\end{prop}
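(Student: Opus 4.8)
The strategy is to build the transfer homomorphism $\beta_{t_0}$ by pulling back the diffeomorphism-type correspondence supplied by the flow $\phi^{t_0}_r$ of the truncated normalized gradient field $V_r$. The starting observation is that $V_r$ is a bounded vector field (its norm is at most $1$), compactly supported in the fiber direction inside $\{\|\p\|_{1-s}\le \gamma''\}$, so that its flow $\phi^{t}_r$ is globally defined for all $t$ and each $\phi^{t}_r$ is a homeomorphism of $\M^{1-s}$. First I would record the key property established just above the proposition: for the given $t_0$, since $\varphi\equiv 1$ on $\{\|\p\|_{1-s}\le \gamma'+1\}$ and the trajectories starting from $\pi^{-1}(C)\cap\{\|\p\|_{1-s}\le\gamma'+1\}$ stay inside $\{\|\p\|_{1-s}<\gamma''\}$ for $t\in[0,t_0]$, these trajectories are genuine (un-truncated) normalized negative gradient trajectories; and the trajectories starting from $\pi^{-1}(C)\cap\{\|\p\|_{1-s}\ge\gamma'\}$ never meet $\mathbb O_{H^s}$. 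Consequently
$$\phi^{t_0}_r(\pi^{-1}(C))\cap \mathbb O_{H^s}=\phi^{t_0}_r\big(\pi^{-1}(C)\cap\{\|\p\|_{1-s}\le\gamma'\}\big)\cap \mathbb O_{H^s},$$
so it suffices to understand the flow on the fiberwise-bounded, hence (by Lemma \ref{lem:bdd}-type estimates and Lemma \ref{lem:aa}) relatively compact, piece $\pi^{-1}(C)\cap\{\|\p\|_{1-s}\le\gamma'\}$.

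Next I would set $A:=\pi^{-1}(C)\cap\{\|\p\|_{1-s}\le\gamma'\}$ and $B:=\phi^{t_0}_r(A)\cap \mathbb O_{H^s}$, and define the candidate maps. Composition $\phi^{t_0}_r\colon A\to \phi^{t_0}_r(A)$ is a homeomorphism, and restricting along $B\subset\phi^{t_0}_r(A)$ gives a closed subset $A_0:=(\phi^{t_0}_r)^{-1}(B)\subset A$ together with a homeomorphism $h:=\phi^{t_0}_r|_{A_0}\colon A_0\xrightarrow{\ \cong\ } B$. The inclusion $C\hookrightarrow H^s(S^1,M)$ admits the zero-section $s_0\colon C\to \pi^{-1}(C)$, $s_0(\q)=(\q,0)$, which lands inside $A$ (since $\|0\|_{1-s}=0\le\gamma'$), so $s_0(C)\subset A$ and in fact $s_0(C)\subset A$ is exactly the part of $\mathbb O_{H^s}$ lying over $C$. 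Now define
$$\beta_{t_0}:=\big(\pi|_{B}\big)^{*,-1}\circ \text{(naturality iso)}\ \colon\ H^*(C)\longrightarrow H^*(B),$$
more precisely: the composite $C\xrightarrow{s_0}\pi^{-1}(C)\xrightarrow{\ \simeq\ }\pi^{-1}(C)$ shows $\pi\circ s_0=\mathrm{id}_C$, so on cohomology $s_0^*\circ\pi^*=\mathrm{id}$; one takes $\beta_{t_0}$ to be the map induced on Alexander-Spanier cohomology by the correspondence "$x\in C\mapsto \phi^{t_0}_r(x,0)$ when the latter lies in $\mathbb O_{H^s}$", made precise via the closed subspace $C_0:=\{\q\in C: \phi^{t_0}_r(\q,0)\in\mathbb O_{H^s}\}$ and the homeomorphism $C_0\to B\cap \mathbb O_{H^s}$ induced by $\phi^{t_0}_r$ on the zero section. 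The commutativity of the triangle is then the statement that $\big(\pi|_{B}\big)^*$ followed by $\beta_{t_0}^{-1}$-type comparison equals $\imath^*$; this is checked by diagram chasing, using naturality of $H^*$ for the square relating $(\pi,\imath)$ on $\pi^{-1}(C)$ and $C$, together with the fact that $\phi^{t_0}_r$ is homotopic to the identity through the homeomorphisms $\phi^{t}_r$ and hence acts trivially where that homotopy stays within the relevant subspaces.

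The injectivity of $\beta_{t_0}$, and with it the nonemptiness conclusion, is where the real content lies, and I expect this to be the main obstacle. The point is the analogue of Hofer-Viterbo's intersection lemma: one must show that the zero section is \emph{not} displaced off itself over $C$ by $\phi^{t_0}_r$, i.e. $C_0\ne\emptyset$ whenever $C\ne\emptyset$, and more strongly that $\imath^*$ factors through the (finitely generated, compactly supported) cohomology of the displaced image. I would argue this by a \v{C}ech/Alexander-Spanier continuity argument: $B=\bigcap_{t}$-type nested intersections are replaced by the observation that $\phi^{t_0}_r(A)$ is compact and $\mathbb O_{H^s}\cong H^s(S^1,M)$ is closed, so $B$ is compact; then one exhibits a homotopy (through the flow $\phi^{t}_r$, $t\in[0,t_0]$, composed with the fiberwise linear retraction of a neighborhood of $\mathbb O_{H^s}$ onto $\mathbb O_{H^s}$) showing that the composite $H^*(H^s(S^1,M))\xrightarrow{\imath^*}H^*(C)\xrightarrow{\beta_{t_0}}H^*(B)\xrightarrow{(\pi|_B)^{*-1}}H^*$ recovers the restriction, whence $\imath^*$ is a retract of an isomorphism and $\beta_{t_0}$ must be injective. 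The delicate part is ensuring all the homotopies stay inside the region where the truncation $\varphi\equiv 1$, so that $\phi^{t}_r$ is genuinely the gradient flow and the action estimate $\A_r\le\alpha$ on $\pi^{-1}(C)$ (together with the lower bound forcing trajectories with large fiber norm to descend below $0$) keeps everything in a fixed bounded, relatively compact set where Alexander-Spanier cohomology behaves continuously. If $C\ne\emptyset$, applying the injective $\beta_{t_0}$ to $1\in H^0(C)$ forces $H^0(B)\ne 0$, hence $B=\phi^{t_0}_r(\pi^{-1}(C))\cap\mathbb O_{H^s}\ne\emptyset$.
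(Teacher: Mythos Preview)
Your proposal has a genuine gap: it misses the essential analytic ingredient, the Representation Lemma, and the topological tool that exploits it, Dold's fixed point transfer.

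The argument you sketch rests on two false or unjustified claims. First, you assert that $A=\pi^{-1}(C)\cap\{\|\p\|_{1-s}\le\gamma'\}$ is relatively compact (``by Lemma \ref{lem:bdd}-type estimates and Lemma \ref{lem:aa}''), and later that $\phi^{t_0}_r(A)$ is compact. But the fibre $H^{1-s}(\q^*TM)$ is infinite-dimensional, and a closed bounded ball in it is \emph{not} compact; Lemmas \ref{lem:aa} and \ref{lem:bdd} concern convergence of the base loop and boundedness of Palais--Smale sequences, not compactness of fibre-balls. Second, and more seriously, you try to obtain non-displacement of $\mathbb O_{H^s}$ from the fact that $\phi^{t_0}_r$ is homotopic to the identity through the flow $\phi^t_r$. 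In infinite dimensions this is simply not enough: a homeomorphism isotopic to the identity can displace any given closed subset (think of translations in a Hilbert space, or the contractibility of the unit sphere). Your construction of $\beta_{t_0}$ via $C_0=\{\q\in C:\phi^{t_0}_r(\q,0)\in\mathbb O_{H^s}\}$ is circular: $C_0$ could a priori be empty, which is exactly what must be excluded, and even if nonempty you only get a map $H^*(B)\to H^*(C_0)$, not $H^*(C)\to H^*(B)$.

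What the paper actually does is prove a Representation Lemma showing that, after pulling back by $L^2$-parallel transport, the flow has the form
\[
\phi^t_r(\q,\p)=a(t,(\q,\p))\cdot\jmath_{1-s}^*\dot\q + b(t,(\q,\p))\cdot\p + K(t,(\q,\p)),
\]
with $a\le 0$, $b>0$ continuous and $K$ a \emph{compact} fibre-preserving map. The intersection problem $\phi^t_r(\pi^{-1}(C))\cap\mathbb O_{H^s}\neq\emptyset$ then becomes the fixed-point equation $\p=T(t,(\q,\p))$ for a compact fibre-preserving map $T$ which vanishes at $t=0$ and outside $\{\|\p\|_{1-s}\le\gamma''\}$. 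This Leray--Schauder structure is precisely the hypothesis of Dold's fixed point transfer, which produces a homomorphism $\mathrm{tr}_t$ with $\mathrm{tr}_t\circ\pi^*=\mathrm{id}$ on $H^*(C)$; setting $\beta_t=(\phi_r^{-t})^*\circ\pi^*$ gives the injective transfer in the statement. The compact-perturbation form of the flow is the indispensable input---without it there is no reason the zero section is not displaced.
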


The rest of this subsection will be devoted to the proof of Proposition \ref{intersectionproposition}. The key ingredient of the proof will be a representation 
lemma for the flow $\phi^t_r$ analogous to \cite[Lemma 7]{Hofer:1988}. 

If we denote by $D$ the $L^2$-connection, then we readily see 
by working in local coordinates that 
$$[D_X,\nabla_{\dot \q}] Y = R(X,\dot \q) Y,$$
where $R$ denotes the Riemann curvature tensor, hence in particular is a zero-order operator. Therefore,
\begin{align*}
[D_X, - \nabla_{\dot \q}\nabla_{\dot \q}] Y &= - D_X \nabla_{\dot \q}\nabla_{\dot \q} Y + \nabla_{\dot \q}\nabla_{\dot \q} D_X Y\\
					&= -  \nabla_{\dot \q} D_X \nabla_{\dot \q} Y - R(X,\dot \q) \nabla_{\dot \q} Y +  \nabla_{\dot \q}\nabla_{\dot \q} D_XY\\
					&= - \nabla_{\dot \q}\nabla_{\dot \q} D_XY - \nabla_{\dot \q} R(X,\dot \q) Y - R(X,\dot \q) \nabla_{\dot \q} Y +  \nabla_{\dot \q}\nabla_{\dot \q} D_XY\\
					&= - \nabla_{\dot \q} R(X,\dot \q) Y - R(X,\dot \q) \nabla_{\dot \q} Y
					\end{align*}
is an operator of order 1. In particular
 $$[D_X, 1 - \nabla_{\dot \q}\nabla_{\dot \q}] = [D_X, - \nabla_{\dot \q}\nabla_{\dot \q}]$$
 is an operator of order 1. Similarly one can show that, for every $\ell\in \R$,  
 $$[D_X, (1 - \nabla_{\dot \q}\nabla_{\dot \q})^\ell] = [D_X, (1 + \nabla_{\dot \q}^*\nabla_{\dot \q})^\ell]$$
 is an operator of order at most $2\ell-1$ (c.f. \cite[Lemma 2.11]{Maeda:2015}). 

\begin{lem}[Representation Lemma]
Denote by $\sigma^t_r:= \pi\circ \phi^t_r$ the projection to $H^s(S^1,M)$ of the flow $\phi^t_r$, and by $P(t,0)$ the $L^2$-parallel transport 
along $\sigma^\cdot_r$ from $H^{1-s}((\sigma_r^0(\cdot))^*TM)$ to $H^{1-s}((\sigma_r^t(\cdot))^*TM)$.
Then,
$$\phi^t_r(\q,\p) = P(t,0)\Big [ a(t,(\q,\p)) \cdot \jmath^*_{1-s}\dot \q + b(t,(\q,\p))\cdot \p  + K(t,(\q,\p))\Big ],$$
where:
\begin{itemize}
\item $a:\R\times \M^{1-s}\to (-\infty,0]$ maps bounded sets into precompact sets and satisfies $a(0,\cdot)\equiv 0$,
\item $b:\R\times \M^{1-s}\to (0,+\infty)$ maps bounded sets into precompact sets and satisfies $b(0,\cdot)\equiv1$, and 
\item $K:\R\times \M^{1-s}\to \M^{1-s}$ is a ``compact'' fibre-preserving map such that $K(0,\cdot) \equiv 0$.
\end{itemize}
\label{replemma}
\end{lem}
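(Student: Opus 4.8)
<br>

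The plan is to write the flow in horizontal/vertical form and solve the resulting ODE by variation of constants, after conjugating by the $L^2$-parallel transport $P(t,0)$ so as to separate the genuinely non-compact part of $\mathrm{grad}\,\A_r$ from a compact remainder. First I record the standing facts. The vector field generating $\phi^t_r$ is locally Lipschitz (as $\A_r$ is $C^{1,1}$) and has $\M^{1-s}$-norm $\le1$, so its flow is complete; fixing a bounded set $B\subset\M^{1-s}$ and $T>0$, the set $\{\phi^t_r(z):z\in B,\ t\in[0,T]\}$ lies in a bounded set $B'$, on which all operators occurring below — as well as $P(t,0)$ and $P(t,0)^{-1}$ — are bounded on the relevant Sobolev spaces uniformly in $(z,t)$. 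Write $\A_r(\q,\p)=\langle\dot\q,\p\rangle-\tfrac12\|\p\|^2-\Delta_r(\q,\p)$ with $\Delta_r(\q,\p):=\int_0^1\big(H_r-\tfrac12|p|^2\big)(\q(t),\p(t))\,\diff t$; then $\diff\Delta_r$ is compact (the argument of Lemma \ref{lem:compactness} applies, as $\diff(H_r-\tfrac12|p|^2)$ has bounded support), while a computation of the gradient in the metric \eqref{metricms} gives, in horizontal/vertical components,
$$\mathrm{grad}\big(\langle\dot\q,\p\rangle-\tfrac12\|\p\|^2\big)=\Big(-\jmath_s^*\nabla_{\dot\q}\p,\ \jmath_{1-s}^*(\dot\q-\p)\Big),$$
with $\jmath_s^*=(1+\nabla_{\dot\q}^*\nabla_{\dot\q})^{-s}$ and $\jmath_{1-s}^*=(1+\nabla_{\dot\q}^*\nabla_{\dot\q})^{s-1}$ as in the proof of Lemma \ref{lem:bdd}. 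Writing $(\q_t,\p_t):=\phi^t_r(\q,\p)$, $\dot\q_t:=\partial_\tau\q_t$ for the loop velocity, and $\rho(t):=\varphi(\|\p_t\|_{1-s})(1+\|\mathrm{grad}\,\A_r\|^2)^{-1/2}\in[0,1]$, and splitting $\tfrac{\diff}{\diff t}\phi^t_r=V_r(\phi^t_r)$ into base and fibre parts, one obtains
$$\dot\sigma^t_r=\rho(t)\,\jmath_s^*\nabla_{\dot\q_t}\p_t+(\mathrm{cpt}),\qquad \tfrac{D}{\diff t}\p_t=\rho(t)\,\jmath_{1-s}^*(\p_t-\dot\q_t)+(\mathrm{cpt}),$$
where $\tfrac{D}{\diff t}$ is the $L^2$-covariant derivative along $t\mapsto\q_t$ and $(\mathrm{cpt})$ collects the fibre-preserving contributions of $-\mathrm{grad}\,\Delta_r$, which take bounded sets to precompact sets.

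The core step is the conjugation by $P(0,t)$. Put $\hat\p_t:=P(0,t)\p_t$, $\hat{\dot\q}_t:=P(0,t)\dot\q_t$, and $A_0:=(1+\nabla_{\dot\q_0}^*\nabla_{\dot\q_0})^{1/2}$. Using the order bounds recorded just before the statement — $[D_X,(1+\nabla_{\dot\q}^*\nabla_{\dot\q})^\ell]$ is of order $\le 2\ell-1$ — integrated along the orbit, one shows that for each of the exponents $\ell\in\{-s,\,s-1,\,1-s\}$ occurring below
$$P(0,t)\,(1+\nabla_{\dot\q_t}^*\nabla_{\dot\q_t})^\ell\,P(t,0)=(1+\nabla_{\dot\q_0}^*\nabla_{\dot\q_0})^\ell+(\mathrm{cpt}),$$
the error being compact between the relevant Sobolev spaces precisely because $s\in(\tfrac12,1)$ makes $2\ell-1$ lie strictly below the borderline index. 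Feeding this into the fibre equation gives $\tfrac{d}{dt}\hat\p_t=-\rho(t)A_0^{2s-2}\hat{\dot\q}_t+(\mathrm{cpt})$, the term $\rho(t)A_0^{2s-2}\hat\p_t$ being itself compact since $A_0^{2s-2}$ is smoothing. For the base variable, the torsion-free identity $\tfrac{D}{\diff t}\dot\q_t=\nabla_{\dot\q_t}\dot\sigma^t_r$, Lemma \ref{commute} (which replaces $\nabla_{\dot\q_t}\jmath_s^*\nabla_{\dot\q_t}$ by $-(1+\nabla_{\dot\q_t}^*\nabla_{\dot\q_t})^{1-s}$ modulo a smoothing term), and the conjugation identity yield $\tfrac{d}{dt}\hat{\dot\q}_t=-\rho(t)A_0^{2-2s}\hat\p_t+(\mathrm{cpt})$, hence $\hat{\dot\q}_t=\dot\q_0-A_0^{2-2s}\int_0^t\rho(\tau)\hat\p_\tau\,\diff\tau+(\mathrm{cpt})$. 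Substituting into the $\hat\p$-equation and using $A_0^{2s-2}A_0^{2-2s}=\mathrm{id}$, the feedback collapses to a scalar one: with $I(t):=\int_0^t\rho(\tau)\hat\p_\tau\,\diff\tau$, the pair $(\hat\p_t,I(t))$ satisfies a linear inhomogeneous system with generator $\rho(t)S$, $S=\big(\begin{smallmatrix}0&\mathrm{id}\\\mathrm{id}&0\end{smallmatrix}\big)$ (so $S^2=\mathrm{id}$), and inhomogeneity $-\rho(t)A_0^{2s-2}\dot\q_0=-\rho(t)\jmath_{1-s}^*\dot\q$ up to a compact term.

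Finally I solve this by variation of constants. From $S^2=\mathrm{id}$ one gets $e^{\mu S}=\cosh(\mu)\,\mathrm{id}+\sinh(\mu)\,S$; with $\mu(t):=\int_0^t\rho(\tau)\,\diff\tau\in[0,t]$, the homogeneous part of the solution contributes $\cosh(\mu(t))\,\p$ to the $\hat\p$-slot, while the Duhamel integral contributes $-\big(\int_0^t\cosh(\mu(t)-\mu(\tau))\rho(\tau)\,\diff\tau\big)\jmath_{1-s}^*\dot\q=-\sinh(\mu(t))\,\jmath_{1-s}^*\dot\q$ — everything else being a compact operator applied to a set bounded in the appropriate Sobolev space and integrated over $[0,t]$, hence a map sending bounded sets to precompact sets. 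Undoing the parallel transport gives $\phi^t_r(\q,\p)=P(t,0)\big[-\sinh(\mu(t))\,\jmath_{1-s}^*\dot\q+\cosh(\mu(t))\,\p+K(t,(\q,\p))\big]$, so the lemma holds (for $t\ge0$, which is the range used in the sequel) with $a(t,\cdot)=-\sinh(\mu(t))\le0$ and $b(t,\cdot)=\cosh(\mu(t))>0$ — both equal to the asserted values at $t=0$ and, since $\mu\in[0,t]$, bounded and therefore with precompact image on bounded sets — and $K$ the compact, fibre-preserving remainder with $K(0,\cdot)=0$. The main obstacle is the conjugation step: checking that, uniformly on bounded sets of initial data and on bounded time intervals, the $\q_t$-dependent fractional operators commute with $P(0,t)$ up to operators genuinely carrying bounded sets to precompact sets; this is exactly what the order bounds for $[D_X,(1+\nabla_{\dot\q}^*\nabla_{\dot\q})^\ell]$ preceding the statement provide, and the remaining points (completeness of the flow, compactness of $\diff\Delta_r$, Duhamel bookkeeping) are routine.
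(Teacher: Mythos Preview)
Your argument is correct and lands on exactly the same formulas as the paper: $a=-\sinh(\mu)$ and $b=\cosh(\mu)$ with $\mu(t)=\int_0^t\tilde\varphi(\phi^\tau_r)\,\diff\tau$. The approaches are the same in substance but organised differently. The paper works directly with the ``moving'' combinations $\jmath_{1-s}^*\dot{\sigma_r^t}\pm\phi_r^t$, shows (using the commutator $[D_X,\jmath_{1-s}^*]$) that each satisfies a scalar linear ODE with compact inhomogeneity, solves by variation of constants, and subtracts. You instead conjugate by $P(0,t)$ to a fixed fibre, derive a coupled $2\times2$ system for $(\hat\p_t,I(t))$ with generator $\rho(t)S$, and read off the answer from $e^{\mu S}=\cosh(\mu)\,\mathrm{id}+\sinh(\mu)S$; your conjugation identity for $(1+\nabla^*\nabla)^\ell$ is precisely the integrated form of the commutator bound the paper uses. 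The paper's $\pm$-diagonalisation and your eigendecomposition of $S$ are the same step in different clothing. Your packaging makes the role of parallel transport and the fixed-fibre linear algebra more transparent; the paper's stays closer to the intrinsic quantities and needs one fewer auxiliary variable. One small remark: your restriction to $t\ge0$ is indeed needed for the sign claim $a\le0$ (the paper's statement has the same harmless issue, and only $t\ge0$ is used downstream).
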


\begin{rmk}
In the proposition above, by compact we mean that, for any compact set $C\subset H^s(S^1,M)$ and any bounded set $B\subset \pi^{-1}(C)$ we 
have that $K(t,B)\subset \M^{1-s}$ is precompact. 
\end{rmk}

\begin{proof}
For $t\in \R$ we denote by $\dot{\sigma^t_r}(\cdot)\in H^{s-1}( \sigma^t_r(\cdot)^*TM)$ the tangent field to  $\sigma^t_r(\cdot)\in H^s(S^1,M)$. Dropping the subscript $\dot \q$ from the covariant 
derivative and 
recalling that $\jmath^*_\ell  = (1+\nabla^*\nabla)^{-\ell}$ and 
\begin{align*}
\text{grad}\, \A_r(\q,\p) &= (\text{grad}\, \A_r(\q,\p)^\hor,\text{grad}\, \A_r(\q,\p)^\ver) \\
&= \big (\jmath_s^* \nabla^* \p - \text{grad}\, \Delta(\q,\p)^\hor, \jmath_{1-s}^* (\dot \q - \p) -\text{grad}\, \Delta(\q,\p)^\ver \big ),
\end{align*}
where $\Delta:\M^{1-s}\to \R$ is given by \eqref{cc} and $\text{grad}\, \Delta$ is computed with respect to the $\langle \cdot,\cdot \rangle_{\M^{1-s}}$-metric given by \eqref{metricms}, we compute:
\begin{align*}
D_{\frac{\diff}{\diff t} \sigma_r^\cdot} \Big (\jmath^*_{1-s} \dot{\sigma_r^\cdot}\Big) 
					&= \jmath^*_{1-s} D_{\frac{\diff}{\diff t} \sigma_r^\cdot}\dot{\sigma_r^\cdot} + [D_{\frac{\diff}{\diff t} \sigma_r^\cdot}, \jmath_{1-s}^*]\dot{\sigma_r^\cdot}\\
					&=  \jmath^*_{1-s} \nabla \Big (\frac{\diff}{\diff t} \sigma_r^\cdot\Big ) + [D_{\frac{\diff}{\diff t} \sigma_r^\cdot}, \jmath^*_{1-s}]\dot{\sigma_r^\cdot}\\
					&=\jmath^*_{1-s} \nabla \Big (\frac{\diff}{\diff t} \phi_r^\cdot\Big )^{\hor} + [D_{\frac{\diff}{\diff t} \sigma_r^\cdot}, \jmath^*_{1-s}]\dot{\sigma_r^\cdot}\\
					&= - \tilde \varphi(\phi^t_r) \cdot \jmath^*_{1-s} \nabla \Big (\jmath^*_s\nabla^* \phi^t_r\Big ) + \tilde \varphi(\phi^t_r) \cdot \jmath^*_{1-s}\nabla \text{grad}\, \Delta(\phi^t_r)^\hor+ [D_{\frac{\diff}{\diff t} \sigma_r^\cdot}, \jmath^*_{1-s}]\dot{\sigma_r^\cdot}\\
					&= - \tilde \varphi(\phi^t_r) \cdot \jmath^*_1\nabla \nabla^* \phi^t_r + \tilde \varphi(\phi^t_r) \cdot \jmath^*_{1-s}\nabla \text{grad}\, \Delta(\phi^t_r)^\hor + [D_{\frac{\diff}{\diff t} \sigma_r^\cdot}, \jmath^*_{1-s}]\dot{\sigma_r^\cdot},
\end{align*}
where 
$$\tilde \varphi(\cdot) := \frac{\varphi(\cdot)}{\sqrt{1+\|\text{grad}\, \A_r(\cdot)\|^2}}.$$
Therefore, we obtain 
\begin{align*}
D_{\frac{\diff}{\diff t} \sigma_r^t} \Big (\jmath^*_{1-s} \dot{\sigma_r^t} + \phi^t_r\Big) &= - \tilde \varphi(\phi^t_r)\cdot \jmath^*_1\nabla \nabla^* \phi^t_r + \tilde \varphi(\phi^t_r) \cdot \jmath^*_{1-s}\nabla\text{grad}\, \Delta(\phi^t_r)^\hor + [D_{\frac{\diff}{\diff t} \sigma_r^t}, \jmath^*_{1-s}]\dot{\sigma_r^t} + \Big (\frac{\diff}{\diff t} \phi_r^\cdot\Big )^{\ver}\\
							&= - \tilde \varphi(\phi^t_r)\cdot \jmath^*_1\nabla \nabla^* \phi^t_r+ \tilde \varphi(\phi^t_r) \cdot \jmath^*_{1-s}\nabla\text{grad}\, \Delta(\phi^t_r)^\hor + [D_{\frac{\diff}{\diff t} \sigma_r^t}, \jmath^*_{1-s}]\dot{\sigma_r^t} \\
							&- \tilde \varphi(\phi^t_r)\cdot \jmath_{1-s}^* \big (\dot{\sigma_r^t} - \phi^t_r\big ) +\tilde \varphi(\phi^t_r)\cdot \text{grad}\, \Delta(\phi^t_r)^\ver\\ 
							&=  - \tilde \varphi(\phi^t_r)\cdot  \big ( \jmath_{1-s}^* \dot{\sigma_r^t} +\phi^t_r \big ) + [D_{\frac{\diff}{\diff t} \sigma_r^t}, \jmath^*_{1-s}]\dot{\sigma_r^t}\\
							&+ \tilde \varphi(\phi^t_r)\cdot \big ( (1  - \jmath^*_1\nabla \nabla^* )\phi^t_r + \jmath^*_{1-s}\phi^t_r + \jmath^*_{1-s}\nabla\text{grad}\, \Delta(\phi^t_r)^\hor +\text{grad}\, \Delta(\phi^t_r)^\ver \big ) \\
							&= - \tilde \varphi(\phi^t_r)\cdot  \big ( \jmath_{1-s}^* \dot{\sigma_r^t} +\phi^t_r \big ) + \kappa_1(\phi^t_r),
\end{align*}
where
$$\kappa_1(\phi^t_r) := \tilde \varphi(\phi^t_r)\cdot \big ( (1  - \jmath^*_1\nabla \nabla^* )\phi^t_r + \jmath^*_{1-s} \phi^t_r+ \jmath^*_{1-s}\nabla\text{grad}\, \Delta(\phi^t_r)^\hor +\text{grad}\, \Delta(\phi^t_r)^\ver\big )+ [D_{\frac{\diff}{\diff t} \sigma_r^t}, \jmath^*_{1-s}]\dot{\sigma_r^t}.$$
Similarly, we see that 
$$D_{\frac{\diff}{\diff t} \sigma_r^t} \Big (\jmath^*_{1-s} \dot{\sigma_r^t} - \phi^t_r\Big) = \tilde \varphi(\phi^t_r) \cdot \big ( \jmath^*_{1-s} \dot{\sigma_r^t} - \phi^t_r\big)+\kappa_2(\phi^t_r),$$
where 
$$\kappa_2(\phi^t_r) = \tilde \varphi(\phi^t_r)\cdot \big ( (1  - \jmath^*_1\nabla \nabla^* )\phi^t_r - \jmath^*_{1-s} \phi^t_r+ \jmath^*_{1-s}\nabla\text{grad}\, \Delta(\phi^t_r)^\hor -\text{grad}\, \Delta(\phi^t_r)^\ver\big )+ [D_{\frac{\diff}{\diff t} \sigma_r^t}, \jmath^*_{1-s}]\dot{\sigma_r^t}.$$
The variation of constants formula yields now 
\begin{align}
\big (\jmath^*_{1-s} \dot{\sigma_r^t} + \phi^t_r\big )(\q,\p) &= \exp\Big (- \int_0^t \tilde \varphi(\phi^\tau_r) \diff \tau \Big )\cdot P(t,0) \Big [\jmath^*_{1-s} \dot\q + \p\Big ] \nonumber \\
						&+ \int_0^t \Big ( \exp \Big (- \int_\rho^t \tilde \varphi(\phi^\tau_r) \diff \tau\Big ) \cdot P(t,\tau)\big [ \kappa_1 (\phi^\rho_r)\big ]\, \diff \rho\nonumber \\
						&= \exp\Big (- \int_0^t \tilde \varphi(\phi^\tau_r) \diff \tau \Big )\cdot P(t,0) \Big [\jmath^*_{1-s} \dot\q + \p\Big ] + K_1 (t, (\q,\p)) \label{rep1}
						\end{align}
and on the other hand 
\begin{equation}
\big (\jmath^*_{1-s} \dot{\sigma_r^t} - \phi^t_r\big )(\q,\p) = \exp\Big ( \int_0^t \tilde \varphi(\phi^\tau_r) \diff \tau \Big )\cdot P(t,0) \Big [\jmath^*_{1-s} \dot\q - \p\Big ] + K_2(t,(\q,\p)),
\label{rep2}
\end{equation}
where 
$$K_2(t,(\q,\p)) = \int_0^t \Big ( \exp \Big (\int_\rho^t \tilde \varphi(\phi^\tau_r) \diff \tau\Big ) \cdot P(t,\tau)\big [ \kappa_2 (\phi^\rho_r)\big ]\, \diff \rho.$$
Subtracting \eqref{rep2} to \eqref{rep1} we obtain 
\begin{align*}
\phi^t_r(\q,\p)&=\underbrace{\frac 12 \Big [ \exp \Big (- \int_0^t \tilde \varphi(\phi^\tau_r) \diff \tau\Big ) - \exp\Big ( \int_0^t \tilde \varphi(\phi^\tau_r) \diff \tau \Big )\Big ]}_{:=a(t,(\q,\p))}\cdot P(t,0) \big [\jmath^*_{1-s}\dot \q\big ]\\
				&\underbrace{+\frac 12 \Big [ \exp \Big (- \int_0^t \tilde \varphi(\phi^\tau_r) \diff \tau\Big ) + \exp\Big ( \int_0^t \tilde \varphi(\phi^\tau_r) \diff \tau \Big )\Big ]}_{=:b(t,(\q,\p))}\cdot P(t,0) \big [\p\big ]\\
				&+ \frac 12 \Big (K_1(t,(\q,\p))-K_2(t,(\q,\p))\Big ).
\end{align*}
It is straightforward to check that the functions $a$ and $b$ have the desired properties. Now set
$$K(t,(\q,\p)):= \frac 12 P(0,t) \big [ K_1(t,(\q,\p))-K_2(t,(\q,\p))\big ].$$
We readily see that all the operators appearing in the functions $\kappa_1$ and $\kappa_2$ are compact, hence the fact that $K$ is a compact fibre-preserving map
follows from the fact that parallel transport ``behaves well'' with respect to compactness; for more details we refer to \cite[Section 3]{Hofer:1988}.
\end{proof}

\begin{proof}[Proof of Proposition \ref{intersectionproposition}]
In virtue of the representation Lemma \ref{replemma} we see that the problem 
$$\phi^{t}_r ( \pi^{-1}(C)) \cap \mathbb O_{H^s} \neq \emptyset, \quad t\in [0,t_0],$$ 
is equivalent to finding solutions of 
\begin{equation}
0 = a(t,(\q,\p)) \cdot \jmath^*_{1-s}\dot \q + b(t,(\q,\p))\cdot \p  + K(t,(\q,\p)),
\label{1}
\end{equation}
on $\pi^{-1}(C)$. We equivalently rewrite  \eqref{1} as 
\begin{equation}
\p = - \frac{1}{b(t,(\q,\p))} \cdot \Big (a(t,(\q,\p)) \cdot \jmath^*_{1-s}\dot \q + K(t,(\q,\p))\Big )=: T(t,(\q,\p)),
\label{2}
\end{equation}
where $T:[0,t_0]\times \pi^{-1}(C)\to \pi^{-1}(C)$ is a fibre-preserving map mapping bounded sets into precompact sets and additionally satisfying 
$$T(0,\cdot ) \equiv 0$$
and 
 $$T(t,\cdot) \equiv 0 \ \ \text{on} \ \pi^{-1}(C) \cap \{\|\p\|_{1-s}\geq \gamma''\}.$$
We are now in position to apply Dold's fixed point transfer \cite{Dold:1976} (see also \cite{Hofer:1985ur}). 
This yields a transfer homomorphism $\text{tr}_t$, $t\in [0,t_0]$, such that the following 
diagram is commutative

$$\xymatrix{ & H^*\Big (\phi^{-t}_r \Big ( \phi^{t}_r(\pi^{-1}(C))\cap \mathbb O_{H^s}\Big )\Big ) \ar[dr]_{\text{tr}_t} & \\
H^*(C) \ar[ur]^{\pi^*} \ar[rr]^{\text{id}^*} & & H^*(C)
}
$$
where with slight abuse of notation we denoted with $\pi^*$ the map induced in cohomology by 
$$\pi \Big |_{\phi^{-t}_r \big ( \phi^{t}_r(\pi^{-1}(C))\cap \mathbb O_{H^s}\big )}:  \phi^{-t}_r \big ( \phi^{t}_r(\pi^{-1}(C))\cap \mathbb O_{H^s}\big ) \to C.$$
In particular, we obtain that $\pi^*$ is injective, and hence the desired homomorphism is given by 
$$\beta_t:= (\phi^{-t}_r)^* \circ \pi^*.$$
One now easily checks the commutativity of the diagram in the statement of Proposition \ref{intersectionproposition}.
\end{proof}


\subsection{The proof.} Now we explain how Theorem \ref{thm:4} follows from Propositions  \ref{deformationproposition} and \ref{intersectionproposition}.
If $M$ is not simply-connected we choose $C=\{\gamma\}$, where $\gamma\in C^\infty(S^1,M)$ is a smooth non-contractible loop. 

If $M$ is simply connected the choice of $C$ is more subtle, since for an arbitrary $C$ we cannot exclude that the 
critical point of $\A_r$ coming from the minimax procedure be constant. We recall that Sullivan's theory of minimal models for rational homotopy type \cite{Sullivan:1973,Vigue-Poirrier:1976ug}
guarantees that the rational cohomology groups of $H^1(\T,M)$ (thus, of $H^s(\T,M)$ since they are homotopically equivalent) 
do not vanish in arbitrary large degree. Moreover, for any $k\in \N$ 
 we can find a compact set $C\subset H^1(S^1,M)$ such that the inclusion $\imath:C\hookrightarrow H^1(\T,M)$ 
 induces an isomorphism in cohomology $\imath^*:H^* (H^1(\T,M)) \to H^*(C)$ up to degree $k$ (c.f. \cite{Bott:1982}). 
 Therefore, we choose $k>\dim M$ such that $H^k(H^1(\T,M))\neq 0$ and pick $C\subset H^1(S^1,M)$ compact as above; notice that $C$ is a fortiori compact in $H^s(S^1,M)$.  
 
 In both cases, we obtain a bounded continuous non-increasing minimax function via 
 $$\theta: (0,+\infty)\to [0,+\infty), \quad \theta (r) := \inf_{t\geq 0} \sup_{\phi^t_r(\pi^{-1}(C))} \A_r.$$
 The fact that $\theta$ is non-increasing and bounded is obvious. By Proposition \ref{intersectionproposition} we also see that 
 $$\sup_{\phi^t_r(\pi^{-1}(C))} \A_r \geq \inf_{\mathbb O_{H^s}} \A_r =0,\quad \forall t\geq 0,$$
 thus $\theta(r)\geq 0$. As far as continuity is concerned, we observe that for $r_1\geq r_2$ and fixed $t\geq 0$ we have 
 (for sake of simplicity we assume that the both suprema are attained, say at $(\q_1,\p_1)$ and $(\q_2,\p_2)$ respectively)
 
 \begin{align*}
0\leq \sup_{\phi^t_{r_2}(\pi^{-1}(C))} \A_{r_2} - \sup_{\phi^t_{r_1}(\pi^{-1}(C))} \A_{r_1} &= \A_{r_2}(\q_2,\p_2) - \A_{r_1}(\q_1,\p_1)\\
								&\leq \A_{r_2}(\q_2,\p_2) - \A_{r_1}(\q_2,\p_2)\\
								&= \Delta_{r_1}(\q_2,\p_2) - \Delta_{r_2}(\q_2,\p_2 )\\
								&\leq \sup_{(q,p)\in TM} \Big (\delta_{r_1}(q,p) - \delta_{r_2}(q,p)\Big ),
\end{align*} 
where $\Delta:\M^{1-s}\to \R$ and $\delta:TM\to \R$ are as in \eqref{cc}. Therefore, we obtain (also here we assume for sake of simplicity that both 
infima are attained, say at $t_1$ and $t_2$ respectively)

\begin{align*}
0 & \leq \theta(r_2)-\theta(r_1) \\
   & =   \inf_{t\geq 0} \sup_{\phi^t_{r_2}(\pi^{-1}(C))} \A_{r_2} -  \inf_{t\geq 0} \sup_{\phi^t_{r_1}(\pi^{-1}(C))} \A_{r_1} \\
   &= \sup_{\phi^{t_2}_{r_2}(\pi^{-1}(C))} \A_{r_2} - \sup_{\phi^{t_1}_{r_1}(\pi^{-1}(C))} \A_{r_1}\\
   &\leq   \sup_{\phi^{t_1}_{r_2}(\pi^{-1}(C))} \A_{r_2} - \sup_{\phi^{t_1}_{r_1}(\pi^{-1}(C))} \A_{r_1}\\
   &\leq \sup_{(q,p)\in TM} \Big (\delta_{r_1}(q,p) - \delta_{r_2}(q,p)\Big ),
   \end{align*}
and the claim follows. Theorem \ref{thm:4} finally follows from the next

\begin{lem}
For every $r>0$ there exists $(\q_r,\p_r)\in\mathrm{crit}\, \A_r$ non-constant with $\A_r(\q_r,\p_r)=\theta(r)$.
\label{lem:existencensc}
\end{lem}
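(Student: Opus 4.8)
The plan is a standard minimax argument built on the two key propositions of this section. Recall we have already observed $0\leq\theta(r)\leq\alpha$. First I would show that $\theta(r)$ is a critical value of $\A_r$, attained inside the closed set $Z:=\overline{\bigcup_{t\geq 0}\phi^t_r(\pi^{-1}(C))}$. Suppose not; then $\crit(\A_r)\cap\A_r^{-1}(\theta(r))$ is disjoint from $Z$, so $\U:=\M^{1-s}\setminus Z$ is an open neighbourhood of it, and Proposition \ref{deformationproposition} supplies $\epsilon>0$ and $t_0>0$ with $\phi^{t_0}_r\big(\{\A_r\leq\theta(r)+\epsilon\}\cap Z\big)\subset\{\A_r\leq\theta(r)-\epsilon\}$. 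Picking $t_1\geq 0$ with $\phi^{t_1}_r(\pi^{-1}(C))\subset\{\A_r\leq\theta(r)+\epsilon\}$ (possible since $\theta(r)$ is an infimum over $t$), using the flow property $\phi^{t_1+t_0}_r=\phi^{t_0}_r\circ\phi^{t_1}_r$ and $\phi^{t_1}_r(\pi^{-1}(C))\subset Z$, one gets $\sup_{\phi^{t_1+t_0}_r(\pi^{-1}(C))}\A_r\leq\theta(r)-\epsilon$, contradicting the definition of $\theta(r)$. Two remarks: the flow here is the truncated field $V_r$, not the plain normalised gradient, but since $\crit(\A_r)\cap\A_r^{-1}([0,\alpha])\subset\{\|\p\|_{1-s}<\gamma\}$, where $\varphi\equiv 1$, while the stationary region $\{\|\p\|_{1-s}\geq\gamma''\}$ is disjoint from $\mathbb O_{H^s}$, the proof of Proposition \ref{deformationproposition} carries over to $V_r$ at action values in $[0,\alpha]$; and $\theta(r)\geq 0$ is exactly Proposition \ref{intersectionproposition}, which forces $\phi^t_r(\pi^{-1}(C))\cap\mathbb O_{H^s}\neq\emptyset$ for all $t$, hence $\sup_{\phi^t_r(\pi^{-1}(C))}\A_r\geq\inf_{\mathbb O_{H^s}}\A_r=0$.

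It remains to arrange that the critical point at level $\theta(r)$ is non-constant. If $\theta(r)>0$ this is automatic: a constant critical point $(\q_0,\p_0)$ is a constant loop over a critical point of $H_r$, so $\A_r(\q_0,\p_0)=-H_r(\q_0,\p_0)\leq 0$ because $H_r\geq 0$ by construction. Suppose then $\theta(r)=0$. If $M$ is not simply connected we chose $C=\{\gamma\}$ with $\gamma$ non-contractible; since the fibres of $\M^{1-s}$ are connected and $\sigma^t_r=\pi\circ\phi^t_r$ moves the base loop continuously, $\phi^t_r(\pi^{-1}(C))$ stays over the connected component $\Lambda_\gamma$ of loops freely homotopic to $\gamma$, and $\pi^{-1}(\Lambda_\gamma)$ is open and closed in $\M^{1-s}$; thus $Z\subset\pi^{-1}(\Lambda_\gamma)$, and the critical point located in $Z$ above projects to a non-contractible loop, hence is non-constant. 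If $M$ is simply connected we chose $C\subset H^1(S^1,M)$ compact so that $\imath^*\colon H^*(H^s(S^1,M))\to H^*(C)$ is an isomorphism up to some degree $k>\dim M$ with $H^k\neq 0$; following the argument of \cite{Hofer:1988} one excludes that $\crit(\A_r)\cap\A_r^{-1}(0)$ consist only of constant loops, since these form a set retracting onto $\Lambda^0 M\cong M$: by Proposition \ref{deformationproposition} the deformed image of $\pi^{-1}(C)$ would then meet $\mathbb O_{H^s}$ (which lies in $\A_r^{-1}(0)$, as $\mathbb O_M\subset B$ and $H_r\equiv 0$ on $B$ by construction) only inside an arbitrarily small neighbourhood of $\Lambda^0 M$, whereas the commuting diagram of Proposition \ref{intersectionproposition} together with the injectivity of $\beta_{t_0}$ would produce a non-zero class in $H^k$ of that neighbourhood, contradicting $H^k(M)=0$.

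The main obstacle is the bookkeeping in the simply-connected case: through the composition of the minimax deformation with the deformation of Proposition \ref{deformationproposition} one must keep the portion of $\phi^t_r(\pi^{-1}(C))$ that has not been pushed below level $0$ inside a neighbourhood of the constant loops which is cohomologically trivial in degree $k$ — this is precisely the delicate step already treated in Hofer--Viterbo. The only mild new point is checking that the truncation $\varphi$ of the gradient field is irrelevant at action values in $[0,\alpha]$, which is immediate from the a priori confinement $\crit(\A_r)\cap\A_r^{-1}([0,\alpha])\subset\{\|\p\|_{1-s}<\gamma\}$ established just before the statement.
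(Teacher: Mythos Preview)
Your proposal is correct and follows essentially the same strategy as the paper: the standard minimax deformation argument via Proposition~\ref{deformationproposition} yields a critical value, the non-simply-connected case is handled by staying in the component of non-contractible loops, and in the simply-connected case with $\theta(r)=0$ one derives a contradiction from the diagram of Proposition~\ref{intersectionproposition} and the vanishing $H^k(M)=0$ for $k>\dim M$. Your explicit localisation of the critical point in $Z=\overline{\bigcup_{t\geq 0}\phi^t_r(\pi^{-1}(C))}$ is a slight elaboration over the paper's terser ``we are working on a connected component of non-contractible loops'', and your remarks on the truncation $\varphi$ are a useful sanity check that the paper leaves implicit, but neither changes the substance of the argument.
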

\begin{proof}
The fact that $\theta(r)$ is a critical value for $\A_r$ follows from Proposition \ref{deformationproposition}. In case $M$ is not simply-connected, the fact that 
the corresponding critical point $(\q_r,\p_r)$ is non-constant follows from the fact that we are working on a connected component of non-contractible loops. 

In case $M$ is simply connected we need a more refined argument to exclude that $(\q_r,\p_r)$ be constant; this will make use of the assumptions on 
the compact set $C$. We first notice that $(\q_r,\p_r)$ is necessarily non-constant if $\theta(r)>0$, as constant critical points have non-positive $\A_r$-action. 
Therefore, we can assume that $\theta(r)=0$ and that all critical points of $\A_r$ at level zero are constant. 

We start noticing that a sufficiently small neighborhood $\mathcal U\subset H^s(S^1,M)$ of the set $\Lambda^0M$ of constant loops (which we recall is diffeomorphic to $M$) 
cannot contain non-constant closed geodesics for $(M,g)$. This follows from the fact that, since $s>\frac 12$, $H^s$-closedness to a constant loop implies 
$C^0$-closedness, and the claim follows from the positivity of the injectivity radius of $(M,g)$. In particular, the image of any loop in $\U$ is contained in a 
small Riemannian ball. From this we see that $\Lambda^0M$ is a strong deformation retract 
of $\mathcal U$: Indeed, we first ``regularize'' loops in $\mathcal U$ to obtain a set $\{\E<\epsilon\}\subset H^1(S^1,M)$, $\epsilon>0$ small enough, 
and then use the negative gradient flow of the energy functional $\E$, as recalled in the proof of Lemma \ref{lembumpy}, to deform $\{\E<\epsilon\}$ into $\Lambda^0M$. 

By assumption we now have that $\V:= \pi^{-1}(\U)$ is a neighborhood of 
 $$\crit (\A_r) \cap \A_r^{-1}(0).$$
Thus, Proposition \ref{deformationproposition} yields $\epsilon>0$ and $t_0>0$ such that 
 for all $t\geq t_0$
$$\phi^t_r \big (\{\A_r\leq \epsilon\}\setminus \V\big ) \subset \{ \A_r \leq -\epsilon\}.$$
Using the definition of $\theta(r)$, we find $t_1\geq 0$ such that
 $$\phi^{t_1}_r(\pi^{-1}(C))\subset \{\A_r \leq \epsilon\}.$$
 Therefore,
 $$\phi^{t_0}_r \Big (\phi^{t_1}_r (\pi^{-1}(C) )\setminus \V\Big )\subset \{ \A_r\leq -\epsilon\},$$
 which implies that 
 $$\phi^{t_0}_r \Big (\phi^{t_1}_r (\pi^{-1}(C))\setminus \V\Big )\cap \mathbb O_{H^s} =\emptyset.$$
 Since $\phi^{t_0+t_1}_r (\pi^{-1}(C)) \cap \mathbb O_{H^s}\neq \emptyset$ by Proposition \ref{intersectionproposition}, we deduce that 
 $$\phi^{t_0+t_1}_r (\pi^{-1}(C)) \cap \mathbb O_{H^s}\subset \phi^{t_0}_r(\V).$$
 Using again Proposition \ref{intersectionproposition} we obtain that the diagram
 $$
 \xymatrix{
H^* (\phi^{t_0}_r(\V)) \ar[rrr]^{\jmath^*} & & & H^* (\phi^{t_0+t_1}_r(\pi^{-1}(C)) \cap \mathbb O_{H^s})\\
  & H^*(H^s(S^1,M)) \ar[ul]^{(\pi|_{\phi^{t_0}_r(\V)})^*\ \ \ } \ar[urr]_{\ (\pi|_{...})^*} \ar[rr]_{\imath^*}& & H^*(C) \ar[u]_{\beta_{t_0+t_1}}
 }
 $$
 commutes. Thus, the fact that $\beta_{t_0+t_1}$ is injective implies that the map
  $$\jmath^*_k \circ (\pi|_{\phi^{t_0}_r(\V)})^*_k :  H^k(H^s(S^1,M)) \to  H^k (\phi^{t_0+t_1}_r(\pi^{-1}(C)) \cap \mathbb O_{H^s})$$
 is non-zero and injective, and this contradicts the fact that 
\begin{equation*}
H^k(\phi^{t_0}_r(\V))\cong  H^k (\V) \cong  H^k (\U) \cong H^k(M) =0. \qedhere
\end{equation*}
\end{proof}


\appendix

\section{Proof of Lemma \ref{lem:2.2}}
\label{app:A}

In this section we give a proof of Lemma \ref{lem:2.2} on the growth speed of the eigenvalues of the self-adjoint operator $\nabla_{\dot \q}^* \circ \nabla_{\dot \q}$, 
for a given smooth loop $\q\in C^\infty(S^1,M)$. Moreover, we provide a uniform bound for the $L^\infty$-norm of the corresponding eigenvectors with $L^2$-norm equal one.

We consider a time-depending local chart $\varphi:S^1 \times B_\epsilon(0)\to M$ with $\varphi(\cdot,0)=\q$ and the induced map 
$$C^\infty (S^1,\R^n) \to \Gamma(\q^*TM), \quad \xi \mapsto (t\mapsto \diff \varphi(t,0) \cdot \xi (t)).$$
In this setting we have 
\begin{equation}
\nabla_{\dot \q} \xi = \dot \xi + \Gamma(\cdot,\dot \q(\cdot))\cdot \xi,
\label{covdev}
\end{equation}
with 
\begin{equation}
 |  \Gamma(\cdot,\dot \q(\cdot))| \leq \alpha \|\dot \q\|_\infty
 \label{christoffelsymbols}
 \end{equation}
for some constant $\alpha>0$ depending only on $g$. The quadratic form $Q:C^\infty(S^1,\R^n)\to \R$ associated with the self-adjoint operator $\nabla_{\dot \q}^* \circ \nabla_{\dot \q}$ reads 
$$Q(\xi) := \int_0^1 | \nabla_{\dot \q}\xi |^2 \, \diff t.$$
Using \eqref{covdev}, \eqref{christoffelsymbols}, and the elementary 
inequality $(a+b)^2\leq 2 (a^2+b^2)$, we compute 
\begin{align*}
Q(\xi) &= \int_0^1 | \dot \xi + \Gamma(\cdot,\q(\cdot))\cdot \xi|^2 \, \diff t\\
	&\leq 2 \int_0^1 \Big (|\dot \xi|^2 + |\Gamma(\cdot,\q(\cdot))\cdot \xi|^2\Big ) \, \diff t\\
	&\leq 2 \int_0^1 \Big (|\dot \xi|^2 + \alpha^2 \|\dot \q\|_\infty |\xi|^2\Big ) \, \diff t\\ 
	&\leq D \int_0^1 \Big (|\dot \xi|^2_{\text{eucl}} + E(\|\dot \q\|_\infty) |\xi|^2_{\text{eucl}}\Big )\, \diff t =: Q^+(\xi),
\end{align*}
where $D,E(\|\dot \q\|_\infty)>0$ are suitable constants depending respectively only on the metric $g$ and on the metric and the $L^\infty$-norm of $\dot \q$.
Similarly, employing the inequality $(a-b)^2 \geq \frac 12 a^2 - b^2$ we obtain 
\begin{align*}
Q(\xi) &= \int_0^1 | \dot \xi + \Gamma(\cdot,\q(\cdot))\cdot \xi|^2 \, \diff t\\
	&\geq \int_0^1 \Big (\frac 12 |\dot \xi|^2 - |\Gamma(\cdot,\q(\cdot))\cdot \xi|^2\Big ) \, \diff t\\
	&\geq d \int_0^1 \Big (|\dot \xi|^2_{\text{eucl}} -e(\|\dot \q\|_\infty) |\xi|^2_{\text{eucl}}\Big )\, \diff t =:Q^-(\xi),
\end{align*}
where again $d,e(\|\dot \q\|_\infty)>0$ are suitable constants.  From the variational characterization of the eigenvalues of a self-adjoint operator $T$ 
on a Hilbert space $\HH$
$$\lambda_j(T) = \max_{\text{codim} (V) =j} \min_{S\cap V} \ Q,$$
where $Q$ is the associated quadratic form and $S\subset \HH$ is the unit sphere, we deduce that 
$$\lambda_j (Q^-) \leq \lambda_j (\q) \leq \lambda_j (Q^+),$$
and it is now an easy exercise to show that 
$$\left \{\begin{array}{l} \lambda_j(Q^-) = c (j^2 - d(\|\dot \q\|_\infty)), \\ \\ \lambda_j(Q^+) = C (j^2 + d(\|\dot \q\|_\infty)). \end{array}\right., \quad  \quad \forall j.$$
Indeed, the operator associated with $Q^-$ (the argument being analogous for $Q^+$) is given by 
$$\xi \mapsto - d \big ( \ddot \xi + e(\|\dot \q\|_\infty) \xi\big ),$$
and hence its eigenvalues are given by $d(4\pi^2j^2 - e(\|\dot \q\|_\infty)$. 

Let now $\xi$ be an eigenvector of $\nabla_{\dot \q}^* \circ \nabla_{\dot \q}$ with $\|\xi\|_2 =1$, and let $\lambda^2>0$ be the corresponding eigenvalue, 
that is $-\nabla_{\dot \q}^2\xi = \nabla_{\dot \q}^* \circ \nabla_{\dot \q} \xi = \lambda^2 \xi$. We set 
$$u:= (\xi, \frac 1 \lambda \nabla_{\dot \q} \xi ) \in \Gamma(\q^*TM)\times \Gamma(\q^*TM),$$
where $\Gamma(\q^*TM)\times \Gamma(\q^*TM)$ is endowed with the product $L^2$-metric, and compute 
\begin{align*}
|u(t_1)|^2 - |u(t_0)|^2 &= \int_{t_0}^{t_1} \dt |u(t)|^2\, \diff t\\
				&= 2\int_{t_0}^{t_1} g_\q (\nabla_{\dot \q} u,u)\, \diff t\\
				&= 2 \int_{t_0}^{t_1} \Big (g_\q (\nabla_{\dot \q} \xi, \xi) + g_\q (\frac 1 \lambda \nabla_{\dot \q}^2 \xi, \frac 1\lambda \nabla_{\dot \q} \xi )\Big )\, \diff t\\
				&= 0. 
\end{align*}
It follows that the function $t\mapsto |u(t)|$ is constant. In particular, 
$$c = \|u\|^2 = \|\xi\|^2 + \| \frac 1\lambda \nabla_{\dot \q} \xi \|^2 = 1 + \int_0^1 \frac{1}{\lambda^2} g_\q (\nabla_{\dot \q}^* \circ \nabla_{\dot \q} \xi ,\xi) \, \diff t = 2,$$
so that $|\xi(t)|^2 \leq |u(t)|^2 \leq 2$ for all $t\in [0,1]$, an the claim follows.


\section{Non global equivalence of the metrics $\langle \cdot,\cdot\rangle_r$ and $\langle \cdot,\cdot\rangle_1r\emb$}
\label{app:B}

In this section we provide an example showing that the metrics $\langle \cdot,\cdot\rangle_r$ and $\langle \cdot,\cdot\rangle_r^\emb$ defined in Section 2 are not globally 
equivalent for every $r\in (0,1]$ (notice that for $r=0$ the two metrics coincide by construction). 

Thus, let 
$$M := S^1 = \{z \in \mathbb{C}: |z|^2 = 1\} \subset \mathbb{C} \simeq \mathbb{R}^2$$
be the unit circle endowed with the restriction of the euclidean metric. Set
$$\q_n(t) := e^{2\pi int},\quad \p_n(t) := ie^{2\pi int}, \quad \forall n\in\N.$$ 

For fixed $n\in\N$, we observe that, for every $t\in \T$, the vectors $\q_n(t)$ and $\p_n(t)$ form an orthonormal basis of $T_{\q_n(t)}\mathbb{R}^2$, and $\p_n(t)  \in T_{\q_n(t)}S^1$.
In particular, $\p_n \in T_{\q_n} H^s(S^1,M)$. For any $\mathbf{w}\in\Gamma(\q_n^*TS^1)$ we have 
\[
\dot{\mathbf w}(t) = \langle \dot{\mathbf w}(t),\p_n(t)\rangle\cdot \p_n(t) + \langle \dot{\mathbf w}(t),\q_n(t)\rangle \cdot \q_n(t) = \nabla_{\dot{\q}_n} {\mathbf w}(t) + \langle \dot{\mathbf w}(t),\q_n(t)\rangle \cdot \q_n(t).
\]
Differentiating the identity $\langle {\mathbf w}(t),\q_n(t)\rangle = 0$ we get 
\[
\langle \dot{\mathbf w}(t),\q_n(t)\rangle = - \langle {\mathbf w}(t),\dot{\q}_n(t)\rangle.
\]
We can now estimate
\begin{align*}
\|\w\|_{1}^2 \leq (\|\w\|_{1}^{\emb})^2 &= \|\w\|^2 + \|\dot{\w}\|^2 = \|\w\|^2 + \|\nabla_{\dot \q_n}\w\|^2 + \| \langle \w(t),\dot{\q}_n(t) \rangle \cdot  \q_n(t)\|^2 \\
& \leq \|\w\|_1^2 + \|\w\|^2 \cdot \|\dot{\q}_n\|^2 \leq (1+(2\pi n)^2)\|\w\|_1^2,
\end{align*}
that is, $\| \cdot\|_1$ and $\|\cdot\|_{1}^\emb$ are equivalent on $\Gamma(u_n^*TS^1)$. By the L\"owner-Heinz theorem, the norms $\|\cdot\|_r$ and 
$\|\cdot\|_r^\emb$ are equivalent on $\Gamma(\q_n^*TS^1)$ for every $r\in [0,1]$. 

On the other hand, we readily see that, for $r\in (0,1]$, there is no constant $c$ independent of $n$ such that 
$\|\cdot\|_{r}^\emb \leq c\|\cdot\|_r$. Indeed, for $\w = \p_n$ we have
$$(1+ \nabla_{\dot \q_n}^*\circ \nabla_{\dot \q_n} )\p_n = \p_n, \quad (1-\Delta)\p_n = \big (1+(2\pi n)^2\big )\p_n,$$
where we used the fact that 
$$\nabla_{\dot \q_n} \p_n (t) = \text{pr}_{T_{\q_n(t)}S^1} \dot \p_n(t) = \text{pr}_{T_{\q_n(t)}S^1} \Big (-(2\pi n)^2 \q_n(t)\Big ) =0,\ \ \forall t\in \T.$$
Therefore,
$$\|\p_n\|_r =\|\p_n\| \equiv 1,\quad \|\p_n\|_{r}^\emb = \big (1+(2\pi n)^2\big )^r \to \infty \ \ \text{as}\ \ n\to +\infty.$$ 
In particular, the two norms are not globally equivalent.

\bibliography{_biblio}
\bibliographystyle{plain}

\end{document}